\DeclareMathOperator{\de}{d}
\definecolor{brightpink}{rgb}{1.0, 0.0, 0.5}
\definecolor{capri}{rgb}{0.0, 0.75, 1.0}
\DeclarePairedDelimiter\abs{\lvert}{\rvert}
\DeclarePairedDelimiter\norm{\lVert}{\rVert}
\let\oldabs\abs
\def\abs{\@ifstar{\oldabs}{\oldabs*}}
\let\oldnorm\norm
\def\norm{\@ifstar{\oldnorm}{\oldnorm*}}
\newtheorem{theorem}{Theorem}[section]
\newtheorem{lemma}[theorem]{Lemma}
\newtheorem{corollary}[theorem]{Corollary}
\newtheorem{definition}[theorem]{Definition}
\newtheorem{example}[theorem]{Example}
\newtheorem{proposition}[theorem]{Proposition}
\newtheorem{remark}[theorem]{Remark}
\newtheorem{main}{Theorem}
\begin{document}
\title{On approximation by random L\"uroth expansions}
\author[Charlene Kalle]{Charlene Kalle}
\address[Charlene Kalle]{Mathematisch Instituut, Leiden University, Niels Bohrweg 1, 2333CA Leiden, The Netherlands}
\email[Charlene Kalle]{kallecccj@math.leidenuniv.nl}
\author[Marta Maggioni]{Marta Maggioni$^\dagger$}
\address[Marta Maggioni]{Mathematisch Instituut, Leiden University, Niels Bohrweg 1, 2333CA Leiden, The Netherlands}
\email[Marta Maggioni]{m.maggioni@math.leidenuniv.nl}

\thanks{$^\dagger$ The second author was supported by the NWO TOP-Grant No.~614.001.509.}
\subjclass[2010]{37A10, 60G10, 11K55, 37H15, 37A44}
\keywords{interval map, random dynamics, L\"uroth-expansions, frequency of digits, Lyapunov exponent}

\begin{abstract}
We introduce a family of random $c$-L\"uroth transformations $\{L_c\}_{c \in [0, \frac12]}$, obtained by randomly combining the standard and alternating L\"uroth maps with probabilities $p$ and $1-p$, $0 < p < 1$, both defined on the interval $[c,1]$. We prove that the pseudo-skew product map $L_c$ produces for each $c \le \frac25$ and for Lebesgue almost all $x \in [c,1]$ uncountably many different generalised L\"uroth expansions that can be investigated simultaneously. Moreover, for $c= \frac1{\ell}$, for $\ell \in \mathbb{N}_{\geq 3} \cup \{\infty\}$, Lebesgue almost all $x$ have uncountably many universal generalised L\"uroth expansions with digits less than or equal to $\ell$. For $c=0$ we show that typically the speed of convergence to an irrational number $x$, of the sequence of L\"uroth approximants generated by $L_0$, is equal to that of the standard L\"uroth approximants; and that the quality of the approximation coefficients depends on $p$ and varies continuously between the values for the alternating and the standard L\"uroth map. Furthermore, we show that for each $c \in \mathbb Q$ the map $L_c$ admits a Markov partition. For specific values of $c>0$, we compute the density of the stationary measure and we use it to study the typical speed of convergence of the approximants and the digit frequencies.
\end{abstract}

\maketitle

\section{Introduction}
In 1883 L\"uroth showed in \cite{Lu} that each $x \in [0,1]$ can be expressed in the form
\begin{equation}\label{d:Lexp}
x = \frac{1}{d_1} + \frac{1}{d_1(d_1-1)d_2} +  \ldots =  \sum_{m \ge 1} (d_m-1) \prod_{j=1}^m \frac1{d_j(d_j-1)},
\end{equation}
where $d_m  \in \mathbb N_{\ge 2} \cup \{ \infty\}$ for each $m$ (and with $\frac1{\infty}=0$). Such expressions are now called {\em L\"uroth expansions}. By considering the numbers
\begin{equation}\label{q:sequence}
\frac{p_n}{q_n} := \sum_{m = 1}^n (d_m-1) \prod_{j=1}^m \frac1{d_j(d_j-1)}, \quad n \ge 1,
\end{equation}
one obtains a sequence of rationals converging to the number $x$, making L\"uroth expansions suitable for finding rational approximations of irrational numbers. Since their introduction in 1883 much research has been done on the approximation properties of L\"uroth expansions from various perspectives. In this article we address these questions by adopting a random dynamical systems approach. It turns out that this yields for each $x$ many different number expansions similar to the L\"uroth expansion from \eqref{d:Lexp}, without compromising the quality of approximation. Before we state our results, we first give a brief summary of a selection of the known results.\\

\vskip .2cm
A L\"uroth expansion is called {\em ultimately periodic} if there exist $n \ge 0$ and $r \ge 1$ such that $d_{n+j} = d_{n+r+j}$ for all $j \ge 1$ (and {\em periodic} if $n=0$). One of the most basic results on L\"uroth expansions, obtained in \cite{Lu}, is on periodicity.

\begin{theorem}{\cite[page 416]{Lu}}\label{t:lurothperiodic}
A real number $x \in (0,1)$ has an ultimately periodic L\"uroth expansion if and only if $x \in \mathbb Q$.
\end{theorem}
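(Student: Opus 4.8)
The plan is to work with the Lüroth map $L$ that generates the expansion \eqref{d:Lexp}. For $d \ge 2$ and $x$ in the cylinder $\left(\frac1d,\frac1{d-1}\right]$ one sets $d_1(x)=d$ and
\[
L(x) = d(d-1)x-(d-1),
\]
so that $L(x)$ has Lüroth digits $d_2,d_3,\dots$ and $d_m(x)=d_1(L^{m-1}x)$. The structural feature I would exploit throughout is that on each cylinder $L$ is an \emph{affine map with integer coefficients}, and hence so is every composition $L^r$ restricted to a cylinder of rank $r$. Moreover, since $x$ is recovered from its digit sequence by the convergent series \eqref{d:Lexp}, two points with the same digits coincide; consequently the digit sequence of $x$ is ultimately periodic if and only if the orbit $(L^m x)_{m\ge 0}$ is eventually periodic. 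This reduces the theorem to a statement about orbits of $L$.

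For the implication $x\in\mathbb Q\Rightarrow$ ultimately periodic, I would write $x=p/q$ in lowest terms with $0<p<q$. If $x\in\left(\frac1d,\frac1{d-1}\right]$ then
\[
L(x) = d(d-1)\tfrac pq-(d-1) = \frac{(d-1)(dp-q)}{q},
\]
so, after cancellation, the reduced denominator of $L(x)$ \emph{divides} $q$. By induction the denominator of $L^m(x)$ divides $q$ for every $m$, so the whole forward orbit lives in the finite set of rationals in $[0,1]$ whose reduced denominator divides $q$. By the pigeonhole principle there are indices $m<n$ with $L^m(x)=L^n(x)$; applying $L^j$ gives $L^{m+j}(x)=L^{n+j}(x)$ for all $j$, so the orbit, and hence the digit sequence, is eventually periodic.

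For the converse I would suppose the expansion is ultimately periodic with pre-period $n$ and period $r$, and set $y=L^n(x)$, whose expansion is purely periodic with repeating block $d_{n+1},\dots,d_{n+r}$. By the digit-uniqueness remark above $L^r(y)=y$. But $L^r$ is affine on the relevant rank-$r$ cylinder, say $L^r(t)=at+b$ with $b\in\mathbb Z$ and $a=\prod_{j=n+1}^{n+r}d_j(d_j-1)\ge 2^r>1$; hence $y=b/(1-a)\in\mathbb Q$. Finally $x$ is obtained from $y$ by applying the $n$ inverse branches $t\mapsto \frac{t+(d_j-1)}{d_j(d_j-1)}$, which are affine with rational coefficients, so $x\in\mathbb Q$. (Alternatively, avoiding uniqueness, one simply sums the eventually periodic series \eqref{d:Lexp} as a finite sum plus a geometric series with ratio $\prod_{j=n+1}^{n+r}\frac1{d_j(d_j-1)}\in(0,1)$, which is manifestly rational.) Finite expansions, corresponding to a digit equal to $\infty$, are rational by inspection and count as ultimately periodic, so they cause no trouble.

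The only genuinely delicate point is the denominator estimate in the forward direction: everything hinges on the observation that applying one branch of $L$ can only divide the reduced denominator, which is what confines the orbit to a finite set and forces periodicity. The remaining steps — identifying digit-periodicity with orbit-periodicity and solving the affine fixed-point equation — are then routine.
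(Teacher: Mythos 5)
Your proof is correct; note that the paper itself only quotes this theorem as a classical result of L\"uroth \cite{Lu} and gives no proof of its own. Your two ingredients --- that applying a branch of the L\"uroth map can only divide the reduced denominator, confining a rational orbit to a finite set so that the pigeonhole principle forces eventual periodicity, and the converse via the integer-affine fixed-point equation (or, equivalently, summing the geometric tail of \eqref{d:Lexp}) --- are precisely the arguments the paper deploys for its random analogues in Lemma~\ref{p:returningdense} and Proposition~\ref{p:iinfinite}, so you have essentially reproduced the paper's approach in the deterministic setting.
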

%In 1883 L\"uroth showed in \cite{Lu} that each $x \in (0,1)$ can be expressed in the form
%\begin{equation}\label{d:Lexp}
%x = \frac{1}{d_1} + \frac{1}{d_1(d_1-1)d_2} +  \ldots =  \sum_{n \ge 1} (d_n-1) \prod_{j=1}^n \frac1{d_j(d_j-1)},
%\end{equation}
%where $d_n  \in \mathbb N_{\ge 2} \cup \{ \infty\}$ for each $n$. Such expressions are now called {\em L\"uroth expansions}.
Many other properties of L\"uroth expansions were obtained using a dynamical system. Indeed, L\"uroth expansions can be obtained dynamically by iterating the {\em L\"uroth transformation} $T_L:[0,1] \to [0,1]$ given by $T_L(0)=0$, $T_L(1)=1$ and
\[T_L(x) = \bigg\lceil  \frac{1}{x} \bigg\rceil \bigg( \bigg\lceil  \frac{1}{x} \bigg\rceil -1 \bigg) x - \bigg( \bigg\lceil \frac{1}{x}  \bigg\rceil-1 \bigg)\]
for $x \neq 0,1$, where $\lceil x \rceil$ denotes the smallest integer not less than $x$. See Figure \ref{f:luroth2}(a) for the graph.\\

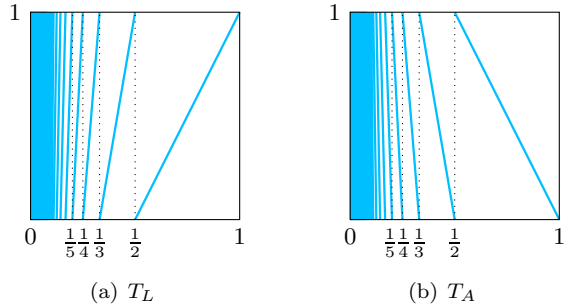
\begin{figure}[h!]
 \centering
 \subfigure[$T_L$]{
 \begin{tikzpicture}[scale=2.75]
 \draw[fill=capri, capri]  (0,0) -- (.077,0) -- (.077,1) -- (0,1) -- cycle;
\draw[line width=0.3mm, capri] (.077,0)--(.083,1)(.083,0)--(.09,1)(.09,0)--(.1,1)(.1,0)--(.11,1)(.11,0)--(.125,1)(.125,0)--(.143,1)(.143,0)--(.167,1)(.167,0)--(.2,1)(.2,0)--(.25,1)(.25,0)--(.33,1)(.33,0)--(.5,1)(.5,0)--(1,1);
\draw(0,0)node[below]{\small $0$}--(.19,0)node[below]{\small $\frac15$}--(.255,0)node[below]{\small $\frac14$}--(.33,0)node[below]{\small $\frac13$}--(.5,0)node[below]{\small $\frac12$}--(1,0)node[below]{\small $1$}--(1,1)--(0,1)node[left]{\small $1$}--(0,0);
\draw[dotted](.2,0)--(.2,1)(.25,0)--(.25,1)(.33,0)--(.33,1)(.5,0)--(.5,1);
\end{tikzpicture}}
\hspace{.5cm}
 \subfigure[$T_A$]{
 \begin{tikzpicture}[scale=2.75]
 \draw[fill=capri, capri]  (0,0) -- (.077,0) -- (.077,1) -- (0,1) -- cycle;
\draw[line width=0.3mm, capri]  (.077,1)--(.083,0)(.083,1)--(.09,0)(.09,1)--(.1,0)(.1,1)--(.11,0)(.11,1)--(.125,0)(.125,1)--(.143,0)(.143,1)--(.167,0)(.167,1)--(.2,0)(.2,1)--(.25,0)(.25,1)--(.33,0)(.33,1)--(.5,0)(.5,1)--(1,0);
\draw(0,0)node[below]{\small $0$}--(.19,0)node[below]{\small $\frac15$}--(.255,0)node[below]{\small $\frac14$}--(.33,0)node[below]{\small $\frac13$}--(.5,0)node[below]{\small $\frac12$}--(1,0)node[below]{\small $1$}--(1,1)--(0,1)node[left]{\small $1$}--(0,0);
\draw[dotted](.2,0)--(.2,1)(.25,0)--(.25,1)(.33,0)--(.33,1)(.5,0)--(.5,1);
\end{tikzpicture}}
\caption{The standard and the alternating L\"uroth maps in (a) and (b), respectively.}
\label{f:luroth2}
\end{figure}

\vskip .2cm
The digits $d_n$, $n \ge 1$, are obtained by setting $d_n (x) = k$ if $T_L^{n-1}(x) \in \big[ \frac1k, \frac1{k-1}\big)$, $k \ge 2$, and $d_n(x)=\infty$ if $T_L^{n-1}(x)=0$. Hence, the map $T_L$ produces for each $x \in [0,1]$ a L\"uroth expansion as in \eqref{d:Lexp}. From the graph of $T_L$ one sees immediately that Lebesgue almost all numbers $x \in [0,1]$ have a unique L\"uroth expansion and if $x$ does not have a unique expansion, then it has exactly two different ones, one with $d_n =d$ and $d_{n+j} = \infty$ and one with $d_n=d+1$ and $d_{n+j}=2$ for some $n,d$ and all $j \ge 1$. This holds for any number $x \in [0,1]$ for which there is an $n\ge1$ such that $T_L^n(x) =0$. By identifying these two expansions we can speak of the unique L\"uroth expansion of any number $x \in [0,1]$.\\

\vskip .2cm
From the dynamics of $T_L$ we get information on the digit frequencies in L\"uroth expansions. The map $T_L$ is measure preserving and ergodic with respect to the Lebesgue measure $\lambda$ on $[0,1]$. It is then a straightforward application of Birkhoff's Ergodic Theorem that, in the L\"uroth expansion of Lebesgue almost every $x$, the frequency of the digit $d$ equals $\frac1{d(d-1)}$, corresponding to the length of the interval $\big[ \frac1d, \frac1{d-1} \big)$. It was proven by \v{S}al\'at in \cite{Sal} that for any $D \ge 2$ the set of points $x \in (0,1)$ for which all L\"uroth expansion digits are bounded by $D$ has Hausdorff dimension $<1$ with the dimension approaching 1 as $D \to \infty$. The articles \cite{Sal,BaBu,BI09,SF11,CWZ13,MT13,SFM17} all consider L\"uroth expansions with certain restrictions on the digits $d_n$.

\vskip .2cm
The quality of approximation by L\"uroth expansions depends on the {\em approximants} or {\em convergents}\index{convergents} $\frac{p_n}{ q_n}$ given in \eqref{q:sequence}. In \cite{BI09} the authors give a multifractal analysis of the speed with which the sequence $\big( \frac{p_n}{ q_n}\big)_n$ converges to the corresponding $x$ using the Lyapunov exponent. The Lyapunov exponent of $T_L$ at $x \in (0,1)$ is defined by
\[ \Lambda_L(x) = \lim_{n \to \infty} \frac1n \log \sum_{k=0}^{n-1} |T_L' (T_L^k (x))|,\]
whenever this limit exists. It follows from another application of Birkhoff's Ergodic Theorem that for $\lambda$-a.e.~$x \in (0,1)$,
\[ \Lambda(x) = \sum_{d=2}^\infty \frac{\log(d(d-1))}{d(d-1)}.\]
The authors of \cite{BI09} obtain, among other things, the following result.
\begin{theorem}[\cite{BI09}]\label{t:bi}
For $\lambda$-a.e.~$x \in (0,1)$,
\[ \lim_{n \to \infty} \frac1n \log \Big| x - \frac{p_n}{q_n} \Big| = - \sum_{d=2}^\infty \frac{\log(d(d-1))}{d(d-1)}.\]
Moreover, the range of possible values of this rate is $(-\infty, -\log 2]$.% the Hausdorff dimension of the set of points $x \in (0,1)$ that have $\Lambda_L(x)=\gamma$ is given by
%\[ \frac1{\gamma} \int_{t \in \mathbb R} \Big( P(-t\log |T_L'|) + t\gamma \big).\]
\end{theorem}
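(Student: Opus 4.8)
The plan is to reduce everything to a single exact formula for the approximation error and then combine the ergodic theory of $T_L$ with a Borel--Cantelli estimate. First I would establish the identity
\[
x - \frac{p_n}{q_n} = \Big(\prod_{j=1}^n \frac{1}{d_j(d_j-1)}\Big)\, T_L^n(x),
\]
which follows directly from \eqref{d:Lexp} and \eqref{q:sequence}: the tail $x - \frac{p_n}{q_n} = \sum_{m>n}(d_m-1)\prod_{j=1}^m\frac{1}{d_j(d_j-1)}$ factors as $\prod_{j=1}^n\frac{1}{d_j(d_j-1)}$ times the L\"uroth expansion of $T_L^n(x)$, because iterating $T_L$ shifts the digit sequence. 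Since $|T_L'| = d(d-1)$ on each branch $[\frac1d,\frac1{d-1})$, this is exactly the cylinder-length formula $|x - p_n/q_n| = |(T_L^n)'(x)|^{-1}\, T_L^n(x)$. Taking logarithms and dividing by $n$ gives, for irrational $x$ (so that $T_L^n(x)\in(0,1)$),
\[
\frac1n \log\Big|x - \frac{p_n}{q_n}\Big| = -\frac1n\sum_{j=1}^n \log\big(d_j(d_j-1)\big) + \frac1n\log T_L^n(x).
\]

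For the almost-everywhere statement I would treat the two terms separately. The first is a Birkhoff average $\frac1n\sum_{k=0}^{n-1} g(T_L^k x)$ of $g(x)=\log(d_1(x)(d_1(x)-1))$, so I first check $g\in L^1(\lambda)$ --- since $\lambda([\tfrac1d,\tfrac1{d-1}))=\frac1{d(d-1)}$ one has $\int g\,d\lambda = \sum_{d\ge2}\frac{\log(d(d-1))}{d(d-1)}<\infty$ --- and then invoke the ergodicity of $T_L$ with respect to $\lambda$ to conclude that this term converges a.e.\ to $\Lambda=\sum_{d\ge2}\frac{\log(d(d-1))}{d(d-1)}$. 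For the second term I would use that $\lambda$ is $T_L$-invariant together with $\int_0^1|\log x|\,d\lambda = 1<\infty$: for each $\varepsilon>0$ measure preservation gives $\lambda(\{x: T_L^n(x)<e^{-\varepsilon n}\}) = e^{-\varepsilon n}$, which is summable, so Borel--Cantelli yields $T_L^n(x)\ge e^{-\varepsilon n}$ eventually for a.e.\ $x$; since also $T_L^n(x)<1$, letting $\varepsilon\downarrow 0$ along a countable sequence shows $\frac1n\log T_L^n(x)\to0$ a.e. Combining the two terms gives the claimed limit $-\Lambda$.

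For the range I would note that the displayed decomposition is bounded above, term by term, by $-\log 2$: indeed $\log(d_j(d_j-1))\ge\log 2$ for every $d_j\ge2$, while $\log T_L^n(x)\le0$, so the rate, whenever it exists, lies in $(-\infty,-\log 2]$. Conversely, to realise an arbitrary $\alpha\le-\log 2$ I would build an $x$ with a prescribed digit sequence: writing $L=-\alpha\ge\log2$, I would mix the digit $2$ (contributing $\log2$) with a single larger digit $D$ chosen so that $\log(D(D-1))\ge L$, in a pattern of asymptotic density $\rho=\frac{L-\log2}{\log(D(D-1))-\log2}$ so that the Ces\`aro average of $\log(d_j(d_j-1))$ converges to $L$. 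Because the digits stay bounded, $T_L^n(x)\ge 1/d_{n+1}$ stays bounded below, the second term again tends to $0$, and the rate equals $-L=\alpha$; the boundary value $\alpha=-\log2$ needs extra care, since all digits equal to $2$ give $x=1$, so instead I would place the larger digit on a set of density zero (e.g.\ at indices $2^k$), forcing the average to $\log2$ while keeping the expansion non-eventually-periodic and hence $x$ irrational by Theorem~\ref{t:lurothperiodic}.

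The routine parts are the algebra of the tail sum and the integrability checks. The main obstacle is the control of $\frac1n\log T_L^n(x)$: it is \emph{not} itself a Birkhoff average, and the Borel--Cantelli argument above is precisely what prevents the approximants from being occasionally anomalously good; the other delicate point is the boundary case $\alpha=-\log2$, where the naive extremiser escapes to the rational endpoint $x=1$ and must be replaced by a density-zero perturbation.
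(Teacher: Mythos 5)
Your proposal is correct. Note first that the paper does not prove this statement itself --- it is imported from \cite{BI09} --- but the paper does prove the random analogue (Proposition~\ref{p:le}), so that is the natural point of comparison. There the authors avoid your exact identity $x-\frac{p_n}{q_n}=\bigl(\prod_{j\le n}\frac{1}{d_j(d_j-1)}\bigr)T_L^n(x)$ and the attendant need to control $\frac1n\log T_L^n(x)$; instead they sandwich the error deterministically,
\[
\prod_{k=1}^{n+1}\frac{1}{d_k(d_k-1)}\;\le\;\Bigl|x-\frac{p_n}{q_n}\Bigr|\;\le\;\prod_{k=1}^{n}\frac{1}{d_k(d_k-1)},
\]
the lower bound coming from the pointwise inequality $T_L^n(x)\ge \frac1{d_{n+1}}\ge\frac1{d_{n+1}(d_{n+1}-1)}$, and then both sides have the same exponential rate by a single application of Birkhoff's theorem (the extra $(n+1)$-st factor washes out in the Ces\`aro limit). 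Your route --- measure preservation giving $\lambda(\{T_L^n(x)<e^{-\varepsilon n}\})=e^{-\varepsilon n}$ and Borel--Cantelli to force $\frac1n\log T_L^n(x)\to 0$ a.e.\ --- is a perfectly valid substitute, and the exact identity is more informative than the two-sided bound; the cost is that your remainder estimate is only almost-everywhere, whereas the paper's lower bound holds pointwise for every $x$ with $T_L^n(x)\neq 0$, which is exactly what one wants for the range statement. Tellingly, in your treatment of the range you fall back on the bounded-digit lower bound $T_L^n(x)\ge 1/d_{n+1}$ anyway, i.e.\ you rediscover the paper's mechanism where Borel--Cantelli is unavailable. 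Your handling of the endpoint $-\log 2$ via a density-zero insertion of a large digit (keeping $x$ irrational and avoiding the degenerate all-twos sequence, which sums to $1$) is a correct and necessary refinement.
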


%From \eqref{d:Lexp} one sees that the size of the digits $d_n$ influences the distance from $\frac{p_n}{q_n}$ to $x$. It is another straightforward application of the Birkhoff Ergodic Theorem that the digit $d\ge 2$ occurs with frequency $\frac1{d(d-1)}$ in the L\"uroth expansion of a typical point $x$. 

\vskip .2cm
Another way to express the quality of the approximations is via the limiting behaviour of the {\em approximation coefficients}\index{approximation coefficients}
\begin{equation}\label{q:appcf}
\theta_n^L(x) := q_n \Big| x - \frac{p_n}{q_n}\Big|,
\end{equation}
where $q_n = d_n \prod_{i=1}^{n-1} d_i(d_i-1)$. In \cite{DK96} the authors prove the following result.
\begin{theorem}[Theorem 2, \cite{DK96}]
For $\lambda$-a.e.~$x\in [0,1]$ and for every $z \in (0,1]$ the limit
\[ \lim_{N \to \infty} \frac{\# \{ 1 \le j \le N \, : \, \theta_j^L(x) < z\}}{N}\]
exists and equals
\begin{equation}\label{q:fl}
F_L(z) := \sum_{k=2}^{\lfloor \frac1z \rfloor +1} \frac{z}{k} + \frac1{\lfloor \frac1z \rfloor +1}.
\end{equation}
\end{theorem}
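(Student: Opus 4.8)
The plan is to reduce the entire statement to a single application of Birkhoff's Ergodic Theorem, by first rewriting the approximation coefficient $\theta_n^L(x)$ as a function of one iterate $T_L^{n-1}(x)$. First I would record the one-step identity $x = \frac1{d_1} + \frac{1}{d_1(d_1-1)}T_L(x)$, which is immediate from the definition of $T_L$ on the branch $\big[\frac1{d_1},\frac1{d_1-1}\big)$. Iterating it gives
\[ x - \frac{p_n}{q_n} = \Big( \prod_{j=1}^n \frac{1}{d_j(d_j-1)}\Big)\, T_L^n(x).\]
Combining this with $q_n = d_n \prod_{i=1}^{n-1} d_i(d_i-1)$ and cancelling yields $\theta_n^L(x) = \frac{T_L^n(x)}{d_n-1}$. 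Writing $y = T_L^{n-1}(x)$, so that $d_n = \lceil 1/y\rceil$ and $T_L^n(x) = T_L(y)$, a short computation on the relevant branch collapses this to
\[ \theta_n^L(x) = \lceil 1/y\rceil\, y - 1 = g\big(T_L^{n-1}(x)\big), \qquad g(y) := \lceil 1/y\rceil\, y - 1.\]
The decisive point is that $\theta_n^L(x)$ depends only on the single iterate $T_L^{n-1}(x)$, which is exactly what makes the ergodic theorem applicable.

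With this in hand, for each fixed $z \in (0,1]$ the counting average $\frac1N\#\{1\le j\le N : \theta_j^L(x) < z\}$ equals the Birkhoff average of the bounded function $\mathbbm 1_{\{g<z\}}$ along the $T_L$-orbit of $x$ (up to an index shift that does not affect the limit). Since $T_L$ is ergodic with respect to $\lambda$, Birkhoff's Ergodic Theorem gives, for $\lambda$-a.e.\ $x$, convergence of this average to $\lambda(\{y : g(y) < z\})$. It then remains to evaluate this measure. On the branch $\big[\frac1d,\frac1{d-1}\big)$ the map $g$ is the increasing affine bijection $y \mapsto dy-1$ onto $\big[0,\frac1{d-1}\big)$, so $\{g<z\}$ meets this branch in $\big[\frac1d, \min\{\frac{1+z}{d},\frac1{d-1}\}\big)$, of length $\min\{\frac{z}{d},\frac1{d(d-1)}\}$. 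The threshold between the two alternatives is $z \le \frac1{d-1}$, i.e.\ $d \le \lfloor 1/z\rfloor + 1 =: M$, so summing over the branches gives
\[ \lambda(\{y : g(y)<z\}) = \sum_{d=2}^{M}\frac{z}{d} + \sum_{d=M+1}^{\infty}\frac1{d(d-1)} = \sum_{d=2}^{M}\frac{z}{d} + \frac1M,\]
the last tail telescoping; this is precisely $F_L(z)$.

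Finally I would upgrade the almost-everywhere statement to hold simultaneously for all $z \in (0,1]$ on one full-measure set. The average $\frac1N\#\{1\le j\le N : \theta_j^L(x)<z\}$ is nondecreasing in $z$, and one checks that $F_L$ is continuous on $(0,1]$: it is affine in $z$ on each interval $\big(\frac1{n+1},\frac1n\big]$, and a direct computation at the breakpoints $z=\frac1n$ shows the one-sided limits agree (the jump $\frac1{M}$ picks up exactly compensates the change in the finite sum). Fixing the countable intersection of full-measure sets on which the convergence holds for every rational $z$, I can then sandwich an arbitrary $z$ between rationals and pass to the limit using monotonicity together with the continuity of $F_L$, in the manner of the Glivenko--Cantelli argument.

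I expect the genuinely delicate steps to be the derivation of the closed form $\theta_n^L(x) = g(T_L^{n-1}(x))$ in the first paragraph, since everything downstream hinges on $\theta_n^L$ being a function of a single iterate, and the bookkeeping at the breakpoints $z=\frac1n$ that is needed for the simultaneous-in-$z$ conclusion. The measure computation itself is routine once the branch structure of $g$ is identified.
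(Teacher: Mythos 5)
Your argument is correct. The paper does not prove this statement itself --- it is quoted from \cite{DK96} --- but your route (reducing $\theta_n^L$ to the single-iterate function $g(T_L^{n-1}(x))$ with $g(y)=\lceil 1/y\rceil y-1$, applying Birkhoff's Ergodic Theorem to $\mathbbm{1}_{\{g<z\}}$, and evaluating $\lambda(\{g<z\})$ branch by branch with the telescoping tail $\sum_{d>M}\frac{1}{d(d-1)}=\frac1M$) is essentially the same computation the paper carries out in the proof of Theorem~\ref{p:ethetan} for the random analogue, specialised to the $T_L$ branch; your Glivenko--Cantelli upgrade to all $z$ simultaneously via monotonicity and the continuity of $F_L$ is also sound.
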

We refer to e.g.~\cite{SYZ14,Van14,Giu16,GL16,ZC16,She17,LCTW18,SX18,TW18} for results on other properties of L\"uroth expansions.\\

\vskip .2cm
In \cite{BaBu} the authors placed the map $T_L$ in the larger framework of {\em generalised L\"uroth series transformations} (GLS). A GLS transformation is a piecewise affine onto map $T_{\mathcal P,\varepsilon}:[0,1]\to [0,1]$ given by an at most countable interval partition $\mathcal P$ of $[0,1]$ and a vector $\varepsilon = (\varepsilon_n)_n \in \{ 0,1\}^{\# \mathcal P}$ specifying for each partition element the orientation of $T_{\mathcal P, \varepsilon}$ on that interval. The L\"uroth transformation can be obtained by taking the partition $ \mathcal P_L = \big\{ \big[ \frac1{n}, \frac{1}{n-1} \big) \big\}_{n \ge 2}$ and orientation vector $\varepsilon = (0)_{n \ge 1}$, i.e., all branches are orientation preserving. In \cite{BaBu} the authors considered all GLS transformations $T_{\mathcal P_L, \varepsilon}$ with partition $\mathcal P_L$. Besides the L\"uroth transformation, another specific instance of this family is the {\em alternating L\"uroth map} $T_A: [0,1]\to [0,1]$ given by $T_A(x) = 1-T_L(x)$, see Figure~\ref{f:luroth2}(b), which has $\varepsilon = (1)_{n \ge 1}$, so that all branches orientation reversing. Similar to the L\"uroth expansion from \eqref{d:Lexp}, iterations of any GLS transformation $T_{\mathcal P_L, \varepsilon}$ yield number expansions for $x\in [0,1]$ of the form
\begin{equation}\label{d:glse}
x=\sum_{n=1}^{\infty} (-1)^{\sum_{i=1}^{n-1} s_i} \frac{d_n-1 + s_n}{\prod_{i=1}^{n} d_i(d_i-1)},
\end{equation}
where $s_n \in \{0,1\}$ and $d_n \ge 2$, called {\em generalised L\"uroth expansions}. Here we let $\sum_{i=1}^0 s_i=0$. For each map $T_{\mathcal P_L,\varepsilon}$ the authors of \cite{BaBu} consider the approximation coefficients $\theta_n^{\mathcal P_L,\varepsilon}$ and the corresponding distribution function $F_{\mathcal P_L,\varepsilon}$ and they find the following.

\begin{theorem}[Theorem 4, \cite{BaBu}]\label{t:fafl}
The distribution function of $\theta_n^A$ for the map $T_A$ is given for $0 < z \leq 1$ by
\[ F_A(z) = \sum_{k=2}^{\lfloor \frac1z \rfloor} \frac{z}{k-1} + \frac1{\lfloor \frac1z \rfloor}.\]
For any GLS transformation $T_{\mathcal{P}_L,\varepsilon}$ it holds that
\[F_A \leq F_{\mathcal P_L,\varepsilon} \leq  F_L,\]
Furthermore, the first moments of $F_L$ and $F_A$ are given respectively by 
\[M_L := \int_{[0,1]} 1-F_L \, d\lambda = \frac{ \zeta(2)}{2}- \frac12 \quad \text{and} \quad M_A := \int_{[0,1]} 1-F_A \, d\lambda = 1- \frac{ \zeta(2)}{2},\]
where $\zeta(2)$ is the zeta function evaluated at $2$.
\end{theorem}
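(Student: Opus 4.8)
The plan is to reduce all four assertions to a single Birkhoff average by showing that the $n$-th approximation coefficient of any GLS map is one fixed function evaluated at one orbit point. Fix an orientation vector $\varepsilon=(\varepsilon_k)_{k\ge2}$, write $s_n=\varepsilon_{d_n}$, put $Q_n=\prod_{i=1}^n d_i(d_i-1)$, and let $\Delta_n(x)$ be the rank-$n$ fundamental interval (the cylinder on which the first $n$ digits and orientations coincide with those of $x$). Since every branch of $T_{\mathcal P_L,\varepsilon}$ is affine and onto, $\abs{\Delta_n(x)}=1/Q_n$, and factoring the common product out of the tail of \eqref{d:glse} gives
\[ x-\frac{p_n}{q_n}=(-1)^{\sum_{i=1}^n s_i}\,T_{\mathcal P_L,\varepsilon}^n(x)\,\abs{\Delta_n(x)}, \]
so $p_n/q_n$ is the endpoint of $\Delta_n(x)$ on the correct side of $x$. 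The delicate point is the bookkeeping of the denominator: extending \eqref{q:appcf}, the natural denominator is $q_n=Q_n/(d_n-1+s_n)$, which for $s_n\equiv0$ equals $d_n\prod_{i=1}^{n-1}d_i(d_i-1)$ and for $s_n\equiv1$ equals $(d_n-1)\prod_{i=1}^{n-1}d_i(d_i-1)$. Together these produce the clean identity
\[ \theta_n^{\mathcal P_L,\varepsilon}(x)=q_n\,\abs{x-\frac{p_n}{q_n}}=\frac{T_{\mathcal P_L,\varepsilon}^n(x)}{d_n-1+s_n}. \]

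The crucial structural observation is that $d_n$, $s_n$ and $T^n x$ are all determined by $T^{n-1}x$, so $\theta_n=g(T^{n-1}x)$ for the single function $g(y)=T_{\mathcal P_L,\varepsilon}(y)/(d(y)-1+\varepsilon_{d(y)})$, where $d(y)$ indexes the partition element containing $y$. Consequently
\[ \frac1N\,\#\{1\le j\le N:\theta_j<z\}=\frac1N\sum_{j=0}^{N-1}\mathbbm{1}_{\{g<z\}}\big(T_{\mathcal P_L,\varepsilon}^{\,j}(x)\big). \]
Because each $T_{\mathcal P_L,\varepsilon}$ preserves $\lambda$ and is ergodic (indeed exact, by the standard GLS theory), Birkhoff's Ergodic Theorem yields, for $\lambda$-a.e.\ $x$, convergence to $F_{\mathcal P_L,\varepsilon}(z):=\lambda(\{g<z\})$. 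I would evaluate this measure cylinder-by-cylinder: on $[\tfrac1d,\tfrac1{d-1})$ the map is affine onto $[0,1)$, so the portion of the cylinder with $g<z$ has measure $\frac1{d(d-1)}\min\{1,(d-1+\varepsilon_d)z\}$, whence
\[ F_{\mathcal P_L,\varepsilon}(z)=\sum_{d\ge2}\frac1{d(d-1)}\,\min\{1,(d-1+\varepsilon_d)z\}. \]
For $\varepsilon\equiv1$ one has $d-1+\varepsilon_d=d$; splitting the sum at $d=\lfloor\frac1z\rfloor$ and telescoping $\sum_{d>\lfloor\frac1z\rfloor}\frac1{d(d-1)}=1/\lfloor\frac1z\rfloor$ reproduces the stated $F_A$, while $\varepsilon\equiv0$ recovers $F_L$ of \eqref{q:fl} in the same way.

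The comparison and the moments both read off this displayed series. Each summand $\frac1{d(d-1)}\min\{1,(d-1+\varepsilon_d)z\}$ is monotone in $\varepsilon_d\in\{0,1\}$, so the two constant orientation vectors $\varepsilon\equiv0$ and $\varepsilon\equiv1$ — that is, the standard and the alternating L\"uroth maps — give the two extreme distribution functions, and $F_{\mathcal P_L,\varepsilon}$ lies between them for every $z$. For the first moments, $\int_{[0,1]}(1-F)\,d\lambda=\int\theta_1\,d\lambda$ (valid since $0\le\theta_1\le1$); evaluating cylinder-by-cylinder, each cylinder contributes $\frac1{2d(d-1)(d-1+\varepsilon_d)}$, so the moment equals $\frac12\sum_{d\ge2}\frac1{d(d-1)(d-1+\varepsilon_d)}$. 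A partial-fraction decomposition, the telescoping of the rational parts, and the identity $\sum_{k\ge1}k^{-2}=\zeta(2)$ then give $\frac12(\zeta(2)-1)=\frac{\zeta(2)}2-\frac12$ for $\varepsilon\equiv0$ and $\frac12(2-\zeta(2))=1-\frac{\zeta(2)}2$ for $\varepsilon\equiv1$, as claimed.

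The one step that demands genuine care — and the only place where the standard and alternating cases truly diverge — is the determination of $q_n$ in the first paragraph: different orientations of the last branch place $p_n/q_n$ at different endpoints of $\Delta_n(x)$, and the factor $d_n-1+s_n$ must be extracted correctly and matched to the normalisation in \eqref{q:appcf}. Once the identity $\theta_n=T^n x/(d_n-1+s_n)$ is in hand, the remaining steps are a one-coordinate ergodic average and elementary series manipulations.
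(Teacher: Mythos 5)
This statement is imported verbatim from \cite{BaBu} (Theorem 4 there) and the paper gives no proof of it, so there is no internal argument to compare yours against; the closest analogue in the paper is the proof of Theorem~\ref{p:ethetan}, which rests on exactly your key identity $\theta_n = T^n(x)/(d_n-1+s_n)$ (the paper's \eqref{e:thetan}) plus equidistribution of the orbit under the invariant measure. Your argument is correct and complete: the bookkeeping $q_n = Q_n/(d_n-1+s_n)$ matches \eqref{q:appcf} in both the $s\equiv 0$ and $s\equiv 1$ cases, the reduction of $\theta_n$ to the single observable $g(y)=T_{\mathcal P_L,\varepsilon}(y)/(d(y)-1+\varepsilon_{d(y)})$ is right, and the cylinder-by-cylinder evaluation $F_{\mathcal P_L,\varepsilon}(z)=\sum_{d\ge 2}\frac{1}{d(d-1)}\min\{1,(d-1+\varepsilon_d)z\}$ reproduces \eqref{q:fl}, the stated $F_A$, and both first moments.

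One point you should not leave vague: your monotonicity observation shows each summand is nondecreasing in $\varepsilon_d$, hence $F_L \le F_{\mathcal P_L,\varepsilon} \le F_A$ pointwise, which is the \emph{reverse} of the displayed inequality in the theorem as printed. The printed inequality is a typo: it contradicts the moment values in the same statement (since $M_A = 1-\tfrac{\zeta(2)}{2} < \tfrac{\zeta(2)}{2}-\tfrac12 = M_L$ forces $\int F_A > \int F_L$), contradicts the surrounding text asserting that $T_A$ has the best approximation properties, and fails numerically ($\theta_n^A = x_n/d_n \le \tfrac12$ gives $F_A(z)=1$ for $z\ge\tfrac12$, while $F_L(\tfrac12)=\tfrac34$). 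So commit to the direction your computation actually yields; it is the correct one.
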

The authors of \cite{BaBu} remark that they suspect that the set of values that the limit $ \lim_{n \to \infty} \frac1n \sum_{i=1}^n \theta_i^{\mathcal P_L, \varepsilon}$ can take is a fractal set inside the interval $[M_A, M_L]$. Other results on the map $T_A$ can be found e.g.~in \cite{KaKn1,KaKn2}. For results on different families of GLS transformations, see e.g.~\cite{KMS11,Mun11,CWY14,CW14}.\\

\vskip .2cm
In this article we adopt a random approach to L\"uroth expansions. We introduce a family of random L\"uroth systems $\{L_{c,p}\}_{c \in [0, \frac12], 0 \leq p \leq 1}$ that are obtained from randomly combining the maps $T_L$ and $T_A$. The parameter $c$ is the cutting point, that defines the interval $[c,1]$ on which each $L_{c,p}$ is defined. More precisely, we overlap $T_L$ and $T_A$ on the interval $[c,1]$ and remove from both maps the pieces that map points into $[0,c)$. The parameter $p$ reflects the probability with which we apply the map $T_L$. To be precise, let $T_0:= T_L$ and $T_1 := T_A$ and let $\sigma$ denote the left shift on sequences. Then the {\em random $c$-L\"uroth transformation} $L_{c,p}: \{0,1\}^\mathbb N \times [c,1] \to \{0,1\}^\mathbb N \times [c,1]$ is defined by
\[ L_{c,p} (\omega,x) = \big( \sigma(\omega), T_{\omega_1}(x) 1_{[c,1]} (T_{\omega_1}(x)) + T_{1-\omega_1} (x) 1_{[0,c)}(T_{\omega_1}(x)) \big).\]
By iteration, each map $L_{c,p}$ produces for each pair $(\omega,x)$ a generalised L\"uroth expansion for $x$ as in \eqref{d:glse}. So for typical $x \in [c,1]$ multiple generalised L\"uroth expansions of the form \eqref{d:glse} are obtained. If $c=0$ the corresponding random L\"uroth expansions have digits in the set $\mathbb N_{\ge 2} \cup \{ \infty\}$, but for $c>0$ the available set of digits is bounded from above. This makes the two cases inherently different. We summarise our main results in the following three theorems.

\begin{main}\label{t:main1} Let $c \in \big[0, \frac12 \big]$.
\begin{itemize}
\item[(i)] If $x \in [c,1] \setminus \mathbb Q$, then no generalised L\"uroth expansion of $x$ produced by $L_{c,p}$ is ultimately periodic.
\item[(ii)] If $x \in [c,1] \cap \mathbb Q$ then, depending on the values of $x$ and $c$, the map $L_{c,p}$ can produce any of the following number of different generalised L\"uroth expansions:
\begin{itemize}
\item a unique expansion,
\item a finite or countable number of ultimately periodic expansions,
\item countably many ultimately periodic expansions and uncountably many expansions that are not ultimately periodic.
\end{itemize}
\end{itemize}
\end{main}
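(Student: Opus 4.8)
The plan is to prove (i) and (ii) from a single structural observation: an ultimately periodic generalised L\"uroth expansion always represents a rational number, while for rational $x$ the dynamics of $L_{c,p}$ is captured by a finite labelled digraph whose infinite paths are precisely the expansions of $x$.

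For (i) I would argue by contraposition and show that every $x$ admitting an ultimately periodic expansion \eqref{d:glse} is rational. Suppose the digit sequence $(d_n,s_n)_n$ is ultimately periodic; possibly after doubling the period to absorb the sign factor $(-1)^{\sum_{i<n}s_i}$, write the series as a finite pre-periodic sum plus a purely periodic tail. Each inverse branch of $T_L$ and $T_A$ is affine with rational coefficients and slope of modulus $\frac1{d_i(d_i-1)}\le\frac12$, so the tail is the fixed point of the composition $\Phi$ of these inverse branches over one period. Since $\Phi$ is an affine contraction with rational coefficients, its fixed point is rational, and feeding this value back through the finitely many rational affine pre-periodic branches shows $x\in\mathbb Q$. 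This is the easy direction of the analogue of Theorem~\ref{t:lurothperiodic}, and it gives (i) at once.

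For (ii) the decisive point is that $T_L$ and $T_A$ preserve denominators: if $y=\frac jq$ lies on the branch indexed by $k$ then $T_L(y)=\frac{k(k-1)j-(k-1)q}{q}$ and $T_A(y)=1-T_L(y)$ again have denominator dividing $q$. Hence, starting from $x=\frac pq$, every point reachable under every realisation of $L_{c,p}$ lies in the finite set $\frac1q\mathbb Z\cap[c,1]$, so the finite-state description needed here does not even require rationality of $c$. I would encode this by a finite directed graph $G$ whose vertices are the reachable rationals and whose edges $y\to z$, labelled by the digit $(d,s)$, record the admissible branches, i.e.\ those keeping the orbit in $[c,1]$. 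A vertex is \emph{forced} (out-degree one) or \emph{free} (out-degree two), the latter precisely when $c\le T_L(y)\le 1-c$, which is possible because $c\le\frac12$. Since a vertex together with a digit determines its successor, the infinite paths from $x$ are in bijection with the digit sequences producible from $x$, that is, with the generalised L\"uroth expansions of $x$; under this bijection an expansion is ultimately periodic if and only if the corresponding edge-label sequence is.

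The trichotomy of (ii) then becomes a counting statement for infinite paths in $G$. Over the finite digit alphabet there are only countably many ultimately periodic label sequences, so the number of ultimately periodic expansions of $x$ is always at most countable. The total number of infinite paths from $x$ is finite, countably infinite, or $2^{\aleph_0}$: it is uncountable exactly when $x$ reaches a free vertex $w$ both of whose branches can be continued so as to return to free vertices, which by finiteness forces such branching to recur infinitely often; then only countably many of the uncountably many expansions are ultimately periodic, giving the third bullet. When no such two-sided recurrent branching is reachable, every path eventually enters a part of $G$ carrying no free choice, its labels are ultimately forced, and the expansion is ultimately periodic, giving the first two bullets. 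Finally I would realise each case explicitly: for $c\in(\frac13,\frac12]$ the point $x=\frac23$ is forced to the self-loop $T_A(\frac23)=\frac23$ and has a unique expansion; for every $c\in(0,\frac12]$ the point $x=\frac34$ has exactly two expansions, both ultimately periodic, since both branches send it to $\frac12$ and thereafter the orbit is forced to the fixed point $1$; and for $c\le\frac25$ the point $x=\frac25$ is a free vertex lying on the cycle $\frac25\to\frac35\to\frac45\to\frac25$ while also carrying the self-loop $T_L(\frac25)=\frac25$, so both return routes to the free vertex $\frac25$ are available and $x$ has $2^{\aleph_0}$ expansions. The main obstacle is the labelled-graph analysis underlying the trichotomy, in particular separating the countably infinite case from the uncountable one and checking that in the non-uncountable case every expansion is genuinely ultimately periodic; the subtlety is that ultimate periodicity is a property of the \emph{edge-label} sequence rather than of the vertex sequence, so free vertices sitting on cycles must be handled with care.
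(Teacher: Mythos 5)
Your overall strategy is essentially the paper's: part (i) is Proposition~\ref{p:iinfinite} (the paper runs the orbit equation $T^{n}_{\omega,c}(x)=T^{n+r}_{\omega,c}(x)$, i.e.\ $ax+b=cx+d$ with integer coefficients, where you instead take the fixed point of the rational affine contraction given by one period of inverse branches --- both are fine), and for (ii) your finite labelled digraph on $\tfrac1q\mathbb Z\cap[c,1]$ is a repackaging of Lemma~\ref{p:returningdense} together with the paper's notion of \emph{loops} (first-return words) and the equivalence relation \eqref{q:sameLdigits}. Labelling edges by the digit pair $(s,d)$ rather than by $\omega$ is in fact a clean way to absorb that equivalence relation, since outside the switch region the choice of $\omega$ is immaterial; and your three witnesses ($\tfrac23$ for $c>\tfrac13$, $\tfrac34$, and $\tfrac25$) are correct, the middle one being exactly the paper's Example~\ref{e:rationalexp}.

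The one genuine gap is your criterion for the uncountable case. ``$x$ reaches a free vertex $w$ both of whose branches can be continued so as to return to free vertices'' is not sufficient, even granting that branching then recurs infinitely often: consider a graph with free vertices $v_1,v_2$ where at $v_1$ one branch loops back to $v_1$ and the other leads to $v_2$, while at $v_2$ one branch loops back to $v_2$ and the other falls into a forced cycle never meeting a free vertex again. Both branches at $v_1$ reach free vertices, and along the path that always loops at $v_1$ the branching recurs forever, yet the set of infinite label sequences from $v_1$ is countable (a path is determined by how long it loops at $v_1$ and then at $v_2$) and every expansion is ultimately periodic. The correct dichotomy --- the content of Proposition~\ref{p:loop} --- is whether some reachable vertex $y$ admits two \emph{distinct first-return label sequences} $\mathbf u_1\nsim\mathbf u_2$. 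If so, the concatenations $\mathbf u_1^{\ell_1}\mathbf u_2^{\ell_2}\cdots$ give $2^{\aleph_0}$ expansions, of which exactly the ultimately periodic choices of $(\ell_j)$ (countably many) yield ultimately periodic expansions. If not, then for any expansion some orbit point $y$ recurs infinitely often, consecutive return blocks at $y$ all carry the same digit string, and the expansion is ultimately periodic; over the countable digit alphabet this leaves at most countably many. You identified this separation as the main obstacle yourself; the two-inequivalent-loops-at-one-vertex formulation is the missing ingredient that makes it go through, and your $x=\tfrac25$ example (self-loop under $T_L$ versus the $3$-cycle under $T_A$) does satisfy it.
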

We also give a characterisation on when each of these cases occurs. This result does not depend on the value of $p$. The following results further explore the properties of the produced generalised L\"uroth expansions in case $c=0$ and $c>0$. We call a generalised L\"uroth expansion generated by a map $L_{c,p}$ {\em universal} if any possible block of digits of any length from the alphabet associated to $L_{c,p}$ occurs in the expansion.

\begin{main}\label{t:main2}
Let $c=0$ and $0 < p < 1$.
\begin{itemize}
\item[(i)] The map $L_{0,p}$ generates for Lebesgue almost every $x \in [0,1]$ uncountably many universal generalised L\"uroth expansions.
\item[(ii)] The speed of convergence of the sequence $\big(\frac{p_n}{q_n}\big)_n$ to $x$ for any generalised L\"uroth expansion produced by $L_{0,p}$ typically satisfies
\[ \lim_{n \to \infty} \frac1n \log \Big| x - \frac{p_n}{q_n} \Big| = - \sum_{d=2}^\infty \frac{\log(d(d-1))}{d(d-1)}\]
and the range of possible values of this rate is $(-\infty, -\log 2]$. In particular, this rate does not depend on $p$. 
\item[(iii)] Typically the approximation coefficients generated by $L_{0,p}$ satisfy
\[ \lim_{n \to \infty} \frac1n \sum_{i=1}^n \theta_i^{0,p} = p \frac{2\zeta(2)-3}{2} + \frac{2-\zeta(2)}{2},\]
where $\zeta(2)$ is the zeta function evaluated at 2. In particular, this limit can attain any value in the interval $[M_A, M_L]$.
\end{itemize}
\end{main}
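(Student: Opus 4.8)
The plan rests on the observation that for $c=0$ the set $[0,c)$ is empty, so $L_{0,p}$ collapses to the genuine skew product $L_{0,p}(\omega,x)=(\sigma\omega,T_{\omega_1}(x))$, and that both $T_L$ and $T_A$ are full-branch piecewise affine maps which preserve Lebesgue measure $\lambda$ and share the partition $\mathcal P_L$. Hence $m:=\mathbb P_p\times\lambda$, with $\mathbb P_p$ the $(p,1-p)$-Bernoulli measure on $\{0,1\}^{\mathbb N}$, is $L_{0,p}$-invariant, and the first and pivotal step is to prove it ergodic. I would establish this through the coding $\Phi(\omega,x)=\big((\omega_n,d_n(\omega,x))\big)_{n\ge1}$, which conjugates $L_{0,p}$ to the left shift on $(\{0,1\}\times\mathbb N_{\ge2})^{\mathbb N}$. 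As every branch is affine and onto, for each fixed $\omega$ the digit string determines $x$ for $\lambda$-a.e.\ $x$; moreover under $m$ the pairs $(\omega_n,d_n)$ are i.i.d., with $\omega_n$ Bernoulli$(p)$ independent of $d_n$ and $\mathbb P(d_n=d)=\lambda([\tfrac1d,\tfrac1{d-1}))=\tfrac1{d(d-1)}$ — the independence of the digits from the signs being exactly the consequence of $T_L$ and $T_A$ sharing $\mathcal P_L$. Thus $\Phi_*m$ is a product measure and $(L_{0,p},m)$ is isomorphic to a Bernoulli shift, so it is ergodic (indeed exact).

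For (i), the sign string recorded along any expansion produced by $L_{0,p}$ is exactly $(s_n)_n=(\omega_n)_n$, so distinct $\omega$ yield distinct expansions of the same $x$. An expansion is universal precisely when the sequence $\big((s_n,d_n)\big)_n$ meets every cylinder of $(\{0,1\}\times\mathbb N_{\ge2})^{\mathbb N}$; since each such cylinder has positive $\Phi_*m$-measure, ergodicity and Birkhoff's theorem give a set $U$ of full $m$-measure on which every cylinder is visited, i.e.\ the expansion is universal. By Fubini there is a $\lambda$-full set of $x$ for which $\{\omega:(\omega,x)\in U\}$ has full $\mathbb P_p$-measure; as $\mathbb P_p$ is non-atomic this set is uncountable, and by the distinctness above it furnishes uncountably many universal expansions of $x$.

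Parts (ii) and (iii) run on the arithmetic identity, valid for every sign string, $\big|x-\tfrac{p_n}{q_n}\big|=L_n\,T^n(x)$ with $L_n=\prod_{i=1}^n\tfrac1{d_i(d_i-1)}$, which follows by recognising the tail of the expansion of $x$ beyond level $n$ as $\pm L_n\,T^n(x)$. Writing $t_n:=T^n(x)$ and splitting $\tfrac1n\log|x-\tfrac{p_n}{q_n}|=\tfrac1n\log t_n+\tfrac1n\log L_n$, Birkhoff applied to $\log(d(d-1))$ gives $\tfrac1n\log L_n\to-\sum_{d\ge2}\tfrac{\log(d(d-1))}{d(d-1)}$, while $\tfrac1n\log t_n\to0$ $m$-a.e.\ by a Borel--Cantelli bound on $\{t_n<e^{-\varepsilon n}\}$ (here $t_n$ is marginally uniform since $m$ is invariant); this proves the typical value in (ii). As the rate depends only on the digit law, which is the standard L\"uroth law irrespective of $p$ and of the signs, both the $p$-independence and the full range $(-\infty,-\log2]$ (with $-\log2$ forced by $d_i(d_i-1)\ge2$) are inherited verbatim from Theorem~\ref{t:bi}. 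For (iii) the key subtlety is that the natural convergent denominator depends on the $n$-th orientation: $q_n=d_n\prod_{i<n}d_i(d_i-1)$ when $s_n=0$ but $q_n=(d_n-1)\prod_{i<n}d_i(d_i-1)$ when $s_n=1$. Together with $|x-\tfrac{p_n}{q_n}|=L_n t_n$ this gives $\theta_n=\tfrac{t_n}{d_n-1}$ if $s_n=0$ and $\theta_n=\tfrac{t_n}{d_n}$ if $s_n=1$, i.e.\ $\theta_n=g(L_{0,p}^{n-1}(\omega,x))$ for the bounded function $g(\omega,t)=T_{\omega_1}(t)/(d(t)-1+\omega_1)$, where $d(t)$ is the index with $t\in[\tfrac1{d(t)},\tfrac1{d(t)-1})$. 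Birkhoff then yields $\tfrac1n\sum_{i=1}^n\theta_i\to\int g\,dm$, and integrating $g$ over each digit interval splits this as $p\,M_L+(1-p)M_A$ (the moments of Theorem~\ref{t:fafl}), using $\int_{1/d}^{1/(d-1)}T_{\omega_1}=\tfrac1{2d(d-1)}$ and the partial-fraction sums $\sum_{d\ge2}\tfrac1{d(d-1)^2}=\zeta(2)-1$ and $\sum_{d\ge2}\tfrac1{d^2(d-1)}=2-\zeta(2)$; rearranging gives the stated expression, and letting $p$ vary recovers all of $[M_A,M_L]$.

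The main obstacle lies in the foundations rather than the estimates. The first genuinely load-bearing step is verifying that $\Phi$ is a measurable isomorphism and that $\Phi_*m$ is truly a product measure, since the ergodicity it yields underlies all three parts; this requires the standard but careful argument that full-branch affine systems code onto a full shift and that the sign and digit coordinates decouple. The second delicate point is the sign bookkeeping in (iii): one must track that the canonical denominator $q_n$, and hence $\theta_n$, switches between $\tfrac{t_n}{d_n-1}$ and $\tfrac{t_n}{d_n}$ according to $s_n$, as it is precisely this asymmetry between the orientation-preserving and orientation-reversing branches that breaks the tie between $T_L$ and $T_A$ and manufactures the $p$-dependence of the limit. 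Once these are secured, (ii) and (iii) become routine applications of Birkhoff's theorem together with the two elementary series evaluations, and the range statement in (ii) transfers directly from Theorem~\ref{t:bi}.
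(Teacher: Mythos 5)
Your proposal is correct and follows essentially the same route as the paper: invariance and ergodicity of $m_p\times\lambda$ plus Fubini and positive-measure cylinders for (i), the identity $\big|x-\frac{p_n}{q_n}\big|=T_\omega^n(x)\prod_{i=1}^n\frac{1}{d_i(d_i-1)}$ combined with Birkhoff for (ii), and the sign-dependent formula $\theta_n=x_n/(d_n-1+s_n)$ integrated against the stationary measure for (iii). The only (harmless) deviations are that you supply an explicit Bernoulli-coding proof of ergodicity where the paper merely asserts it, and in (iii) you compute $\mathbb E[\theta_n]$ by integrating $g$ directly over the digit intervals rather than first deriving the distribution function $F_{\theta_n}=pF_L+(1-p)F_A$ as the paper does; both yield $pM_L+(1-p)M_A$.
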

The result in (ii) is given by considering the Lyapunov exponent of the random system $L_{0,p}$ as was done for Theorem~\ref{t:bi}. We see that the speed of convergence is not compromised by adding randomness to the system. For (iii) we note that instead of a fractal set inside $[M_A, M_L]$ we can obtain the full interval by adding randomness. 

\begin{main}\label{t:main3}
Let $c>0$ and $0 < p < 1$.
\begin{itemize}
\item[(i)] If $0< c \le \frac25$, then the map $L_{c,p}$ generates for every irrational $x \in [c,1]$ uncountably many different generalised L\"uroth expansions.
\item[(ii)] If $c = \frac1{\ell}$ for some $\ell \in \mathbb N_{\ge 3}$, then $L_{c,p}$ generates for every irrational $x \in [c,1]$ uncountably many universal generalised L\"uroth expansions.
\end{itemize}
\end{main}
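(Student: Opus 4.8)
The plan is to isolate the single mechanism that creates genuinely different expansions and then to iterate it along an infinite binary tree of choices. Writing $t = T_L(x)$ on a branch $\big[\frac1k,\frac1{k-1}\big)$ so that $T_A(x) = 1-t$, one sees that the random choice $\omega_1$ only affects the orbit on the \emph{branching region}
\[ F := \big\{ x \in [c,1] : c \le T_L(x) \le 1-c \big\}. \]
Indeed, for $x \in F$ both images $T_L(x)=t$ and $T_A(x)=1-t$ already lie in $[c,1]$, so the correction term in the definition of $L_{c,p}$ is inactive: choosing $\omega_1=0$ keeps $t$ and records the digit--orientation pair $(k,0)$, while choosing $\omega_1=1$ keeps $1-t$ and records $(k,1)$. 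These two symbols differ, so the two choices produce two generalised L\"uroth expansions of the \emph{same} $x$ that disagree already in the current position. Off $F$, that is when $t<c$ or $t>1-c$ (the latter being possible since $c\le\frac12$), exactly one image leaves $[c,1]$ and is corrected back by the other map, so both values of $\omega_1$ yield the same point and symbol; I would call this common image $G(x)$ the \emph{forced map}. Thus distinct expansions of a fixed $x$ correspond exactly to distinct sequences of choices made at the visits of the orbit to $F$, and it suffices to produce infinitely many such visits along a full binary tree of admissible choices.

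For part (i) the heart of the matter is a reachability statement that I would prove separately: for $0<c\le\frac25$ the forward orbit of every irrational point under $G$ meets $F$ after finitely many (possibly zero) forced steps. Granting this, I would build the tree rooted at $x$ by following the forced dynamics to the first visit $y_1\in F$; at $y_1$ the two choices give the distinct irrational points $T_L(y_1)$ and $T_A(y_1)$ in $[c,1]$, irrational because each branch of $T_L$ and $T_A$ is affine with rational coefficients and so sends irrationals to irrationals. Applying the reachability statement again from each of these points, branching again, and so on, I obtain a complete binary tree whose branch points occur at an increasing sequence of times; its $2^{\aleph_0}$ infinite paths give pairwise distinct symbolic sequences, since any two paths differ in the orientation symbol $s_n$ at their first separating branch point, hence uncountably many different generalised L\"uroth expansions of $x$. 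I expect the bound $c\le\frac25$ to be exactly what makes the reachability statement hold and to be sharp, the relevant geometry being that on the truncated branch containing $c$ the map $T_L$ has the fixed point $\frac25$, beyond which one can arrange forced orbits that avoid $F$ forever. Note that Theorem~\ref{t:main1}(i) rules out only \emph{periodic} avoidance of $F$, so this geometric argument is genuinely needed.

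For part (ii) I would additionally exploit that at $c=\frac1\ell$ the cutting point is a partition endpoint, so the bottom branch $\big[\frac1\ell,\frac1{\ell-1}\big)$ is full, with $T_L$ mapping it onto $[0,1)$ and $T_A$ onto $(0,1]$. Combined with the Markov property of $L_{c,p}$ for rational $c$, this should give a controllability lemma: from every irrational point in $[c,1]$ and for every finite block $w$ that is admissible with respect to the Markov partition over the alphabet of $L_{c,p}$ (digits $2,\dots,\ell$ with orientations), there is a finite sequence of choices realising $w$ as the next block of recorded digit--orientation pairs and returning the orbit to $F$; the full bottom branch acts as a reset connecting any symbol to any other. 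I would then enumerate all admissible blocks $w_1,w_2,\dots$ and refine the tree of part (i): between two consecutive branch points, along \emph{every} current node, I insert via the controllability lemma a realisation of the next word $w_j$ before proceeding to the following branch point. The refined tree still has infinitely many branch points, hence carries uncountably many infinite paths, and by construction every such path contains every $w_j$, so $L_{c,p}$ generates uncountably many \emph{universal} generalised L\"uroth expansions of $x$.

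The main obstacle in both parts is the same pair of combinatorial--geometric facts about the forced subsystem $G$: reachability of $F$ from every irrational point for $c\le\frac25$, which I would establish by analysing the finitely many branch geometries of $G$ near $c$ and which is sharp at $\frac25$, and the controllability lemma realising arbitrary admissible blocks for $c=\frac1\ell$, together with the bookkeeping needed to interleave word realisations with branch points so that universality and uncountability hold \emph{simultaneously along every path}. Everything else, namely the distinctness of the produced expansions and the perpetuation of the branching, follows from the fact that the affine rational branches preserve irrationality, so the orbit never terminates and never becomes forced-periodic.
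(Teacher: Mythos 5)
Your reduction in part (i) is exactly the paper's: your branching region $F$ coincides with the switch region $S$ from \eqref{d:switch} (on $[z_n,z_{n-1})$ one has $T_L(x)\in[c,1-c]$ precisely when $x\in[z_n^+,z_{n-1}^-]$), and your binary tree of choices at successive visits to $S$ is in substance the proof of Theorem~\ref{t:uncountable25}, including the observation that flipping $\omega$ at a visit to $S$ flips the sign $s_n$ and hence changes the expansion. The gap is that the entire content of part (i) sits in the reachability statement you merely ``grant'': this is Lemma~\ref{l:allinS}, the only place where $c\le\frac25$ is used, and it requires a genuine case analysis. Off $S$ the forced orbit lands in $(1-c,1)$; for $c\le\frac13$ one iterates the full branch $x\mapsto 2x-1$ until the orbit drops into $[z_2^+,z_1^-]\subseteq S$; for $\frac13<c\le\frac25$ one needs in addition that $T_{j,c}(1-c)=2c$ lies in $[z_2^+,z_1^-]$ --- the inequality $2c\le 1-\frac{c}{2}$ is exactly where $c\le\frac25$ enters --- together with an escape argument from the repelling fixed point $\frac23$ of $T_A$ for orbits trapped in $(1-c,z_2^+)$. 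Your proposed explanation of the threshold via the fixed point $\frac25$ of the branch of $T_L$ on $[\frac13,\frac12)$ is not the mechanism, and sharpness of $\frac25$ is neither claimed nor proved in the paper. Without a proof of reachability, part (i) is incomplete.

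For part (ii) you propose a route genuinely different from the paper's, but its key step is also unproven, and as literally stated your controllability lemma is false: from a given irrational $y$ the next recorded pair has its digit forced by which interval $[z_n,z_{n-1})$ contains $y$ (and off $S$ the sign is forced too), so an arbitrary admissible block cannot be realised ``as the next block''; at best one could hope to realise it after a steering prelude, and showing that such a prelude exists from \emph{every} irrational starting point is a substantive unproved claim --- note that from any point of $S$ both choices land in $[c,1-c]$, so reaching, say, the cylinder of $(0,2)(0,2)$ already requires an argument. The paper instead proves Theorem~\ref{t:universal} only for Lebesgue-a.e.\ $x$: it introduces the auxiliary system $K_c$ that shifts $\omega$ only on $S$ (making the coding by $\omega$ injective), proves ergodicity of $m_p\times\mu_{p,c}$ for $K_c$ (Proposition~\ref{l:invergK}) and the random covering property giving $\mu_{p,c}\sim\lambda$ (Proposition~\ref{p:equivm}), and then applies Birkhoff and Fubini. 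So your approach, if completed, would yield a strictly stronger ``every irrational $x$'' conclusion than the paper actually establishes; but as written both of your load-bearing lemmas are assertions rather than proofs, and one of them needs to be reformulated before it can even be true.
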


Notice that results corresponding to (ii) and (iii) from Theorem~\ref{t:main2} in case $c>0$ are missing. For $c>0$ the speed of convergence of the sequence $\big(\frac{p_n}{q_n} \big)_n$ is still governed by the Lyapunov exponent of the map $L_{c,p}$, but this is not so easily computed. For $c=0$ we are in the lucky circumstance that $m_p \times \lambda$ is an invariant measure for $L_{0,p}$, where $m_p$ is the $(p,1-p)$-Bernoulli measure on $\{0,1\}^\mathbb N$  and $\lambda$ is the one-dimensional Lebesgue measure. General results give the existence of an invariant measure of the form $m_p \times \mu_{p,c}$ where $\mu_{p,c} \ll \lambda$. In most cases the random systems $L_{c,p}$ satisfy the conditions from \cite[Theorem 4.1]{KM}, which gives an expression for the density $\frac{\de \mu_{p,c}}{\de \lambda}$ in individual cases. In the last section we discuss some values of $c$ for which we can determine a nice formula for this density. We then get a result similar to Theorem~\ref{t:main2}(ii) and compute the frequency of the digits $d$ in the generalised L\"uroth expansions. 
%Finding results corresponding to (ii) and (iii) from Theorem~\ref{t:main2} in case $c>0$ requires a good knowledge of the invariant measures of the random systems $L_{c,p}$. Rather, we prove that for each $c \in \big( 0, \frac12 \big] \cap \mathbb Q$ the map $L_{c,p}$ admits a Markov partition and in some special cases (including the values $c = \frac1{2^n}$) we can compute the density of $\mu_{p,c}$. In some of these cases we can get a result similar to Theorem~\ref{t:main2}(ii) and compute the frequency of the digits $d$ in the generalised L\"uroth expansions.\\

\vskip .2cm
The paper is organised as follows. In Section \ref{s:skewproduct} we describe how to obtain generalised L\"uroth expansions from the random maps $L_{c,p}$ and we characterise numbers with ultimately periodic expansions. Here we prove Theorem~\ref{t:main1}. Section~\ref{s:analysisapprox} is dedicated to the case $c=0$. Theorem~\ref{t:main2}(i) is proved in Proposition~\ref{p:universal0}, part (ii) is covered by Proposition~\ref{p:le} and part (iii) is done in Proposition~\ref{p:ethetan}. In Section~\ref{s:finitea} we focus on $c>0$. Theorem~\ref{t:main3} corresponds to the content of Theorem~\ref{t:uncountable25} and Theorem~\ref{t:universal}. Proposition~\ref{p:markov} contains the result that $L_{c,p}$ admits a Markov partition for any $c \in \big(0, \frac12 \big] \cap \mathbb Q$ and is followed by various examples in which we explicitly compute the density of the measure $\mu_{c,p}$ and in some cases also the typical speed of convergence of the convergents $\frac{p_n}{q_n}$ as well as the frequency of the digits.

\section{Random $c$-L\"uroth transformations}\label{s:skewproduct}
In this section we introduce the family $\{ L_{c,p} \}_{c \in [0, \frac12], 0 \le p \le 1}$ of random $(c,p)$-L\"uroth transformations and show how these maps produce generalised L\"uroth expansions for all $x \in [c,1]$. Since the probability $p$ does not play a role in this section, we drop the subscript for now and refer to the map $L_c$ as the random $c$-L\"uroth transformation instead. First recall some notation for sequences. Let $\mathcal A $ be an at most countable set of symbols, called {\em alphabet}. Let $\sigma: \mathcal A^\mathbb N \to \mathcal A^\mathbb N$ denote the left shift on sequences, so for a sequence $ a = (a_i)_{i \ge 1} \in \mathcal A^\mathbb N$ we have $\sigma  (a)  =a'$, where $a'_i = a_{i+1}$ for all $i$; with slight abuse of notation we will always use $\sigma$ to denote the left shift on sequences, regardless of the underlying alphabet. For a finite string $a \in \mathcal A^k$ and $1 \le n \le k$ or an infinite sequence $\mathbf{a} \in \mathcal A^\mathbb N$ and $n \ge 1$ we denote by $a_1^n$ the initial part $a_1^n= a_1 \cdots a_n$. We call a sequence $\mathbf{a} = (a_i)_{i \ge 1}$ {\em ultimately periodic} if there exist an $n \ge 0$ and an $r \ge 1$ such that $a_{n+j}=a_{n+r+j}$ for all $j \ge 1$ and {\em periodic} if $n=0$. Finally, we denote cylinder sets in $\mathcal A^{\mathbb{N}}$ using square brackets, i.e., 
\[[b_1, \dots ,b_k]= \{\mathbf{a} \in \mathcal A^{\mathbb{N}}\, : \, a_j=b_j, \, \text{for all } 1 \leq j \leq k\}.\]

\vskip .2cm
For $c \in [0, \frac12]$ and $n \ge 1$ let $z_n = \frac{1}{n}$,
\begin{equation}\label{q:zn}
z_n^+ := z_n + c z_n z_{n-1}  \quad \text{ and } \quad z_{n}^- := z_{n} - c z_n z_{n+1}.
\end{equation}
Then $z_n \le z_n^+ \le z_{n-1}^- \le z_{n-1}$. Define
\[T_{0,c}(x) = \begin{cases} 
T_A(x), & \text{if } x \in \{0\} \cup \bigcup\limits_{n=2}^{\infty} [z_n, z_n^+), 
\\[8pt]
T_L(x), & \text{if } x \in \{ 1 \} \cup \bigcup\limits_{n=2}^{\infty} [z_n^+, z_{n-1}),
\end{cases}\]
and
\[T_{1,c}(x) = \begin{cases} 
T_A(x), & \mbox{if } x \in \{0\} \cup \bigcup\limits_{n=2}^{\infty} [z_n, z_{n-1}^-], \\[8pt]
T_L(x), & \mbox{if } x \in  \{1\} \cup \bigcup\limits_{n=2}^{\infty} (z_{n-1}^-,z_{n-1}).
\end{cases}\]
As can be seen from Figure \ref{f:attractor}, the interval $[c,1]$ is an attractor for the dynamics of both $T_{0,c}$ and $T_{1,c}$, so we choose $[c,1]$ as their domain. Note that $T_{0,c}$ and $T_{1,c}$ assume the same values on the intervals $[z_n, z_n^+)$ and $(z_n^-,z_n]$ and differ on $[z_n^+, z_{n-1}^-]$ for $n > 1$. We denote the union of the subintervals on which $T_{0,c}$ and $T_{1,c}$ assume different values by $S$, i.e., 
\begin{equation}\label{d:switch}
S= [c,1] \cap \bigcup_{n >1}  [z_n^+, z_{n-1}^-],
\end{equation}
and call this the \emph{switch region}. For $c=0$ we see that $S = (0,1]\setminus \{ z_n \, : \, n \ge 1 \}$ and $T_{0,0}=T_L$ and $T_{1,0}=T_A$ except for the points $z_n$ where $T_{0,0}(z_n) = T_{1,0}(z_n)=1$. We combine these two maps to make a random dynamical system by defining the {\em random $c$-L\"uroth transformation}\index{random affine map!$c$-L\"uroth map} $L_c: \{ 0,1\}^\mathbb N \times [c,1] \to  \{ 0,1\}^\mathbb N \times [c,1]$ by
\[L_c(\omega, x) = (\sigma(\omega), T_{\omega_1, c} (x)).\]
See Figure~\ref{f:attractor}(c), for an example with $c=\frac14$.

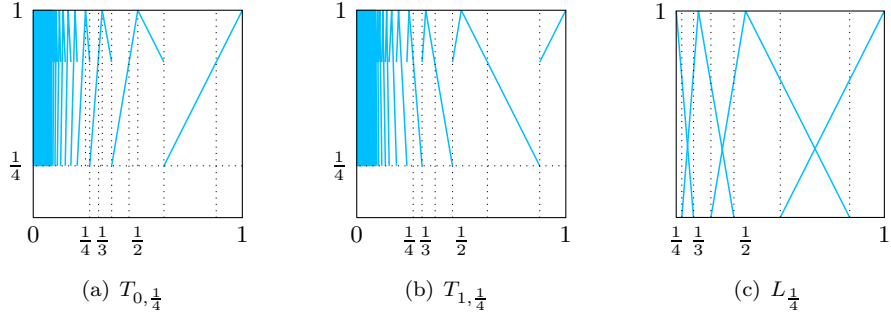
\begin{figure}[h]
\centering
\setcounter{subfigure}{0}
\subfigure[$T_{0, \frac14}$]{
\begin{tikzpicture}[scale=2.75]
\draw[fill=capri, capri]  (0,.25) -- (.09,.25) -- (.09,1) -- (0,1) -- cycle;
\draw[dotted](.25,.25)--(.25,1)(.33,.25)--(.33,1)(.5,.25)--(.5,1);
\draw[line width=0.2mm, capri](.33,1)--(.375,.75)(.5,1)--(.625,.75);
\draw[line width=0.2mm, capri](.09,1)--(.094,.75)(.094,.25)--(.1,1)(.1,1)--(.104,.75)(.104,.25)--(.11,1)(.11,1)--(.115,.75)(.115,.25)--(.125,1)(.125,1)--(.131,.75)(.131,.25)--(.143,1)(.143,1)--(.153,.75)(.153,.25)--(.167,1)(.167,1)--(.18,.75)(.18,.25)--(.2,1)(.2,1)--(.21,.75)(.21,.25)--(.25,1)(.25,1)--(.27,.75)(.27,.25)--(.33,1)(.375,.25)--(.5,1)(.625,.25)--(1,1);
\draw[dotted](.625,0)--(.625,1)(.375,0)--(.375,1)(.27,0)--(.27,1);
\draw[dotted](.875,0)--(.875,1)(.458,0)--(.458,1)(.3125,0)--(.3125,1);
\draw[dotted] (1,.25)--(0,.25)node[left]{\small $\frac14$};
\draw(0,0)node[below]{\small $0$}--(.25,0)node[below]{\small $\frac14$}--(.33,0)node[below]{\small $\frac13$}--(.5,0)node[below]{\small $\frac12$}--(1,0)node[below]{\small $1$}--(1,1)--(0,1)node[left]{\small $1$}--(0,0);
\end{tikzpicture}}
\hspace{5mm}
\subfigure[$T_{1,\frac14}$]{
\begin{tikzpicture}[scale=2.75]
\draw[fill=capri, capri]  (0,.25) -- (.09,.25) -- (.09,1) -- (0,1) -- cycle;
\draw[line width=0.2mm, capri](.25,1)--(.3125,.25)(.33,1)--(.458,.25)(.5,1)--(.875,.25);
\draw[line width=0.2mm, capri](.09,1)--(.096,.25)(.096,.75)--(.1,1)(.1,1)--(.105,.25)(.105,.75)--(.11,1)(.11,1)--(.12,.25)(.12,.75)--(.125,1)(.125,1)--(.137,.25)(.137,.75)--(.143,1)(.143,1)--(.155,.25)(.155,.75)--(.167,1)(.167,1)--(.19,.25)(.19,.75)--(.2,1)(.2,1)--(.24,.25)(.24,.75)--(.25,1)(.25,.25)(.3125,.75)--(.33,1)(.458,.75)--(.5,1)(.875,.75)--(1,1);
\draw[dotted](.625,0)--(.625,1)(.375,0)--(.375,1)(.27,0)--(.27,1);
\draw[dotted](.875,0)--(.875,1)(.458,0)--(.458,1)(.3125,0)--(.3125,1);
\draw[dotted] (1,.25)--(0,.25)node[left]{\small $\frac14$};
\draw(0,0)node[below]{\small $0$}--(.25,0)node[below]{\small $\frac14$}--(.33,0)node[below]{\small $\frac13$}--(.5,0)node[below]{\small $\frac12$}--(1,0)node[below]{\small $1$}--(1,1)--(0,1)node[left]{\small $1$}--(0,0);
\end{tikzpicture}}
\hspace{5mm}
\subfigure[$L_{\frac14}$]{
\begin{tikzpicture}[scale=3.65]
\draw[line width=0.2mm, capri](.25,1)--(.3125,.25)(.33,1)--(.458,.25)(.5,1)--(.875,.25);
\draw[line width=0.2mm, capri](.25,.25)(.27,.25)--(.33,1)(.375,.25)--(.5,1)(.625,.25)--(1,1);
\draw[dotted](.625,.25)--(.625,1)(.375,.25)--(.375,1)(.27,.25)--(.27,1);
\draw[dotted](.875,.25)--(.875,1)(.458,.25)--(.458,1)(.3125,.25)--(.3125,1);
\draw(.25,.25)node[below]{\small $\frac14$}--(.33,.25)node[below]{\small $\frac13$}--(.5,.25)node[below]{\small $\frac12$}--(1,.25)node[below]{\small $1$}--(1,1)--(.25,1)node[left]{\small $1$}--(.25,.25);
\end{tikzpicture}}
\caption{The maps $T_{0,\frac14}$, $T_{1,\frac14}$, and the random $c$-L\"uroth map $L_{\frac14}$ on $\big[\frac14,1\big]$.}
\label{f:attractor}
\end{figure}

\vskip .2cm
To denote the orbit along a specific path $\omega \in \{0,1\}^\mathbb N$ we use the following notation. For a finite string $\omega \in \{0,1\}^k$ and $0 \leq n \leq k$, let 
\begin{equation}\label{q:iterations}
T_{\omega,c}= T_{\omega_k,c} \circ T_{\omega_{k-1},c} \circ \dots \circ T_{\omega_1,c} \quad  \text{and} \quad T_{\omega,c}^n = T_{\omega_1^n,c}= T_{\omega_n,c} \circ T_{\omega_{n-1},c} \circ \dots \circ T_{\omega_1,c}.
\end{equation}
Similarly, for an infinite path $\omega \in \{0,1\}^{\mathbb N}$ and $n \ge 1$ we let
\begin{equation}\label{q:iterationsseq}
 T_{\omega,c}^n = T_{\omega_1^n,c}= T_{\omega_n,c} \circ T_{\omega_{n-1},c} \circ \dots \circ T_{\omega_1,c}.
\end{equation}
For any $\omega$ we also set $T_{\omega,c}^0= T_{ \omega_1^0,c}= id$. Note that we have defined $T_{0,c}$ and $T_{1,c}$ in such a way that $T_{j,c}(x) \neq 0$ for any $j,c,x$. With this notation we can obtain {\em $c$-L\"uroth expansions}\index{$c$-L\"uroth expansion} of points in $[c,1]$ by defining for each $(\omega,x) \in \{0,1\}^\mathbb N \times [c,1]$ two sequences $(s_i)_{i \ge 1}$ (the signs) and $(d_i)_{i \ge 1}$ (the digits) as follows. For $(\omega, x) \in \{0,1\}^{\mathbb{N}} \times [c,1]$ set for $c >0$ and for $i \geq 1$,
\[s_i = s_i (\omega, x) = \begin{cases} 
0, & \mbox{if } L_c^{i-1}(\omega,x) \in  [0] \times S  \cup  \{0,1\}^{\mathbb{N}} \times \big( \cup_n (z_n^-,z_n) \cup \{1\} \big) ,\\[5pt]
1, & \mbox{if } L_c^{i-1}(\omega,x) \in [1] \times S \cup \{0,1\}^{\mathbb{N}} \times \big( \cup_n [z_n,z_n^+) \cup \{0\} \big).
\end{cases}\]
For $c=0$, set $s_i= \omega_i$ for each $i$. For $c \geq 0$, $x \neq 0$ and for $i\ge 1$ set 
\[d_i = d_i(\omega, x)=\begin{cases}
2, & \text{ if } T_{\omega,c}^{i-1}(x)=1,\\
n,& \text{ if } \, T_{\omega,c}^{i-1}(x) \in [z_n, z_{n-1}), \, n \ge 2.
\end{cases}\]
Then one can write for $x \neq 0$ and each $i \ge 1$ that
\[  T_{\omega,c}^i(x)=(-1)^{s_i} d_i(d_i-1) T_{\omega,c}^{i-1}(x) + (-1)^{s_i+1}(d_i-1+s_i).\]
By inversion and iteration we obtain what we call the {\em $c$-L\"uroth expansion} of $(\omega,x)$:
\begin{equation}\label{d:clurothe}
x=\sum_{n=1}^{\infty} (-1)^{\sum_{i=1}^{n-1} s_i(\omega,x)} \frac{d_n(\omega,x)-1 + s_n(\omega,x)}{\prod_{i=1}^{n} d_i(\omega,x)(d_i(\omega,x)-1)},
\end{equation}
where $\sum_{i=1}^0 s_i(\omega,x)=0$ and the sum converges since $d_i\ge 2$ and $T^i_{\omega,c} (x) \in (0,1]$ for all $\omega,x,i$. Note that this expansion of $x$ is of the form \eqref{d:glse}, i.e., it is a generalised L\"uroth expansion.

\begin{remark}\label{r:notintersection}
(i) Due to the fact that $T_{j,c}(x) \neq 0$ for all $j,c$ and $x\neq 0$, the maps $L_c$ do not produce finite generalised L\"uroth expansions. That is, $L_c$ assigns to each $(\omega, x)$ an infinite sequence $(s_n(\omega,x),d_n(\omega,x))_n$ with $s_n \in \{0,1\}$ and $d_n \geq 2$. 

\vskip .2cm
(ii) As an immediate consequence of the above, we see that for each $D \ge 2$ \emph{every} $x \in \big[\frac1D, 1\big]$ has a generalised L\"uroth expansion that only uses digits $d_n \le D$ and that is generated by any random $c$-L\"uroth system with $c \ge \frac1D$. This is in contrast to the deterministic case, where by the result of \v{S}al\'at in \cite{Sal} for any $D$ the set of points $x$ that has $D$ as an upper bound for the L\"uroth digits has Hausdorff dimension strictly less than 1.

\vskip .2cm
(iii) If $x \in S$ and $x \not \in \big\{ \frac{2n-1}{2n(n-1)} \, : \, n \ge 1 \big\}$ (so $T_L(x) \neq T_A(x)$), then $d_2(\omega,x)=2$ for all $\omega$ with $T_{\omega_1}(x) > T_{1-\omega_1}(x)$ and $d_2(\omega,x) >2$ otherwise. For $c=0$ this implies, since $(s_n)_{n \geq 1} = (\omega_n)_{n \geq 1}$ and $S=(0,1] \setminus \{ \frac1n \, : \, n \ge 1\}$, that Lebesgue almost all $x \in [0,1]$ have uncountably many different random L\"uroth expansions. In Section~\ref{s:finitea} we see that a similar statement holds for $c>0$ and we get that most points even have uncountably many different generalised L\"uroth expansions with all $d_n \le D$.
\end{remark}

\vskip .2cm 
Similar to the deterministic case, we call a $c$-L\"uroth expansion of $(\omega,x)$ {\em (ultimately) periodic} if the corresponding sequence $(s_i,d_i)_{i \ge 1}$ is (ultimately) periodic. From the expression \eqref{d:clurothe} it is clear that the $c$-L\"uroth expansion of $(\omega,x)$ is ultimately periodic if and only if there are $n \ge 0$ and $r\ge 1$ such that 
\[T_{\omega,c}^{n+j}(x) = T_{\omega,c}^{n+r+j}(x),\] 
for all $j \ge 0$, implying that $x \in \mathbb Q$. We define the following weaker notion.

\begin{definition}[Returning points\index{returning point}]
Let $c \in \big[0, \frac12 \big]$. We call a number $x \in [c,1]$ {\em returning} for $L_c$ if for every $\omega \in \{0,1\}^{\mathbb{N}}$ there exist an $n=n(\omega) \ge 0$ and an $r = r(\omega) \ge 1$ such that $T_{\omega,c}^n(x)= T_{\omega,c}^{n+r} (x)$. 
\end{definition}

\begin{lemma}\label{p:returningdense}
Let $c  \in \big[ 0, \frac12 \big]$. If $x \in \mathbb{Q} \cap [c,1]$, then $x$ is a returning point for $L_c$. Hence, the set of returning points is dense in $[c,1]$. 
\end{lemma}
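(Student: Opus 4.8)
The plan is to exploit the fact that, although the branches of $T_{0,c}$ and $T_{1,c}$ are cut at the $c$-dependent points $z_n^+$ and $z_{n-1}^-$, on each branch the map agrees with either $T_L$ or $T_A$. Thus for every $x \in [c,1]$ and every $j \in \{0,1\}$ we have $T_{j,c}(x) \in \{T_L(x), T_A(x)\}$, and I can analyse the orbit purely through the two base maps. The strategy is then to show that every rational $x \in [c,1]$ has a finite forward orbit under any sequence of these maps, so that a repetition is forced by the pigeonhole principle.

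First I would record the explicit affine form of the base maps. For $x \in [z_n, z_{n-1}) = \big[\tfrac1n, \tfrac1{n-1}\big)$ one has $\lceil 1/x \rceil = n$, so
\[
T_L(x) = n(n-1)x - (n-1), \qquad T_A(x) = 1 - T_L(x) = n - n(n-1)x.
\]
Both are affine with \emph{integer} coefficients. Consequently, if $x$ can be written as $\tfrac{i}{b}$ for some integer $i$ (not necessarily in lowest terms), then $T_L(\tfrac{i}{b}) = \tfrac{n(n-1)i - (n-1)b}{b}$ and $T_A(\tfrac{i}{b}) = \tfrac{nb - n(n-1)i}{b}$ are again of the form (integer)$/b$. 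Since $T_{j,c}(x)$ equals one of these two values, the denominator $b$ is never enlarged along the orbit, regardless of which branch is selected.

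Next I would combine this with the forward invariance of $[c,1]$ under both $T_{0,c}$ and $T_{1,c}$. Fix $\omega \in \{0,1\}^{\mathbb N}$ and $x = \tfrac{a}{b} \in \mathbb{Q} \cap [c,1]$ with $a,b \in \mathbb Z$, $b \ge 1$. By induction on $k$, every orbit point $T_{\omega,c}^k(x)$ lies in $[c,1] \subseteq [0,1]$ and is of the form $\tfrac{i}{b}$ with $i \in \mathbb Z$, hence the whole forward orbit is contained in the finite set
\[
F_b := \Big\{ \tfrac{i}{b} : 0 \le i \le b \Big\} \cap [c,1],
\]
which has at most $b+1$ elements. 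Applying the pigeonhole principle to the $b+2$ points $T_{\omega,c}^0(x), T_{\omega,c}^1(x), \dots, T_{\omega,c}^{b+1}(x)$ yields indices $0 \le n < m \le b+1$ with $T_{\omega,c}^n(x) = T_{\omega,c}^m(x)$; setting $r = m-n \ge 1$ gives exactly the identity $T_{\omega,c}^n(x) = T_{\omega,c}^{n+r}(x)$ required by the definition of a returning point. As $\omega$ was arbitrary, $x$ is returning. Finally, since $\mathbb{Q} \cap [c,1]$ is dense in $[c,1]$ and every such rational is returning, density follows at once.

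The only step requiring genuine care is the first: confirming that the $c$-dependent cutting does not spoil the integer-coefficient, hence denominator-preserving, structure. This is immediate once one notes that each branch of $T_{j,c}$ is a restriction of $T_L$ or $T_A$, so I do not expect any serious obstacle beyond bookkeeping the endpoints (e.g.\ $x=1$, a fixed point of both maps, and representations of $\tfrac{i}{b}$ not in lowest terms), all of which the finite-set argument absorbs automatically.
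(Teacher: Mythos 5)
Your proof is correct and follows essentially the same route as the paper: both arguments observe that every orbit point of $x=\tfrac{P}{Q}$ under any $T_{\omega,c}^t$ is an integer combination $a\tfrac{P}{Q}+b$ lying in $[c,1]$, hence belongs to the finite set $\big\{\tfrac{i}{Q} : 0\le i\le Q\big\}$, and then conclude by the pigeonhole principle. Your write-up merely makes explicit the integer affine form of the branches of $T_L$ and $T_A$, which the paper leaves implicit.
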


\begin{proof}
Let $\frac{P}{Q}$ be the reduced rational representing $x$, for $P, Q \in \mathbb{N}$. Then for any $\omega \in \{0,1\}^{\mathbb{N}}$ and any $t \in \mathbb{N}$ there exist $a,b \in \mathbb{Z}$ such that  $T_{\omega,c}^t (x)= a \frac{P}{Q} + b \in [c,1]$. This implies that $aP+bQ \in \{0,1, \ldots, Q\}$. It then follows by Dirichlet's Box Principle that for some $n \in \mathbb{N}$ there exists an $r \ge 1$ such that $T_{\omega,c}^n(x)=T_{\omega,c}^{n+r} (x)$. 
  \end{proof} 

Contrarily to the deterministic case, the fact that there are $n,r$ with $T_{\omega,c}^n(x)=T_{\omega,c}^{n+r} (x)$ does not necessarily imply that $T_{\omega,c}^{n+j} (x)=T_{\omega,c}^{n+r+j} (x)$ for all $j \geq 0$, since one can make a different choice when arriving at a point in $S$ for the second time. To characterise the ultimately periodic $c$-L\"uroth expansions we define {\em loops}.

\begin{definition}[Loop\index{loop}]
A string $\mathbf{u}$ of symbols in $\{ 0,1\}$ is called a \emph{loop} for $x \in [c,1]$ at the point $y \in [c,1]$ if there exist $\omega \in \{0,1\}^{\mathbb{N}}$ and $n=n(\omega)\ge 0$, $r=r(\omega) \ge 1$ such that
\begin{equation}\label{e:loopdef}
\omega_{n+1}^{n+r} = \mathbf{u}, \quad T_{\omega,c}^n(x)=y= T_{\omega,c}^{n+r}(x) \quad \text{and} \quad T_{\omega,c}^{n+j}(x)\neq y \quad \text{ for } 1 \le j <r.
\end{equation}
We say that $x$ admits the loop $\mathbf u$ at $y$.
\end{definition}

For each $x,y \in [c,1]$ we define an equivalence relation on the collection of loops $\{ \mathbf u \}$ of $x$ at $y$ by setting $\mathbf{u}_1 \sim \mathbf{u}_2$ if the corresponding paths $\omega_1, \omega_2 \in \{0,1\}^{\mathbb{N}}$ satisfying \eqref{e:loopdef} both assign the same strings of signs and digits, i.e., if 
\begin{equation}\label{q:sameLdigits}
(s(\omega_1,x), d(\omega_1,x))_{n(\omega_1)+1}^{n(\omega_1)+r} = (s(\omega_2,x), d (\omega_2,x) )_{n(\omega_2)+1}^{n(\omega_2)+r},
\end{equation}
where $r = r(\omega_1)=r(\omega_2)$. We need this definition since for $x \in  [c,1] \setminus S$, $T_{\omega,c} (x)$ is independent of the choice of $\omega \in \{0,1\}$, i.e., $T_{0,c}(x)= T_{1,c}(x)$ and the corresponding sign and digit only depend on the position of $x$, and not on $\omega$. As a consequence, it is necessary that $T_{\omega,c}^n(x)\in S$ for some $\omega \in \{0,1\}^{\mathbb{N}}$ and $n \geq 0$, to have more than one loop (that is, more than one equivalence class).

\begin{proposition}\label{p:loop}
Let $c \in \big[ 0, \frac12 \big]$ and $x \in \mathbb{Q} \cap [c,1]$. 
\begin{itemize}
\item[(i)] Suppose that there exists an $\omega \in \{0,1\}^{\mathbb N}$ such that $T_{\omega,c}^n(x) \notin S$ for all $n \geq 0$. Then $x$ has a unique and ultimately periodic $c$-L\"uroth expansion. 
\item[(ii)] Suppose that for each $y \in [c,1]$ the point $x$ admits at most one loop at $y$. Then all $c$-L\"uroth expansions of $x$ are ultimately periodic, so there are at most countably many of them.
\item[(iii)] Suppose there is a $y \in [c,1]$, such that $x$ admits at least two loops $\mathbf u_1 \nsim \mathbf u_2$ at $y$. Then $x$ has uncountably many $c$-L\"uroth expansions that are not ultimately periodic, and countably many ultimately periodic $c$-L\"uroth expansions.
\end{itemize}
\end{proposition}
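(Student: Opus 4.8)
The plan is to lean throughout on two structural facts. First, if $x=\frac PQ\in\mathbb Q\cap[c,1]$, then the computation in the proof of Lemma~\ref{p:returningdense} shows every iterate $T_{\omega,c}^n(x)$ has the form $\frac aQ$ with $aP+bQ\in\{0,1,\dots,Q\}$, so the whole forward orbit stays inside the \emph{finite} set $\frac1Q\mathbb Z\cap[c,1]$, regardless of $\omega$. Second, outside the switch region $S$ the two maps agree, so at a point not in $S$ the next iterate, as well as the associated sign and digit, do not depend on the driving symbol. For part (i), if some $\omega$ keeps the orbit out of $S$ forever, then by the second fact \emph{every} driving sequence produces the same orbit and the same sign/digit data, giving a unique $c$-L\"uroth expansion; by the first fact this orbit revisits a point, and since the dynamics along it is now effectively deterministic, a coincidence $T_{\omega,c}^n(x)=T_{\omega,c}^{n+r}(x)$ propagates to all later times, so the expansion is ultimately periodic.

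For part (ii), fix any $\omega$. By finiteness of the orbit some value $y$ is attained at infinitely many times $n_1<n_2<\cdots$, and each block $\omega_{n_k+1}^{n_{k+1}}$ is by construction a first-return loop at $y$ in the sense of \eqref{e:loopdef}. The hypothesis that $x$ admits at most one loop at $y$ forces all these blocks to carry identical sign/digit words; hence from time $n_1$ onwards the expansion repeats a single word and is ultimately periodic. As this holds for every $\omega$, and there are only countably many ultimately periodic sign/digit sequences over the countable digit alphabet, $x$ has at most countably many expansions.

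For part (iii), fix a finite driving string steering $x$ to $y$ (it exists since $y$ lies on an orbit of $x$). Because both loops start and return at $y$, any infinite word over the two symbols $\mathbf u_1,\mathbf u_2$ may be appended after this prefix to give a legitimate path, whose $c$-L\"uroth expansion is the fixed prefix word followed by the corresponding concatenation of the loops' sign/digit words $w_1,w_2$; since $\mathbf u_1\nsim\mathbf u_2$ we have $w_1\neq w_2$. The step I expect to be the main obstacle is to upgrade ``$w_1\neq w_2$'' to an uncountable, mostly aperiodic family of distinct expansions. I would do this through a primitivity lemma: the sign/digit word of any first-return loop is primitive. Indeed, each elementary step sends the current point $z$ to $(-1)^{s}d(d-1)z+(-1)^{s+1}(d-1+s)$, an affine map of slope $\pm d(d-1)$ with $|d(d-1)|\ge 2$, so the affine map $A_w$ realising a word $w$ is expanding and has a unique fixed point. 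If $w=v^{k}$ with $k\ge 2$, then $A_w=A_v^{\circ k}$ fixes $y$, which forces $y$ to be the fixed point of $A_v$ itself and hence a return to $y$ already after $|v|<|w|$ steps, contradicting the first-return condition in \eqref{e:loopdef}.

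With $w_1,w_2$ distinct and both primitive, they cannot be powers of a common word, so $w_1w_2\neq w_2w_1$. Setting $B_0=w_1w_2$ and $B_1=w_2w_1$, which have equal length but are distinct, the map $(\varepsilon_k)_k\mapsto B_{\varepsilon_1}B_{\varepsilon_2}\cdots$ from $\{0,1\}^{\mathbb N}$ is injective, since the blocks have a fixed common length and differ wherever the $\varepsilon_k$ differ; thus its image is uncountable and every element is a valid loop-concatenation. Because there are only countably many ultimately periodic sign/digit sequences, the injective preimage of this countable set is countable, so uncountably many choices $(\varepsilon_k)$ give pairwise distinct concatenations that are not ultimately periodic; prepending the fixed prefix preserves both distinctness and non-periodicity, yielding uncountably many non-ultimately-periodic expansions of $x$. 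Finally, the ultimately periodic expansions of $x$ are at most countably many by the same count, while the periodic path $(\mathbf u_1)^{\infty}$ exhibits at least one, which completes the trichotomy.
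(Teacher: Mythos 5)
Your proof is correct, and for parts (i) and (ii) it follows essentially the same route as the paper: determinism of the orbit outside $S$ plus the finiteness of the rational orbit (the $\frac1Q\mathbb Z$ observation from Lemma~\ref{p:returningdense}) for (i), and the decomposition of any orbit into first-return loops at a point visited infinitely often for (ii). For part (iii) you use the same basic construction as the paper --- steer to $y$ and then concatenate the two inequivalent loops in varying orders --- but you execute the key step differently. The paper takes $\tilde\omega=\omega_1^n\mathbf u_1^{\ell_1}\mathbf u_2^{\ell_2}\cdots$ and simply asserts that non-ultimately-periodic $(\ell_j)$ yield non-ultimately-periodic digit sequences and that distinct $(\ell_j)$ yield distinct expansions; as you note, this is the delicate point, since a priori the sign/digit words $w_1,w_2$ of the two loops could be powers of a common word (e.g.\ $w_2=w_1^2$), in which case the concatenation would collapse. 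Your primitivity lemma --- a first-return loop word is primitive because the associated expanding affine map $A_w$ has a unique fixed point, so $w=v^k$ with $k\ge2$ would force an earlier return to $y$ --- rules this out, and combined with the standard fact that two distinct primitive words do not commute you get $w_1w_2\neq w_2w_1$, hence an injective equal-length block encoding of $\{0,1\}^{\mathbb N}$; the cardinality argument then disposes of the ultimately periodic exceptions. This buys a fully rigorous justification of a step the paper leaves implicit, at the cost of invoking a small amount of combinatorics on words. The only cosmetic caveat is that in (ii) you should say explicitly that $n_1<n_2<\cdots$ enumerates \emph{all} visits to $y$, so that the intermediate blocks really satisfy the first-return condition in \eqref{e:loopdef}; and in (iii), if ``countably many'' is read as ``countably infinitely many'', one should add that the periodic paths $(\mathbf u_1^k\mathbf u_2)^\infty$, $k\ge1$, give infinitely many distinct ultimately periodic expansions (the paper is equally brief on this point).
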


\begin{proof}
For (i) note that if there exists an $\omega \in \{0,1\}^{\mathbb{N}}$ such that $T^n_{\omega,c} (x) \notin S$ for all $n \geq 0$, then $T^n_{\omega,c} (x)$ is independent of the choice of $\omega$ for any $n \geq 0$ and any path $\omega \in \{0,1\}^{\mathbb{N}}$ yields the same $c$-L\"uroth expansion. The result then follows from Lemma~\ref{p:returningdense}.

\vskip .2cm
For (ii) suppose by contradiction that there exists an $\omega \in \{0,1\}^{\mathbb{N}}$ such that $(\omega,x)$ presents a $c$-L\"uroth expansion that is not ultimately periodic. Since $x \in \mathbb Q$, the set $\{T_{\omega,c}^n(x)\}_{n \geq 0}$ consists of finitely many points, and so, in particular, there exists a point $y = T_{\omega,c}^j(x)$ for some $j \geq 0$, that is visited infinitely often. Let $\{j_i\}_{i \in \mathbb{N}}$ be the sequence such that $T_{\omega,c}^{j_i} = y$ for every $j_i$. Since $(\omega, x)$ does not have an ultimately periodic expansion, there exists a $k$ such that 
\[( s(\omega, x), d(\omega, x))_{j_{k-1}+1}^{j_k} \neq ( s(\omega, x), d(\omega, x))_{j_k+1}^{j_{k+1}},\]
which means in particular that the loops $\omega_{j_{k-1}+1}^{j_k}$ and $\omega_{j_{k}+1}^{j_{k+1}} $ are not in the same equivalence class, contradicting the assumption on the number of admissible loops at $y$. The second part follows since the sequence $(s_n(\omega,x),d_n(\omega,x))_n$ takes its digits in an at most countable alphabet. %The second part follows since there exist only countably many ultimately periodic sequences over any given countable alphabet.

\vskip .2cm
For (iii) let $\mathbf u_1$ and $\mathbf u_2$ be two loops of $x$ at $y$ with $\mathbf u_1 \nsim \mathbf u_2$. Consider $\omega \in \{0,1\}^{\mathbb{N}}$ satisfying \eqref{e:loopdef}, i.e., such that 
\[\omega_{n+1}^{n+r} = \mathbf{u}_1, \quad \text{and} \quad T_{\omega,c}^n(x)=T_{\omega,c}^{n+r}(x)= y,\]
for some $n,r \in \mathbb{N}$. Now take any sequence $(\ell_j)_{j \ge 1} \subseteq \mathbb N^{\mathbb N}$ and consider the path $\tilde{\omega} \in \{0,1\}^{\mathbb{N}}$ defined by the concatenation
\[\tilde{\omega}= \omega_1^n \mathbf{u}_1^{\ell_1} \mathbf u_2^{\ell_2} \mathbf{u}_1^{\ell_3} \mathbf{u}_2^{\ell_4} \mathbf u_1^{\ell_5} \mathbf u_2^{\ell_6} \ldots  \ .\]
If $(\ell_j)$ is not ultimately periodic, then it is guaranteed by \eqref{q:sameLdigits} that the sequence $(s_i(\tilde{\omega},x), d_i(\tilde{\omega},x))_{i \in \mathbb{N}}$ is not ultimately periodic and as a result $(\tilde{\omega},x)$ presents a $c$-L\"uroth expansion that is not ultimately periodic. Since there are uncountably many such sequences $(\ell_j)$ each yielding a different corresponding sequence of signs and digits, the first part of the statement follows. Taking any ultimately periodic sequence $(\ell_j)$ instead will yield an ultimately periodic $c$-L\"uroth expansion.
\end{proof}

\begin{proposition}\label{p:iinfinite}
Let $c \in \big[ 0, \frac12 \big]$. If $x \in [c,1] \setminus \mathbb{Q}$, then any $c$-L\"uroth expansion of $x$ is infinite and not ultimately periodic. 
\end{proposition}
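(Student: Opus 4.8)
The plan is to split the statement into its two assertions and dispatch each using the affine structure of the branch maps. Infiniteness is essentially already recorded: since $x \in [c,1]\setminus\mathbb Q$ we have $x \neq 0$, and because $T_{j,c}(y)\neq 0$ for every $j,c$ and every $y \neq 0$ (the observation underlying Remark~\ref{r:notintersection}(i)), the orbit $(T_{\omega,c}^i(x))_{i\ge0}$ stays in $(0,1]$. Hence every digit $d_i(\omega,x)$ is finite ($d_i \ge 2$) and the sum in \eqref{d:clurothe} genuinely runs over all $n \ge 1$, so each $c$-L\"uroth expansion of $x$ is infinite. The whole remaining content is therefore the non-periodicity claim.

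For ultimate periodicity I would argue by contradiction, using the characterisation stated just before \eqref{d:clurothe}: if some $c$-L\"uroth expansion of $(\omega,x)$ were ultimately periodic, there would exist $n\ge0$ and $r\ge1$ with $T_{\omega,c}^{n+j}(x)=T_{\omega,c}^{n+r+j}(x)$ for all $j\ge0$; in particular $T_{\omega,c}^{n}(x)=T_{\omega,c}^{n+r}(x)$. The key structural input is that each branch acts affinely with rational coefficients: by the recursion preceding \eqref{d:clurothe}, each step sends $t$ to $(-1)^{s_i}d_i(d_i-1)\,t+(-1)^{s_i+1}(d_i-1+s_i)$, i.e.\ an affine map $t \mapsto \alpha t + \beta$ with $\alpha = (-1)^{s_i} d_i(d_i-1)$ and $\alpha,\beta\in\mathbb Q$.

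I would then compose the $r$ steps of the repeating block. Writing $y := T_{\omega,c}^{n}(x)$, the composition of the branch maps encountered between times $n$ and $n+r$ is, along this orbit, an affine map $y\mapsto Ay+B$ with $B\in\mathbb Q$ and $A=\prod_{i=n+1}^{n+r}(-1)^{s_i}d_i(d_i-1)$, so that $|A|\ge 2^{r}\ge 2$ since every factor satisfies $d_i(d_i-1)\ge 2$. The coincidence $y = T_{\omega,c}^{n+r}(x)=Ay+B$ then gives $(1-A)y=B$, and because $A\neq1$ this forces $y=B/(1-A)\in\mathbb Q$. Finally, $y=T_{\omega,c}^{n}(x)=A_n x+B_n$ for rationals $A_n=\pm\prod_{i=1}^{n}d_i(d_i-1)\neq0$ and $B_n$, whence $x=(y-B_n)/A_n\in\mathbb Q$, contradicting $x\notin\mathbb Q$. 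This contradiction rules out any ultimately periodic $c$-L\"uroth expansion of $x$.

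I expect no serious obstacle here: the argument is a direct exploitation of the affine-with-rational-coefficients structure, and the only point requiring a little care is the observation that $|A|\ge2$ excludes $A=1$, which is exactly what makes the fixed-point equation $(1-A)y=B$ solvable and produces a rational $y$. The remaining bookkeeping is to confirm that all intercepts stay rational and that the leading coefficients never vanish, both of which are immediate from $d_i\ge2$.
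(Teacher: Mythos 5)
Your proposal is correct and follows essentially the same route as the paper: both reduce ultimate periodicity to the coincidence $T_{\omega,c}^{n}(x)=T_{\omega,c}^{n+r}(x)$ and exploit the fact that the iterates are affine in $x$ with rational (indeed integer) coefficients to force $x\in\mathbb Q$. Your version is in fact slightly more careful than the paper's, which writes $ax+b=cx+d$ and concludes $x\in\mathbb Q$ without explicitly noting that $a\neq c$; your observation that the leading coefficient of the $r$-step block has modulus at least $2^{r}\ge 2$ supplies exactly the missing non-degeneracy.
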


\begin{proof}
Let $x \in [c,1] \setminus \mathbb{Q}$ be given and assume that there exists an $\omega \in \{0,1\}^{\mathbb{N}}$ such that the corresponding $c$-L\"uroth expansion of $(\omega,x)$ is ultimately periodic. Then there is an $n \ge 0$ and an $r \ge 1$ such that $T_{\omega,c}^n(x)=T_{\omega,c}^{n+r}(x)$. Hence, there are $a, b, c, d \in \mathbb{Z}$ such that $ax+b= cx+d$, implying that $x \in \mathbb{Q}$, contradicting the choice of $x$.
\end{proof}

\begin{example}\label{e:rationalexp}
To illustrate Proposition~\ref{p:loop} we give an example of the various possibilities for periodicity of expansions of rational numbers. Let $c=\frac13$ and first consider the rational number $\frac67$. See Figure \ref{f:L13}(a) for the random map $L_{\frac13}$  with the possible orbits of $\frac67$. Figure \ref{f:L13}(b) is a visualisation of the random orbits of $\frac67$. We explicitly identify paths $\omega \in \{0,1\}^{\mathbb{N}}$ that produce $c$-L\"uroth expansions of $\frac67$ that are periodic, ultimately periodic and not ultimately periodic and list them in Table \ref{t:rationalomega}. 

\begin{figure}[h!]
\centering
\subfigure[]{
\begin{tikzpicture}[scale=5]
\draw(.33,.33)node[below]{\small $\frac13$}--(.5,.33)node[below]{\small $\frac12$}--(1,.33)node[below]{\small $1$}--(1,1)node[right]{\small $1$}--(.33,1)--(.33,.33);
\draw[line width=0.2mm, black](.33,1)--(.44,.33)(.5,1)--(.83,.33);
\draw[line width=0.2mm, black](.38,.33)--(.5,1)(.66,.33)--(1,1);
\draw[dotted](.38,.33)--(.38,1)(.44,.33)--(.44,1)(.66,.33)--(.66,1)(.83,.33)--(.83,1)(.5,.33)--(.5,1)(.33,.66)--(1,.66);
\draw[dashed, line width=0.2mm, red](.857,.33)--(.857,.857)(.857,.714)--(.714,.714)(.714,.714)--(.714,.33)(.714,.33)--(.714,.428)(.714,.428)--(.428,.428)(.714,.571)--(.428,.571)(.571,.571)--(.571,.33)(.428,.571)--(.428,.33)(.571,.571)--(.571,.857)(.571,.857)--(.857,.857);
\draw[dashed, line width=0.2mm, black](.33,.33)--(1,1);
\filldraw[red] (.857,.714) circle (.2pt);
\filldraw[red] (.714,.428) circle (.2pt);
\filldraw[red] (.714,.571) circle (.2pt);
\filldraw[red] (.428,.428) circle (.2pt);
\filldraw[red] (.428,.571) circle (.2pt);
\filldraw[red] (.571,.857) circle (.2pt);
\draw (.857,.33) node[below] {\small $\frac67$};
\draw (.714,.33) node[below] {\small $\frac57$};
\draw (.571,.33) node[below] {\small $\frac47$};
\draw (.428,.33) node[below] {\small $\frac37$};
\end{tikzpicture}}
\hspace{1cm}
\subfigure[]{
\resizebox{.25\textwidth}{!}{
\begin{tikzpicture}[->,>=stealth',shorten >=1pt,auto,node distance=2cm,semithick]
%->,>=stealth',shorten >=1pt,auto,node distance=2.4cm, semithick
\tikzstyle{capri}=[rectangle,fill=none,draw=capri,text=black]
\tikzstyle{babyblueeyes}=[rectangle,fill=none,draw=babyblueeyes,text=black]
\tikzstyle{brightpink}=[rectangle,fill=none,draw=brightpink,text=black]
\tikzstyle{green}=[rectangle,fill=none,draw=green,text=black]

\node[capri](A){\footnotesize $\frac67$};
\node[](B)[right=0.7cm of A]{\footnotesize $\frac57$};
\node[green](C)[right=0.7cm of B]{\footnotesize $\frac37$};
%\node[brightpink](D)[right=0.7cm of C]{\footnotesize $\frac47$};  
\node[brightpink](E)[below=0.7cm of B]{\footnotesize $\frac47$}; 
\node[white](F)[below=0.25cm of E]{}; 
%\node[green](G)[below=0.7cm of C]{\footnotesize $\frac37$}; 
%\node[capri](H)[right=0.7cm of D]{\footnotesize $\frac67$};

\draw[->] (C) to[loop above] node[above]{\scriptsize $1$}(C);

\path (A) edge node{}(B)
        (B) edge node{\scriptsize $0$}(C)
 %       (C) edge node{\footnotesize $0$}(D)
        (B) edge node{\scriptsize $1$}(E)
        (E) edge node{}(A)
 %       (D) edge node{}(H)
        (C) edge node{\scriptsize $0$}(E);
        \end{tikzpicture}}}

\caption{The random L\"uroth map $L_c$ for $c=\frac13$ with the random orbits of $\frac67$ in red in (a) and another visualisation of the orbits of $\frac67$ in (b). The digits with the arrows indicate which one of the maps $T_{0,c}$ or $T_{1,c}$ is applied. If there is no digit, then both maps yield the same orbit point.}
\label{f:L13}
\end{figure}
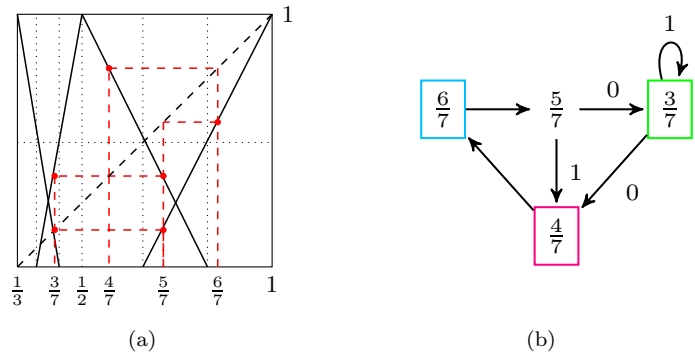

\begin{table}[h!]
\centering
\renewcommand{\arraystretch}{2}
\begin{tabular}{l | l}
$\boldsymbol{\omega}$ & \textbf{Expansion of } $\frac67$\\
\hline
 $(011)^{\infty}$  & $((0,2),(1,2)^2)^\infty$ is periodic\\
 $001^{\infty}$  & $((0,2)^2,(1,3)^\infty)$ is ultimately periodic\\
 $0^210^41^20^41^30^4 1^4 \ldots$ & $((0,2)^2,(1,3),(0,3),(1,2),(0,2)^2,(1,3)^2,(0,3),(1,2),(0,2)^2, (1,3)^3, \ldots)$\\
 & is not ultimately periodic
 \end{tabular}
 \caption{Examples of $\omega$'s and the corresponding type of the $c$-L\"uroth expansions.}
 \label{t:rationalomega}
\end{table}

For the point $\frac34$ it holds that for any $\omega \in \{0,1\}^\mathbb N$, $T_{\omega,c}^1\big(\frac34\big) = \frac12$ and $T_{\omega,c}^n\big(\frac34\big) = 1$ for any $n \ge 2$. Hence, $\frac34$ has precisely two $c$-L\"uroth expansions (for any $c$) that are given by the sequences
\[ ((0,2),(1,2),(0,2)^\infty) \quad \text{ and } \quad ((1,2)^2,(0,2)^\infty).\]
Hence in case (ii) of Proposition~\ref{p:loop} there are rational $x$ that have only a finite number of ultimately periodic expansions and we cannot improve on the statement without giving a further description of the specific positions of the random orbit points.
\end{example}

\begin{remark}
Lemma \ref{p:returningdense} gives a further bound on the number of admissible digits $(d_n)_{n \in \mathbb{N}}$ in the $c$-L\"uroth expansions of a rational number $x$. More precisely, if $c=0$, any $c$-L\"uroth expansion of $x=\frac{P}{Q} \in \mathbb{Q} \cap [0,1]$ has at most $Q+1$ different digits. Differently, by Proposition \ref{p:iinfinite}, for irrational numbers the set of admissible digits is $\mathbb{N}_{\geq 2}$. Note that for $ \frac{1}{\ell +1} \leq c < \frac{1}{\ell}$, the bound is given by the minimum between the previous quantities and $\ell$.
\end{remark}

The proof of Theorem~\ref{t:main1} is now given by Proposition~\ref{p:loop} and Proposition~\ref{p:iinfinite}.

\section{Approximations of irrationals}\label{s:analysisapprox}
The approximation properties of $c$-L\"uroth expansions can be studied via the dynamical properties of the associated random system. For this one needs to have an accurate description of an invariant measure for the random system. Let $0 <p<1$. The vector $(p,1-p)$ represents the probabilities with which we apply the maps $T_{0,c}$ and $T_{1,c}$ respectively. One easily check that the probability measure $m_p \times \lambda$, where $m_p$ is the $(p,1-p)$-Bernoulli measure on $\{0,1\}^{\mathbb{N}}$ and $\lambda$ is the one-dimensional Lebesgue measure on $[0,1]$ is invariant and ergodic for $L_0$. Therefore, in this section we focus on $c=0$, fix a $p$ and drop the subscripts $c,p$, so we write $L = L_{0,p}$. We first prove a result on the number of different L\"uroth expansions that $L$ produces for Lebesgue almost all $x \in [0,1]$ and then investigate two ways of quantifying the approximation properties of all these expansions.

\subsection{Universal generalised L\"uroth expansions}
The random dynamical system $L=L_{0,p}$ is capable of producing for each number $x \in [0,1]$ essentially all expansions generated by all the members of the family of GLS transformations studied in \cite{BaBu}, i.e., GLS transformations with \emph{standard L\"uroth partition}, given by $\mathcal P_L = \big\{ \big[ \frac1{n}, \frac{1}{n-1} \big) \big\}_{n \ge 2}$. In the previous section we mentioned that Lebesgue almost every $x$ has uncountably many different $0$-L\"uroth expansions and thus uncountably many different generalised L\"uroth expansions. Here we prove an even stronger statement. 

\vskip .2cm
Let $\mathcal A = \{ (s,d) \, : \, s\in \{0,1\}, \, d \in \mathbb N_{\ge 2} \}$ be the alphabet of possible digits for generalised L\"uroth expansions. Define the map $\psi: \mathcal A^\mathbb N \to (0,1]$ by
\begin{equation}\label{q:psi}
\psi \big( ((s_n,d_n))_{n \ge 1} \big) =  \sum_{n=1}^{\infty} (-1)^{\sum_{i=1}^{n-1} s_i} \frac{d_n-1 + s_n}{\prod_{i=1}^{n} d_i(d_i-1)}
\end{equation}
To show that this map is well-defined and surjective, set for any $(s,d) \in \mathcal A$
\[ \bar \Delta (s,d) = [s] \times \Big[ \frac1d, \frac{1}{d-1} \Big],\]
for the cylinder set $[s]\subseteq  \{0,1\}^\mathbb N$ and the interval $\big[ \frac1d, \frac1{d-1} \big] \subseteq [0,1]$. Then $m_p \times \lambda (\bar \Delta(s,d)) = \frac{(-1)^s(p-s)}{d(d-1)}>0$. Take any sequence $((s_n,d_n))_n \in \mathcal A^\mathbb N$ that does not end in $((0,d+1),(0,2)^\infty)$ for some $d \geq 2$. The set
\begin{equation}\label{q:limitcylinder}
\lim_{n \to \infty}\bar \Delta(s_1,d_1) \cap L^{-1}\bar \Delta(s_2,d_2) \cap \cdots \cap L^{-(n-1)} \bar \Delta(s_n,d_n) \subseteq \{0,1\}^\mathbb N \times [0,1]
\end{equation}
is non-empty as a countable intersection of closed sets. Moreover,
\[ \lim_{n \to \infty} m_p \times \lambda \big(\bar  \Delta(s_1,d_1) \cap L^{-1}\bar \Delta(s_2,d_2) \cap \cdots \cap  L^{-(n-1)} \bar \Delta(s_n,d_n) \big) = \lim_{n \to \infty} \frac{p^{n-\sum_{i=1}^n s_i} (1-p)^{\sum_{i=1}^n s_i}}{\prod_{i=1}^n d_i(d_i-1)}=0,\]
so the set from \eqref{q:limitcylinder} consists of precisely one point, call it $(\omega,x)$. Then $\omega = (s_n)_n \in \{0,1\}^\mathbb N$ and by the assumption that $((s_n,d_n))_n$ does not end in $((0,d+1),(0,2)^\infty)$ it follows that $s_n(\omega,x) = s_n$ and $d_n(\omega,x)=d_n$ for all $n \ge 1$, so that
\[ x = \sum_{n=1}^{\infty} (-1)^{\sum_{i=1}^{n-1} s_i} \frac{d_n-1 + s_n}{\prod_{i=1}^{n} d_i(d_i-1)} \in [0,1].\]
One easily checks that for any $d \ge 2$,
\[ \psi \big( ((0,d+1),(0,2)^\infty) \big)= \psi \big( ((1,d),(0,2)^\infty) \big).\]
%Note that if there is a $k \ge 1$, such that $(s_k,d_k) = (0,d+1)$ and  $(s_n,d_n)=(0,2)$ for all $n \ge k+1$, then
%\[ \begin{split}
%\sum_{n=1}^{\infty} (-1)^{\sum_{i=1}^{n-1} s_i} \frac{d_n-1 + s_n}{\prod_{i=1}^{n} d_i(d_i-1)} =\, & \sum_{n=1}^{k-1}(-1)^{\sum_{i=1}^{n-1} s_i} \frac{d_n-1 + s_n}{\prod_{i=1}^{n} d_i(d_i-1)}\\
%& + \frac{(-1)^{\sum_{i=1}^{k-1}s_i}}{\prod_{i=1}^{k-1}d_i(d_i-1)} \Big( \frac{d}{d+1} + \frac1{(d+1)d} \sum_{j \ge 1} \frac1{\prod_{i=1}^j 2 \cdot 1} \Big)\\
%=\, & \sum_{n=1}^{k-1}(-1)^{\sum_{i=1}^{n-1} s_i} \frac{d_n-1 + s_n}{\prod_{i=1}^{n} d_i(d_i-1)} + \frac{(-1)^{\sum_{i=1}^{k-1}s_i}}{d\prod_{i=1}^{k-1}d_i(d_i-1)}\\
%=\, & \sum_{n=1}^{k-1}(-1)^{\sum_{i=1}^{n-1} s_i} \frac{d_n-1 + s_n}{\prod_{i=1}^{n} d_i(d_i-1)}\\
%& + \frac{(-1)^{\sum_{i=1}^{k-1}s_i}}{\prod_{i=1}^{k-1}d_i(d_i-1)} \Big( \frac{d}{d-1} - \frac1{d(d-1)} \sum_{j \ge 1} \frac1{\prod_{i=1}^j 2 \cdot 1} \Big).
%\end{split}\]
%In other words, the sequences $((0,d+1),(0,2)^\infty)$ and $((1,d),(0,2)^\infty)$ correspond to the same number $x$. Therefore, the map $\psi: \mathcal A^\mathbb N \to (0,1]$ given by
%\begin{equation}\label{q:psi}
%\psi \big( ((s_n,d_n))_{n \ge 1} \big) =  \sum_{n=1}^{\infty} (-1)^{\sum_{i=1}^{n-1} s_i} \frac{d_n-1 + s_n}{\prod_{i=1}^{n} d_i(d_i-1)}
%\end{equation}
Therefore, $\psi$ is well defined and surjective and by the way we defined the maps $T_{0,0}$ and $T_{1,0}$ on any of the points $z_n$ we get that any sequence $ ((s_n,d_n))_n \in \mathcal A^\mathbb N$ that does not end in $((0,d+1),(0,2)^\infty)$ corresponds to a 0-L\"uroth expansion of a number $x \in (0,1]$. We call a sequence $((s_n,d_n))_n \in \mathcal A^\mathbb N$ with $\psi \big( ((s_n,d_n))_{n \ge 1} \big) =x$ a {\em universal} 0-L\"uroth expansion of $x$ if every finite block $(t_1,b_1), \ldots, (t_j,b_j) \in \mathcal A^j$ occurs in the sequence $((s_n,d_n))_n$. Note that the sequences ending in $((0,d+1),(0,2)^\infty)$ and $((1,d),(0,2)^\infty)$ can not be universal.

\begin{proposition}\label{p:universal0}
Lebesgue almost all $x \in [0,1]$ have uncountably many universal 0-L\"uroth expansions.
\end{proposition}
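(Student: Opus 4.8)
The plan is to combine the ergodicity of $m_p \times \lambda$ under $L = L_{0,p}$ with Fubini's theorem, exploiting the special feature that for $c=0$ the sign sequence is \emph{exactly} the driving sequence, $s_n(\omega,x)=\omega_n$.

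First I would fix a finite block $B = (t_1,b_1),\ldots,(t_j,b_j) \in \mathcal A^j$ and let $C_B \subseteq \{0,1\}^\mathbb N \times [0,1]$ denote the set of pairs $(\omega,x)$ whose first $j$ digit-pairs equal $B$, so that
\[ C_B = \bar\Delta(t_1,b_1)\cap L^{-1}\bar\Delta(t_2,b_2)\cap\cdots\cap L^{-(j-1)}\bar\Delta(t_j,b_j). \]
The computation already carried out in the text gives $(m_p\times\lambda)(C_B) = \frac{p^{\,j-\sum_i t_i}(1-p)^{\sum_i t_i}}{\prod_{i} b_i(b_i-1)} > 0$, since $0<p<1$. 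The block $B$ occurs in the expansion of $(\omega,x)$ at position $k$ precisely when $L^k(\omega,x)\in C_B$. Applying Birkhoff's ergodic theorem to $\mathbbm 1_{C_B}$ and using ergodicity of $m_p\times\lambda$, for $(m_p\times\lambda)$-almost every $(\omega,x)$ the forward orbit visits $C_B$ with asymptotic frequency $(m_p\times\lambda)(C_B)>0$, hence infinitely often; in particular $B$ occurs in the expansion $(s_n,d_n)(\omega,x)$. As there are only countably many finite blocks $B$, intersecting the corresponding full-measure sets produces a single full-measure set $G$ on which the expansion is universal.

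Next I would transfer this to a statement about $x$ by passing to fibres. Writing $G_x = \{\omega : (\omega,x)\in G\}$ for the $x$-section of $G$, Fubini's theorem and $(m_p\times\lambda)(G)=1$ give $m_p(G_x)=1$ for $\lambda$-almost every $x$. For each such $x$ the set $G_x$ is uncountable, because the Bernoulli measure $m_p$ is non-atomic when $0<p<1$. Finally, since $s_n(\omega,x)=\omega_n$ for $c=0$, distinct $\omega\in G_x$ produce distinct sign sequences, hence genuinely distinct digit sequences $(s_n,d_n)_n$; each of these is a universal $0$-L\"uroth expansion of $x$ by \eqref{d:clurothe}. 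Therefore $\lambda$-almost every $x$ has uncountably many universal $0$-L\"uroth expansions.

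The argument is essentially routine once ergodicity is invoked, so the only point demanding care is the \emph{faithfulness} of the correspondence $\omega \mapsto \text{expansion}$: I must know that distinct driving sequences yield genuinely distinct expansions of the same $x$. For $c=0$ this is immediate from $s_n=\omega_n$, so no geometric analysis of when two random orbits separate (as will be needed for $c>0$ in Section~\ref{s:finitea}) is required here. One should also observe that the $\psi$-identification of sequences ending in $((0,d+1),(0,2)^\infty)$ with those ending in $((1,d),(0,2)^\infty)$ cannot collapse any of the expansions found, since such tails are eventually periodic and omit all but finitely many blocks, hence are not universal.
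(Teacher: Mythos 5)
Your proof is correct and follows essentially the same route as the paper's: Birkhoff's ergodic theorem applied to the countably many digit-block cylinders, Fubini to pass to fibres, non-atomicity of $m_p$ for uncountability, and the identity $s_n(\omega,x)=\omega_n$ to distinguish the expansions arising from different $\omega$. The only cosmetic difference is that the paper works with open cylinders $\Delta(s,d)$ and first excises the set $N$ of points whose random orbit hits some $z_n$ (so that membership in a cylinder corresponds exactly to occurrence of the block and the sign/digit sequences are unambiguous); since $N\subseteq\mathbb{Q}$ is Lebesgue-null, this does not affect the almost-everywhere conclusion.
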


\begin{proof}
Define the set
\[ N = \{ x \in (0,1] \, : \, \exists \, \omega \in \{0,1\}^\mathbb N, \, k\ge 0,\, n\ge 2, \, \, T_\omega^k (x) = z_n\}.\]
Then by Theorem~\ref{t:main1}, $N \subseteq \mathbb Q$, so $m_p \times \lambda (\{0,1\}^\mathbb N \times N)=0$. For any $(i,j) \in \mathcal A$ set
\[ \Delta (s,d) = [s] \times \Big( \frac1d, \frac1{d-1} \Big).\]
For any $(\omega,x) \in \{ 0,1\}^\mathbb N \times (0,1] \setminus N$ the block $(t_1,b_1), \ldots, (t_j,b_j)$ occurs in position $k \ge 1$ of the sequence $ ((s_n(\omega,x),d_n(\omega,x)))_n $ precisely if
\[ L^{k-1} (\omega,x) \in \Delta(t_1,b_1) \cap L^{-1} \Delta(t_2,b_2) \cap \cdots \cap L^{-(j-1)} \Delta(t_j,b_j).\]
Since $m_p \times \lambda \big( \Delta(t_1,b_1) \cap L^{-1}\Delta(t_2,b_2) \cap \cdots \cap L^{-(j-1)} \Delta(t_j,b_j) \big) >0$ it follows from the ergodicity of $L$ that $m_p \times \lambda$-a.e.~$(\omega,x)$ eventually enters this set, so the set of points $(\omega,x)$ for which the 0-L\"uroth expansion does not contain $(t_1,b_1), \ldots, (t_j,b_j)$ is a $m_p \times \lambda$-null set. There are only countably many different blocks $(t_1,b_1), \ldots, (t_j,b_j)$, so the set of points $(\omega,x)$ that have a non-universal 0-L\"uroth expansion also has measure 0. From Fubini's Theorem we get the existence of a set $B \subseteq (0,1]\setminus N$ with $\lambda(B)=1$ with the property that for each $x \in B$ a set $A_x \subseteq \{0,1\}^\mathbb N$ exists with $m_p (A_x) =1$ and such that for any $(\omega,x) \in A_x \times \{x\}$ the sequence $(s_n(\omega,x),d_n(\omega,x))_n$ is a universal 0-L\"uroth expansion of $x$. The set $A_x$ has full measure, so contains uncountably many $\omega$'s. Let $x \in B$ and $\omega, \tilde \omega \in A_x$. Then $x \not \in N$ and thus if $\omega_n \neq \tilde \omega_n$ for some $n \ge 1$, then $s_n(\omega,x) \neq s_n(\tilde \omega,x)$. Hence, the sequences $\omega \in A_x$ all give different universal 0-L\"uroth expansions for $x$.
\end{proof}

%Note that if $x$ has a universal 0-L\"uroth expansion, then $x \not \in \big\{ \frac{2n-1}{2n(n-1)} \, : \, n \ge 1 \big\}$. By Remark~\ref{r:notintersection}(iii) the fact that $s_n(\omega,x) \neq s_n(\tilde \omega,x)$ for some $n$ then implies that $d_{n+1}(\omega,x) \neq d_{n+1}(\tilde \omega,x)$.

\subsection{Speed of convergence}

Recall that for each $n \geq 1$ the $n$-th convergent $\frac{p_{n}}{q_{n}} (\omega,x)$ of $(\omega, x)$ is given by
\[\frac{p_n}{q_n}=\frac{p_{n}}{q_{n}} (\omega,x)= \sum_{k = 1}^n (-1)^{\sum_{i=1}^{n-1} \omega _i} \prod_{i=1}^k \frac{d_k-1+\omega_k}{d_i(d_i-1)},\]
so that the study of the quantity $\big| x - \frac{p_{n}}{q_{n}} \big|$ provides information on the quality of the approximations of $x$ obtained from $L$. In the following, we take two different perspectives. In this section we compute the pointwise Lyapunov exponent $\Lambda$ and in the next section we consider the approximation coefficients $\theta_n= \theta_n^{0,p}$ similar to the ones defined in \eqref{q:appcf}.\\

Let $I$ be an interval of the real line, $\Omega \subseteq \mathbb{N}$ and $\pi : \Omega^{\mathbb N}\times I \to I$ the canonical projection onto the second coordinate. The following definition can be found in \cite{GBL, Be}, for example.

\begin{definition}[Lyapunov exponent\index{Lyapunov exponent}]
For any random interval map $R: \Omega^{\mathbb{N}} \times I \to \Omega^{\mathbb{N}} \times I$ and for any point $(\omega, x) \in \Omega^{\mathbb{N}} \times I$, the \emph{pointwise Lyapunov exponent} is defined as
\begin{equation}\label{d:Lyap}
\Lambda (\omega, x) := \lim_{n \rightarrow \infty} \frac1n  \log \abs{ \frac{\de}{\de x} \pi (R^n(\omega, x))} ,
\end{equation}
whenever the limit exists.
\end{definition}

We use this to determine the speed of convergence of the sequence $\big( \frac{p_n}{q_n}(\omega,x) \big)_n$ to a number $x \in [0,1]$ for which $T_\omega^n (x) \neq 0$ for all $n\ge 0$. By Proposition~\ref{p:loop} and Proposition~\ref{p:iinfinite} this includes all irrational $x$ and part of the rationals. For any such $x$ and each sequence $((s_n(\omega,x),d_n(\omega,x))_{n \ge 1} \in \mathcal A^\mathbb N$ of signs and digits the rational $\frac{p_n}{q_n}(\omega,x)$ is one of the endpoints of the projection onto the unit interval of the cylinder $[(s_1,d_1), \ldots, (s_n,d_n)]$, i.e., one of the endpoints of the interval
\[ \psi ( [(s_1,d_1), \ldots, (s_n,d_n)]).\]
Since the map $T_\omega^n$  is surjective and linear on this interval and $\frac{\de}{\de x}\, T_\omega^n (x)= \prod_{k=1}^n d_k(d_k-1)$,
the Lebesgue measure of this interval is $\big( \prod_{k=1}^n d_k(d_k-1) \big)^{-1}$. The following result compares to Theorem~\ref{t:bi} from \cite{BI09}.
\begin{proposition}\label{p:le}
For any $(\omega, x) \in \{0,1\}^{\mathbb{N}} \times [0,1]$ with $T^n_\omega(x) \neq 0$ for all $n \ge 0$ the speed of approximation of the random L\"uroth map $L$ is given by
\[\abs{x- \frac{p_{n}}{q_{n}} (\omega,x)} \asymp \exp (-n \Lambda (\omega, x)). \]
In particular, for $m_p \times \lambda$-a.e.~$(\omega, x)$
\[\Lambda (\omega, x) = \sum_{d=2}^{\infty} \frac{\log (d (d-1))}{d (d-1)} \cong 1.98329 \ldots \ . \]
Furthermore, the map $\Lambda: \{0,1\}^{\mathbb{N}} \times [0,1] \to [ \log 2, \infty ) $ is onto. 
\end{proposition}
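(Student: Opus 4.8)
The plan is to first pin down the exact relation between the approximation error and the orbit, then obtain the almost-everywhere value by ergodicity and the surjectivity by an explicit construction.

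First I would establish the identity
\[
\left| x - \frac{p_n}{q_n}(\omega,x) \right| = \frac{T_\omega^n(x)}{\prod_{i=1}^n d_i(d_i-1)}.
\]
This follows by inverting the one-step relation $T_\omega^i(x) = (-1)^{s_i} d_i(d_i-1) T_\omega^{i-1}(x) + (-1)^{s_i+1}(d_i - 1 + s_i)$ and iterating: solving for $T_\omega^{i-1}(x)$ and substituting repeatedly writes $x$ as the partial sum $\frac{p_n}{q_n}$ plus the tail $(-1)^{\sum_{i=1}^n s_i} T_\omega^n(x)/\prod_{i=1}^n d_i(d_i-1)$, which is precisely the remainder. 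Since $\abs{\frac{\de}{\de x}\pi(L^n(\omega,x))} = \prod_{i=1}^n d_i(d_i-1)$ and $\frac{1}{d_{n+1}} \le T_\omega^n(x) \le 1$ (using $T_\omega^n(x)\neq 0$), the error and $\big(\prod_{i=1}^n d_i(d_i-1)\big)^{-1}$ agree up to a factor in $[\tfrac1{d_{n+1}},1]$, hence share the same exponential rate; this gives the claimed $\asymp$ and shows $\Lambda(\omega,x) = \lim_n \frac1n \sum_{i=1}^n \log(d_i(d_i-1))$ whenever this limit exists.

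For the almost-everywhere value I would realise $\Lambda$ as a Birkhoff average. Set $f(\omega,x) = \log\abs{\frac{\de}{\de x} T_{\omega_1,0}(x)} = \log\!\big(d_1(\omega,x)(d_1(\omega,x)-1)\big)$, so that $\Lambda(\omega,x) = \lim_n \frac1n \sum_{k=0}^{n-1} f(L^k(\omega,x))$. Since $m_p\times\lambda$ is invariant and ergodic for $L = L_{0,p}$ and $f \in L^1$ (as $\sum_{d\ge 2}\frac{\log(d(d-1))}{d(d-1)} < \infty$, because $\log(d(d-1)) = O(\log d)$), Birkhoff's Ergodic Theorem gives $\Lambda(\omega,x) = \int f\, \de(m_p\times\lambda)$ for $m_p\times\lambda$-a.e.\ $(\omega,x)$. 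As $\abs{T_L'} = \abs{T_A'}$ on every branch, $f$ is independent of $\omega_1$, so integrating out $m_p$ and using $\lambda\big(\big[\tfrac1d,\tfrac1{d-1}\big)\big) = \frac1{d(d-1)}$ yields $\int f = \sum_{d\ge 2}\frac{\log(d(d-1))}{d(d-1)}$, the stated value.

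For surjectivity, the lower bound $\Lambda \ge \log 2$ is immediate from $d_i \ge 2$, and $\log 2$ is attained at $x=1$, where every digit equals $2$. For a target $v > \log 2$ I would build a digit sequence $(d_k)$ taking only the two values $d=2$ (contributing $\log 2$) and a fixed large $D$ with $\log(D(D-1)) > v$, choosing at each step the smaller or larger digit according to whether the running average $\frac1n\sum_{k=1}^n \log(d_k(d_k-1))$ currently lies above or below $v$; since each new term perturbs the average by $O(1/n)$, this greedy scheme forces the average to converge to $v$. Taking $\omega = 0^\infty$ and $x = \psi\big(((0,d_k))_k\big)$ — which realises this digit sequence by surjectivity of $\psi$ and satisfies $T_\omega^n(x)\neq 0$ for all $n$ — gives a point with $\Lambda(\omega,x) = v$. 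The main obstacle is this last step: one must verify that the running average genuinely converges rather than merely oscillating around $v$, which is exactly where the $O(1/n)$ step-size control is essential; a secondary technical point is justifying the identity above, in particular identifying $\frac{p_n}{q_n}$ with the correct endpoint of the cylinder interval $\psi([(s_1,d_1),\dots,(s_n,d_n)])$.
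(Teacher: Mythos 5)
Your argument is correct and, for the first two assertions, essentially identical to the paper's: the same identity $\abs{x-\frac{p_n}{q_n}(\omega,x)}=T_\omega^n(x)\big(\prod_{i=1}^n d_i(d_i-1)\big)^{-1}$ combined with $T_\omega^n(x)\in\big[\frac{1}{d_{n+1}},1\big]$ gives the two-sided bound, and Birkhoff's Ergodic Theorem applied to $f(\omega,x)=\log(d_1(d_1-1))$ with the invariant ergodic measure $m_p\times\lambda$ gives the almost-everywhere value. (Both your write-up and the paper's leave implicit that the step from ``error $\asymp$ $\big(\prod_{i=1}^n d_i(d_i-1)\big)^{-1}$ up to a factor in $[\frac{1}{d_{n+1}},1]$'' to ``same exponential rate'' uses $\frac1n\log d_{n+1}\to 0$, which is automatic whenever the Ces\`aro limit $\Lambda(\omega,x)$ exists, so this is not a gap.) The one place you genuinely diverge is the surjectivity of $\Lambda$ onto $[\log 2,\infty)$: the paper simply observes that $\omega=(\overline{0})$ reduces $L$ to the deterministic map $T_L$ and imports the range statement from Theorem~\ref{t:bi} of \cite{BI09}, whereas you build explicit points by a greedy two-digit scheme forcing the running average of $\log(d_k(d_k-1))$ to converge to any prescribed $v>\log 2$. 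Your construction is self-contained and the convergence does go through (after the first crossing of $v$ one has $\abs{A_n-v}\le M/n$ permanently, and since $D$ occurs infinitely often the resulting digit sequence is admissible with $T_\omega^n(x)\neq 0$ for all $n$), at the cost of a little more work; the paper's citation is shorter but relies on the external multifractal result.
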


\begin{proof}
Let $(\omega,x)$ be such that $T^n_\omega (x) \neq 0$ for each $n$. Write $s_n = s_n(\omega,x)$ and $d_n = d_n(\omega,x)$ for each $n \ge 1$. Since $\frac{p_n}{q_n}(\omega,x)$ is one of the endpoints of the interval $ \psi ( [(s_1,d_1), \ldots, (s_n,d_n)]) $ it follows that
\[ \abs{x- \frac{p_{n}}{q_{n}}(\omega,x)} \leq  \prod_{k=1}^n \frac1{d_k( \omega,x)(d_k( \omega,x)-1)} .\]
We claim that
\begin{equation}\label{q:pqn-1}
\abs{x- \frac{p_{n}}{q_{n}}(\omega,x)} \geq  \prod_{k=1}^{n+1} \frac1{d_k( \omega,x)(d_k( \omega,x)-1)}.
\end{equation}
To see this, note that for $n=1$ we have
\[ \frac1{d_2} \le T_\omega^1 (x) = (-1)^{s_1} d_1(d_1-1) x + (-1)^{s_1+1} (d_1-1+s_1) \le \frac1{d_2-1}.\]
Hence,
\[  \frac1{d_1(d_1-1)d_2} \le (-1)^{s_1} \Big( x - \frac{d_1-1+s_1}{d_1(d_1-1)} \Big) \le \frac1{d_1(d_1-1)(d_2-1)}.\]
Since $\frac{p_1}{q_1} = \frac{d_1-1+s_1}{d_1(d_1-1)}$ it follows that
\[ \abs{x- \frac{p_1}{q_1}} \ge \frac1{d_1(d_1-1)d_2(d_2-1)}.\]
In the same way, from 
\[ \frac1{d_{n+1}} \le T_\omega^n (x) = (-1)^{s_n} d_n(d_n-1) T_\omega^{n-1} (x) + (-1)^{s_n+1} (d_n-1+s_n) \le \frac1{d_{n+1}-1},\]
we obtain
\[ \abs{T_\omega^{n-1} (x)- \frac{d_n-1+s_n}{d_n(d_n-1)}} \ge \frac1{d_n(d_n-1)d_{n+1}(d_{n+1}-1)}.\]
By the definition of the convergents, we then have
\begin{equation}\nonumber
\begin{split} 
\abs{x- \frac{p_{n}}{q_{n}}(\omega,x)}  &= \frac{T_\omega^n(x)}{\prod_{i=1}^{n} d_i (d_i-1) }  \\[10pt]
&= \frac{(-1)^{s_n} d_n(d_n-1) T_\omega^{n-1} (x) + (-1)^{s_n+1} (d_n-1+s_n)}{\prod_{k=1}^{n} d_i (d_i-1)  } \\[10pt]
&= \abs{T_\omega^{n-1} (x)  -  \frac{(d_n-1+s_n)}{d_n(d_n-1)} } \cdot  \prod_{k=1}^{n-1}  \frac{1}{d_i (d_i-1)} \\[10pt]
& \ge \prod_{k=1}^{n+1} \frac1{ d_i(d_i-1)}.
\end{split}
\end{equation}
This gives the claim. 
%On the other hand, Proposition~\ref{p:iinfinite} gives that $|x -\frac{p_n}{q_n}(\omega,x)| >0$ for each $n$. Since
%\[ \lim_{n \to \infty} \prod_{k=1}^n \frac1{d_k( \omega,x)(d_k( \omega,x)-1)} =0,\]
%for each $n$ we can find an $m$ such that
%\[ \abs{x- \frac{p_{n}}{q_{n}}(\omega,x)} \geq  \prod_{k=1}^{n+m} \frac1{d_k( \omega,x)(d_k( \omega,x)-1)}.\]
It follows that
\begin{equation} \nonumber
\begin{split}
\lim_{n \rightarrow \infty} \frac1n \log \abs{x- \frac{p_{n}}{q_{n}}(\omega,x)} &= \lim_{n \rightarrow \infty} \frac1n \log \bigg( \prod_{k=1}^n \frac1{d_k( \omega,x)(d_k( \omega,x)-1)} \bigg)\\
%&= \lim_{n \rightarrow \infty} \frac1n \log \lambda \big( \psi ( [(s_1,d_1), \ldots, (s_n,d_n)])\big)\\
&=\lim_{n \rightarrow \infty} \frac1n \log \prod_{k=1}^{n} \lambda \big( \psi ([s_k,d_k])\big)\\
&=\lim_{n \rightarrow \infty} \frac1n \log \abs{\frac{\de}{\de x} \pi(L^n(\omega,x))}^{-1}\\
&=-\Lambda (\omega, x).
\end{split}
\end{equation}
So $\Lambda$ measures the asymptotic exponential growth of approximation, i.e., 
\[\abs{x- \frac{p_{n}}{q_{n}} (\omega,x)} \asymp \exp (-n \Lambda (x,\omega)).\]
Recall now that $m_p \times \lambda$ is invariant and ergodic for $L$. Applying Birkhoff's Ergodic Theorem we get for $(m_p \times \lambda)$-a.e.~$(\omega, x)$ that
\begin{equation}\label{e:lexp}
\begin{split}
\Lambda (\omega,x) = \lim_{n \rightarrow \infty} \frac1n \log \abs{ \frac{\de}{\de x} T_{\omega}^n (x) } &= \lim_{n \rightarrow \infty} \frac1n \log \abs{ \prod_{k=1}^{n} \frac{\de}{\de x} T_{\omega_k} ( T_{\omega}^{k-1}(x))} \quad\\
%&= \lim_{n \rightarrow \infty} \frac1n \log \abs{ \prod_{k=0}^{n-1} \frac{\de}{\de x} T_{\omega_{k+1}} ( T_{\omega}^{k}(x)) } \\
%&= \lim_{n \rightarrow \infty} \frac1n \log \abs{ \prod_{k=0}^{n-1} \frac{\de}{\de x} \pi \circ L ( \sigma^{k} (\omega),  T_{\omega}^{k}(x))} \\
%&= \lim_{n \rightarrow \infty} \frac1n \log \abs{ \prod_{k=0}^{n-1} \frac{\de}{\de x} \pi L ( L^k (\omega, x))} \\
&=\int_{\{0,1\}^{\mathbb{N}} \times [0,1]} \log \abs{\frac{\de}{\de x} \pi (L(\omega, x))} d (m_p \times \lambda ) (\omega,x) \\
%&=\int_{\{0,1\}^{\mathbb{N}} \times [0,1]} \log \abs{\frac{\de}{\de x} T_{\omega} (x)} d (m_p \times \lambda ) (\omega,x) \\
&= \sum_{d=2}^{\infty} \frac{\log (d (d-1))}{d (d-1)}=: \Lambda_{m_p \times \lambda}.
\end{split}
\end{equation}
This gives the second part of the proposition. For the last part, it is obvious that $\log 2$ is a lower bound for $\Lambda (\omega,x)$, since $|\frac{\de}{\de x} T_j (x)| \ge 2$ for $j=0,1$. The rest follows from Theorem~\ref{t:bi} since the sequence $\omega= (\overline{0})$ reduces $L$ to the standard L\"uroth map $T_L$.
\end{proof}

\begin{remark}
It follows from \eqref{e:lexp} that the set of points (of zero Lebesgue measure) that present a Lyapunov exponent different from $\Lambda_{m_p \times \lambda}$, includes {\em fixed points}, i.e., points $(\omega, x) \in \{0,1\}^{\mathbb{N}} \times [0,1]$ for which $T_\omega^k(x)=x$ for all $k \geq 1$, since for any such point $(\omega,x)$, if $x \in [z_d, z_{d-1})$, then $\Lambda (\omega, x) = \log d$. 
%\[\prod_{k=1}^{n} \frac{\de}{\de x} T_{\omega_{k+1}} ( T_{\omega}^{k}(x)) = \prod_{k=1}^{n} \frac{\de}{\de x} T_{\omega_{k+1}} ( x) = \bigg( \frac{\de}{\de x} T_{\omega} ( x)  \bigg)^n,\]
%for all $n \geq 1$. Hence, if $x \in [z_d, z_{d-1})$ then
%\begin{equation} \nonumber
%\begin{split}
%\Lambda (\omega, x) &= \lim_{n \rightarrow \infty} \frac1n \log \abs{ \prod_{k=1}^{n} \frac{\de}{\de x} \pi \circ L ( L^k(\omega,x))}\\
%&=\lim_{n \rightarrow \infty} \frac1n \log \abs{d^n}\\
%&= \log d.
%\end{split}
%\end{equation}
Fixed points can be found in any set $\{0,1\}^\mathbb N \times [z_d, z_{d-1})$ for $d \geq 2$, which immediately yields that $\Lambda$ attains arbitrarily large values.% starting from $\log 2 \cong 0.69314 \ldots$, which corresponds to the Lyapunov exponent of the fixed point given by the sequence $(\, \overline{(0, 2)} \, )$.
\end{remark}

\subsection{Approximation coefficients}
A GLS map $T_{\mathcal P,\varepsilon}: [0,1] \rightarrow [0,1]$ is defined by a partition $\mathcal P=\{I_n = (\ell_n, r_n]\}_n$ and a vector $\varepsilon = (\varepsilon_n)_n$. To be specific, the intervals $I_n$ satisfy $\lambda(\bigcup I_n)=1$ and $\lambda(I_n \cap I_k ) = 0$ if $n \neq k$ and on $I_n$ the map $T_{P, s}$ is given by
\[ T_{\mathcal P, \varepsilon }(x) = \varepsilon_n \frac{r_n-x}{\ell_n-r_n} + (1-\varepsilon_n) \frac{x-\ell_n}{\ell_n-r_n},\]
and $T_{\mathcal P, \varepsilon}(x)=0$ if $x \in [0,1]\setminus \bigcup I_n$. Any GLS transformation produces by iteration number expansions similar to the L\"uroth expansions from \eqref{d:Lexp}, called {\em generalised L\"uroth expansions}. In \cite{BaBu,DK96} the authors studied the approximation coefficients 
\[\theta_n^{\mathcal P_L, \varepsilon} = q_n \big|x - \frac{p_n}{q_n}\big|, \quad n > 0\]
for GLS maps with the standard L\"uroth partition $\mathcal P_L = \big\{ \big[ \frac1{n}, \frac{1}{n-1} \big) \big\}_{n \ge 2} =\{ [z_n, z_{n-1})\}_{n \geq 2}$. In particular \cite[Corollary 2]{BaBu} gives that amongst all GLS systems with standard L\"uroth partition, $T_A$ presents the best approximation properties. More precisely, it states that for any such GLS map $T_{\mathcal P_L, \varepsilon}$ there exists a constant $M_{\mathcal P_L, \varepsilon}$ such that for Lebesgue a.e.~$x \in [0,1]$, the limit
\begin{equation}\label{t:approxcon}
\lim_{n \rightarrow \infty} \frac1n \sum_{i=1}^n \theta_i^{\mathcal P_L, \varepsilon} (x) 
\end{equation}
exists and equals $M_{\mathcal P_L, \varepsilon}$. Furthermore, $M_A \leq M_{\mathcal P_L, \varepsilon} \leq M_L$. In Remark 2, below Corollary 2, of \cite{BaBu}, the authors remark that not every value in the interval $[M_A, M_L]$ can be obtained by such GLS maps, and they suggest furthermore that the achievable values of $M_{\mathcal P_L, \varepsilon}$ might form a fractal set. The situation changes for the random system $L$, as the next theorem shows.

\vskip .2cm
For the random L\"uroth map $L=L_0$ we define for each $(\omega, x) \in \{0,1\}^{\mathbb{N}} \times [0,1]$ and $n \ge 1$ the {\em $n$-th approximation coefficient} by
\[ \theta_n(\omega,x) = q_n \Big| x - \frac{p_n}{q_n} \Big|,\]
where $q_n = (d_n-s_n) \prod_{i=1}^{n-1} d_i(d_i-1)$.

\begin{theorem}\label{p:ethetan}
For the random L\"uroth map $L=L_0$ and $0<p <1$ the limit
\begin{equation}\nonumber
\lim_{n \rightarrow \infty} \frac1n \sum_{i=1}^n \theta_i (\omega, x) 
\end{equation}
exists for $m_p \times \lambda$-a.e.~$(\omega, x) \in \{0,1\}^{\mathbb{N}} \times [0,1]$ and equals
\[M_p := \mathbb{E}[\theta_n] = p \frac{2 \zeta(2)-3 }{2} + \frac{2-\zeta(2)}{2}, \]
where $\zeta(2)$ is the zeta function at $2$. The function $p \mapsto M_p$ maps the interval $[0,1]$ onto the interval $[M_{A}, M_{L}]$. 
\end{theorem}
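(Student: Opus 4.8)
The plan is to recognise that the Cesàro average $\frac1n\sum_{i=1}^n\theta_i$ is in fact a Birkhoff average for $L$, and then to identify the limit by a direct integral computation. First I would rewrite a single approximation coefficient as a function of the current state of the orbit. Using the identity $\abs{x-\frac{p_n}{q_n}(\omega,x)} = \frac{T_\omega^n(x)}{\prod_{i=1}^n d_i(d_i-1)}$ established inside the proof of Proposition~\ref{p:le}, together with $q_n=(d_n-s_n)\prod_{i=1}^{n-1}d_i(d_i-1)$, the product telescopes and leaves $\theta_n(\omega,x)=\frac{(d_n-s_n)\,T_\omega^n(x)}{d_n(d_n-1)}$. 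Substituting the affine relation $T_\omega^n(x)=(-1)^{s_n}d_n(d_n-1)T_\omega^{n-1}(x)+(-1)^{s_n+1}(d_n-1+s_n)$ and treating $s_n=0$ and $s_n=1$ separately, this collapses to $\theta_n=d_n\,T_\omega^{n-1}(x)-1$ when $s_n=0$ and $\theta_n=1-(d_n-1)T_\omega^{n-1}(x)$ when $s_n=1$. Since for $c=0$ one has $s_n=\omega_n$, and $d_n$ is determined by $y:=T_\omega^{n-1}(x)\in[z_{d_n},z_{d_n-1})$, the coefficient $\theta_n$ depends on $(\omega,x)$ only through $\omega_n$ and $y$, and one checks it always lies in $[0,1]$.

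Consequently $\theta_n(\omega,x)=g(L^{n-1}(\omega,x))$ for the bounded, hence integrable, observable $g\colon\{0,1\}^{\mathbb N}\times[0,1]\to[0,1]$ defined by $g(\omega,x)=d(x)\,x-1$ if $\omega_1=0$ and $g(\omega,x)=1-(d(x)-1)x$ if $\omega_1=1$, where $d(x)$ is the digit with $x\in[z_{d(x)},z_{d(x)-1})$. Then $\frac1n\sum_{i=1}^n\theta_i(\omega,x)=\frac1n\sum_{i=0}^{n-1}g(L^i(\omega,x))$. Since $m_p\times\lambda$ is invariant and ergodic for $L$, Birkhoff's Ergodic Theorem yields that this average converges for $m_p\times\lambda$-a.e.~$(\omega,x)$ to $\int g\,d(m_p\times\lambda)=\mathbb E[\theta_n]$, which gives existence of the limit.

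It then remains to evaluate the integral. By Fubini, and because $g$ depends on $\omega$ only through $\omega_1$, I would write $\int g\,d(m_p\times\lambda)=p\int_0^1(d(x)x-1)\,dx+(1-p)\int_0^1(1-(d(x)-1)x)\,dx$. Splitting each integral over the intervals $[z_d,z_{d-1})=[\frac1d,\frac1{d-1})$, a direct computation gives $\int_{1/d}^{1/(d-1)}(dx-1)\,dx=\frac{1}{2d(d-1)^2}$ and $\int_{1/d}^{1/(d-1)}(1-(d-1)x)\,dx=\frac{1}{2d^2(d-1)}$. Summing over $d\ge2$ by partial fractions, where the telescoping parts contribute $\mp1$ and the remaining part is a copy of $\zeta(2)$, produces $\int_0^1(d(x)x-1)\,dx=\frac{\zeta(2)}{2}-\frac12=M_L$ and $\int_0^1(1-(d(x)-1)x)\,dx=1-\frac{\zeta(2)}{2}=M_A$. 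Hence $\mathbb E[\theta_n]=p\,M_L+(1-p)\,M_A$, which rearranges to $M_p=p\frac{2\zeta(2)-3}{2}+\frac{2-\zeta(2)}{2}$. For the surjectivity claim I note that $p\mapsto M_p$ is affine with slope $M_L-M_A=\frac{2\zeta(2)-3}{2}>0$ and satisfies $M_0=M_A$, $M_1=M_L$; being continuous and strictly increasing it maps $[0,1]$ onto $[M_A,M_L]$.

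The bulk of the work is elementary once the reduction is in place, so the genuinely substantive step is the first one: despite $\theta_n$ being assembled from the entire prefix of the expansion through $p_n$ and $q_n$, the telescoping in $q_n\abs{x-\frac{p_n}{q_n}}$ leaves a quantity depending only on the single current state $L^{n-1}(\omega,x)$. This is exactly what turns the average into a bona fide Birkhoff sum and lets the ergodicity of $m_p\times\lambda$ do the rest; verifying the sign conventions in the two cases $s_n\in\{0,1\}$ and the boundedness of $g$ are the only places where care is needed.
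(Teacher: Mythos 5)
Your proposal is correct and follows essentially the same route as the paper: you derive the same pointwise identity $\theta_n=d_n x_{n-1}-1$ (for $s_n=0$) and $\theta_n=1-(d_n-1)x_{n-1}$ (for $s_n=1$), which is precisely the paper's equation \eqref{e:thetan}, and then apply Birkhoff's Ergodic Theorem for the invariant ergodic measure $m_p\times\lambda$. The only divergence is in evaluating $\mathbb{E}[\theta_n]$: you integrate the observable directly over the digit intervals $\big[\frac1d,\frac1{d-1}\big)$ and sum the resulting series by partial fractions, whereas the paper first computes the distribution function $F_{\theta_n}=pF_L+(1-p)F_A$ and then uses the known first moments $M_L$ and $M_A$ from Theorem~\ref{t:fafl}; both yield $pM_L+(1-p)M_A$, your version being self-contained while the paper's additionally identifies the limiting distribution of the approximation coefficients.
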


\begin{proof}
Fix $(\omega, x) \in \{0,1\}^{\mathbb{N}} \times [0,1]$. Let $\theta_n=\theta_n (\omega, x)$, $d_n=d_n(\omega, x) $ and $x_n= T_\omega^n (x)$. By definition of the convergents $\frac{p_n}{q_n}$ we have
\[ \abs{x - \frac{p_n}{q_n}} = \frac{x_n}{\prod_{i=1}^n d_i(d_i-1)}\] 
and the numbers $q_n$ are such that we can write
\begin{equation}\label{e:thetan}
\theta_n =\begin{cases}
\dfrac{x_n}{d_n-1} = d_n x_{n-1} -1, & \mbox{if } x_n=T_L (x_{n-1}),\\[10pt]
\dfrac{x_n}{d_n}=-(d_n -1)x_{n-1} +1,  & \mbox{if } x_n=T_A (x_{n-1}).
 \end{cases}
\end{equation}

Since the Lebesgue measure is stationary for $L_0$, it follows from Birkhoff's Ergodic Theorem that the sequence $(x_n)_n$ is uniformly distributed over the interval $[0,1]$. % By \eqref{e:thetan}, which relates $x_n$ to $\theta_n$, also the approximation coefficients $\theta_n$'s are uniformly distributed over the interval $[0,1]$.
We use this fact together with \eqref{e:thetan} to compute the cumulative distribution function (CDF) $F_{\theta_n}$ of $\theta_n$. For $\mathbb{P}= m_p \times \lambda$, by definition of the CDF and by the law of total probability, for $y \in [0, 1 ]$ we have
\begin{equation}\nonumber
\begin{split}
F_{\theta_n}(y)=\mathbb{P}(\theta_n \leq y)= \, & \mathbb{P}(\theta_n \leq y \, | \, x_n=T_L (x_{n-1}) ) \mathbb{P}(x_n=T_L (x_{n-1})) + \\[10pt]
& \mathbb{P}(\theta_n \leq y \, | \, x_n=T_A (x_{n-1}) ) \mathbb{P}(x_n=T_A (x_{n-1})) \\[10pt]
= & \sum_{d=2}^{\infty} \mathbb{P}\bigg( x_{n-1} \leq \frac{1+y}{d}, \quad  x_{n-1} \in [ z_d, z_{d-1}) \bigg) \mathbb{P}(x_n=T_L (x_{n-1}))  +\\[10pt]
&  \ \mathbb{P}\bigg(  x_{n-1} \geq \frac{1-y}{d-1}, \quad  x_{n-1} \in [ z_d, z_{d-1}) \bigg) \mathbb{P}(x_n=T_A (x_{n-1})) \\[10pt]
= & \sum_{d=2}^{\infty} p \mathbb{P}\bigg( x_{n-1} \leq \frac{1+y}{d}, \quad  x_{n-1} \in [ z_d, z_{d-1}) \bigg)   +\\[10pt]
&  \ (1-p) \mathbb{P}\bigg(  x_{n-1} \geq \frac{1-y}{d-1}, \quad  x_{n-1} \in [ z_d, z_{d-1}) \bigg) .
\end{split} 
\end{equation}
Let $d \geq 2$ be such that $x_{n-1}\in \big[ \frac1d, \frac{1}{d-1} \big)= [z_d,z_{d-1})$, then either $ x_n=T_L (x_{n-1}) \ \text{or} \ x_n=T_A (x_{n-1})$. In the first case, since $d \in \mathbb{N}$, 
\[\frac{1+y}{d} \in \bigg[\frac1d, \frac{1}{d-1} \bigg) \quad \text{if and only if } \quad d \leq \left\lfloor\dfrac{1}{y}\right\rfloor +1,\]
and we obtain
\begin{equation}\nonumber 
\mathbb{P} \bigg( x_{n-1} \leq \frac{1+y}{d} ,  \quad x_{n-1} \in [ z_d, z_{d-1}) \bigg) = \begin{cases}
\dfrac{y}{d}, & \mbox{if } d \leq \left\lfloor\dfrac{1}{y}\right\rfloor +1, \\[10pt]
\dfrac{1}{d(d-1)}, & \mbox{otherwise}.
\end{cases}
\end{equation}
Similarly, in the latter 
\[ \frac{1-y}{d-1} \in  \bigg[\frac1d, \frac{1}{d-1} \bigg) \quad \text{if and only if } \quad d \leq \left\lfloor \frac{1}{y} \right\rfloor,\]
which gives 
\begin{equation}\nonumber 
\mathbb{P} \bigg( x_{n-1} \geq \frac{1-y}{d-1}  ,  \quad x_{n-1} \in [ z_d, z_{d-1}) \bigg) = \begin{cases}
\dfrac{y}{d-1}, & \mbox{if } d \leq \left\lfloor\dfrac{1}{y}\right\rfloor , \\[10pt]
\dfrac{1}{d(d-1)}, & \mbox{otherwise}.
\end{cases}
\end{equation}
Note that 
\[\sum_{d > \left\lfloor 1/y \right\rfloor +1 } \dfrac{1}{d(d-1)} =  \frac{1}{\left\lfloor 1/y \right\rfloor +1} \quad \text{and} \quad \sum_{d > \left\lfloor 1/y \right\rfloor } \dfrac{1}{d(d-1)} =  \frac{1}{\left\lfloor 1/y \right\rfloor}.  \]
Summing over all $d \geq 2$ gives
\[ F_{\theta_n}(y)= p \bigg( \sum_{d=2}^{\left\lfloor 1/y \right\rfloor +1 } \frac{y}{d} + \frac{1}{\left\lfloor 1/y \right\rfloor +1} \bigg) + (1-p) \bigg (\sum_{d=2}^{\left\lfloor 1/y \right\rfloor } \frac{y}{d-1} + \frac{1}{\left\lfloor 1/y \right\rfloor} \bigg),\]
so that by \eqref{q:fl} and Theorem~\ref{t:fafl} we obtain
\[F_{\theta_n}(y)= p F_L(y) + (1-p) F_A(y).\]
%Remark 2 below Theorem 4 of \cite{BaBu} gives, for the cumulative distribution functions $F_L$ and $F_A$ of the standard and alternating L\"uroth maps respectively, the following equalities
%\[\int_{0}^{1}  F_L (y) dy = \frac32 -\frac{ \zeta(2)}{2} \quad \text{and } \quad \int_{0}^{1}  F_A(y) dy = \frac{ \zeta(2)}{2}.\]
The expectation $\mathbb{E}[\theta_n]$ can be now computed by $\mathbb{E}[\theta_n] = \int_{0}^{1} (1-F_{\theta_n}(y)) dy,$ which with the results from Theorem~\ref{t:fafl} gives
\[\begin{split}
\mathbb{E}[\theta_n] = & 1- p \int_{0}^{1} F_L(y) dy - (1-p) \int_{0}^{1} F_A(y) dy\\
= & 1-p \bigg( \frac32 -\frac{ \zeta(2)}{2} \bigg) - (1-p) \bigg(\frac{ \zeta(2)}{2} \bigg)\\
= & p \frac{2 \zeta(2)-3 }{2} + \frac{2-\zeta(2)}{2},
\end{split} \]
that is
\begin{equation}\label{e:mppoly}
\lim_{n \rightarrow \infty} \frac1n \sum_{i=1}^n \theta_i (\omega, x) = p \frac{2 \zeta(2)-3 }{2} + \frac{2-\zeta(2)}{2} = M_p.
\end{equation}
Note that for $p=0$, $x_n= T_A^n$ for $n \geq 0$ and indeed $M_0=M_{A}$. In the same way, for $p=1$, $x_n= T_L^n$ for $n \geq 0$ and $M_1=M_{L}$. Lastly, $M_p$ is an increasing function in $p \in [0,1]$, so it can assume any value between $M_{A}$ and $M_{L}$.  
\end{proof}

Theorem \ref{t:main2} is now given by Proposition \ref{p:universal0}, Proposition \ref{p:le} and Theorem \ref{p:ethetan}.

\section{Generalised L\"uroth expansions with bounded digits}\label{s:finitea}

Both Proposition~\ref{p:le} and Theorem~\ref{p:ethetan} depend on the invariant measure $m_p \times \lambda$ for $L_0$. In Proposition~\ref{p:le} it is used to compute the value of $\Lambda(\omega,x)$ and in Theorem~\ref{p:ethetan} we use the fact that the numbers $T^n_\omega(x)$ are uniformly distributed over the interval $[0,1]$ and it is used to deduce \eqref{e:mppoly}. For $c  >0$, the random map $L_c$ becomes fundamentally different: The maps $T_{j,c}$ for $j=0,1$ present finitely many branches that are not always onto. These variations make the measures $m_p \times \lambda$ no longer $L_c$-invariant. It still follows from results in e.g.~\cite{Mo, Pe, GoBo,In} that for any $0<p<1$ the random map $L_c$ admits an invariant probability measure of type $m_p \times \mu_{p,c}$, where $m_p$ is the $(p,1-p)$-Bernoulli measure on $\{0,1\}^{\mathbb{N}}$ and $\mu_{p,c} \ll \lambda$ is a probability measure on $[c,1]$ that satisfies
\begin{equation}\label{q:pfmeasure}
\mu_{p,c} (B) = p \mu_{p,c} (T_{0,c}^{-1}(B)) + (1-p) \mu_{p,c} (T_{1,c}^{-1}(B))
\end{equation}
for each Borel measurable set $B \subseteq [c,1]$. We set $f_{p,c}:= \frac{\de \mu_{p,c}}{\de \lambda}$. Here we call $\mu_{p,c}$ a \emph{stationary measure} and $f_{p,c}$ an \emph{invariant density} for $L_c$. With \cite[Corollary 7]{Pe} it follows that since $T_{0,c}$ is expanding and has a unique absolutely continuous invariant measure, the measure $\mu_{p,c}$ is the unique stationary measure for $L_c$ and that $L_c$ is ergodic with respect to $m_p \times \mu_{p,c}$.

\begin{example}\label{e:L13}
For $c=\frac13$ and $0 < p < 1$ consider the measure $\mu_{p,\frac13}$ with density
\begin{equation}\nonumber
f_{p, \frac13}(x)= \begin{cases}
\frac{9}{8}, & \text{if  } x \in \big[\frac13, \frac23\big), \\[10pt]
\frac{15}{8}, & \text{if  } x \in \big[\frac23, 1].
\end{cases}
\end{equation}
One can check by direct computation that $\mu_{p, \frac13}$ satisfies \eqref{q:pfmeasure} and thus is the unique stationary measure for $L_{\frac13,p}$.
\end{example}

Since such an invariant measure exists, some of the results from the previous section also hold for the maps $L_{c,p}$ with $c>0$. In particular the Lyapunov exponent from \eqref{d:Lyap} is well defined and since the stationary measure $m_p \times \mu_{p,c}$ is still ergodic, we can apply Birkhoff's Ergodic Theorem to obtain for $(m_p \times \mu_{p,c})$-a.e.~point $(\omega, x)$ the following expression for the Lyapunov exponent:
\begin{equation}\label{d:specificlyap}
\Lambda(\omega, x) = \sum_{d=2}^{d_c} \mu_{p,c} \bigg( \bigg( \frac{1}{d}, \frac{1}{d-1} \bigg)\bigg)  \log (d(d-1))  + \mu_{p,c} \bigg( \bigg(  c, \frac{1}{d_c} \bigg) \bigg) \log (d_c(d_c+1)) ,
\end{equation}
where $d_c$ is the unique positive integer such that $c \in \big[ \frac{1}{d_c+1}, \frac{1}{d_c} \big)$.

\begin{example}\label{e:L13more}
Consider the map $L_{\frac13,p}$ from Example \ref{e:L13} again. The possible digits of the generalised L\"uroth expansions produced by $L_{\frac13,p}$ are $(0,2), \, (1,2), \, (0,3), \, (1,3)$. It follows from Birkhoff's Ergodic Theorem that the frequency of the digit $(0,2)$ is given by
\begin{equation}\nonumber 
\begin{split}
\pi_{(0,2)} &= \lim_{n \to \infty} \frac{ \# \{ 1 \leq j \leq n: \, s_j(\omega,x)=0 \, \text{ and } \, d_j(\omega, x)=2\}}{n} \\
& = \int_{\{0,1\}^{\mathbb{N}} \times [\frac13,1]} \mathbf{1}_{[0] \times [\frac23, \frac56]} + \mathbf{1}_{\{0,1\}^{\mathbb{N}} \times (\frac56,1]} \text{d} m_{p} \times \mu_{p, \frac13} \\
& = \frac{15}{8}p \bigg( \frac56 - \frac23 \bigg) + \frac{15}{8}\bigg( 1 - \frac56 \bigg)= \frac{5+5p}{16}.
\end{split}
\end{equation}
Similarly,
\[\pi_{(1,2)}= \frac{8-5p}{16}, \qquad \pi_{(0,1)}= \frac{1+p}{16}, \qquad \pi_{(1,3)}= \frac{2-p}{16}.\]
Note that for 
\[\pi_2=  \lim_{n \to \infty} \frac{ \# \{ 1 \leq j \leq n: \,  d_j(\omega, x)=2\}}{n} = \pi_{(0,2)}+ \pi_{(1,2)}\] 
we also obtain 
\[\pi_2= \mu_{p, \frac13} \bigg(\bigg[ \frac12, 1 \bigg]\bigg)= \frac{13}{16}, \quad \text{and} \quad \pi_3= 1-\pi_2= \frac{3}{16}.\]
Moreover for $m_p \times \mu_{p,\frac13}$-a.e. $(\omega, x)$ we have by \eqref{d:specificlyap} that 
\begin{equation}\nonumber
\begin{split}
\Lambda(\omega, x)  =\ & \sum_{d=2}^{3} \mu_{p,\frac13} \bigg( \bigg[ \frac{1}{d}, \frac{1}{d-1} \bigg)\bigg) \log (d(d-1))  \\
 =\ &  \mu_{p, \frac13} \bigg( \bigg[ \frac12, 1 \bigg] \bigg) \log 2 + \mu_{p, \frac13} \bigg( \bigg[ \frac13, \frac12 \bigg) \bigg) \log 6   \\
=\ & \frac{13}{16} \log 2   +  \frac{3}{16} \log 6  = 0.89913 \ldots < 1.198328 \ldots = \Lambda_{m_p \times \lambda} . 
\end{split}
\end{equation}
\end{example}

From \eqref{d:specificlyap} and Example \ref{e:L13more} it is clear that to obtain results similar to Proposition~\ref{p:le} for $c>0$ we need a good expression for the density of $\mu_{p, c}$ and an accurate description of the location of the points $\frac{p_n}{q_n}$ relative to $x$. %For $c=0$ we saw that it immediately follows that Lebesgue almost all $x \in [0,1]$ have uncountably many different $0$-L\"uroth expansions.
We start this section by giving some results on the number of different $c$-L\"uroth expansions a number $x$ can have for $c>0$.

\subsection{Uncountably many universal expansions}
Let $c \in \big(0, \frac12\big]$ and consider the corresponding alphabet $\mathcal A_c = \{ (i,j) \, : \, i \in \{0,1\}, \, j\in \{ 2,3, \ldots ,\lceil \frac1c \rceil \} \}$. We call a sequence $((s_n,d_n))_{n \ge 1} \in \mathcal A_c^\mathbb N$ {\em $c$-L\"uroth admissible} if
\[ \sum_{n \ge 1} (-1)^{\sum_{i=1}^{n-1}s_{i+k}} \frac{d_{n+k}-1+s_{n+k}}{\prod_{i=1}^n d_{i+k}(d_{i+k}-1)} \in [c,1]\]
for all $k \ge 0$ and if moreover the sequence does not end in $((0,d+1),(0,2)^\infty)$ for some $2 \leq d < \lceil \frac1c \rceil$.

\vskip .2cm
The first main result of the number of different $c$-L\"uroth expansions a number $x \in [c,1]$ can have is the following. 
\begin{theorem}\label{t:uncountable25}
Let $0<c \leq \frac25$. Then every $x \in [c,1] \setminus \mathbb Q$ has uncountably many different $c$-L\"uroth expansions.
\end{theorem}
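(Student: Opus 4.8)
The plan is to reduce the statement to a single claim about the deterministic part of the dynamics and then to assemble uncountably many expansions by a branching construction, playing for irrational points the role that Proposition~\ref{p:loop}(iii) plays for rationals. Call the orbit of a point its \emph{forced orbit} for as long as the current point stays outside the switch region $S$: there the two maps $T_{0,c}$ and $T_{1,c}$ coincide, so the orbit is uniquely determined until it first meets $S$. The heart of the proof is the claim that \emph{for $0<c\le\frac25$ the forced orbit of every irrational $y\in[c,1]$ meets $S$ after finitely many steps.} Granting this, I would argue as follows. Starting from $x$, run the forced orbit until it first reaches a point $y\in S$; there the choice $\omega_i=0$ (applying $T_{0,c}=T_L$, with sign $s_i=0$) and the choice $\omega_i=1$ (applying $T_{1,c}=T_A$, with sign $s_i=1$) yield the two signs $0$ and $1$ and two distinct irrational successors, and from each of them the forced orbit again reaches $S$ by the claim. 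Recursing produces a full binary tree with infinitely many levels, hence uncountably many paths $\omega$. Two distinct paths first differ at some visit to $S$, where the corresponding signs $s_i$ differ, so their sign--digit sequences differ; by Proposition~\ref{p:iinfinite} each resulting expansion is infinite and not ultimately periodic, so no two of them are identified. Thus $x$ has uncountably many different $c$-L\"uroth expansions.

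To prove the claim I first locate the image of the agreement region $[c,1]\setminus S$. On a left agreement interval $[z_n,z_n^+)$ the forced map is $T_A$, with $T_A(z_n)=1$ and $T_A(z_n^+)=1-c$; on a right agreement interval $(z_{n-1}^-,z_{n-1})$ it is $T_L$, with $T_L(z_{n-1}^-)=1-c$ and $T_L(x)\to1$. Hence every forced step out of the agreement region lands in $(1-c,1]$. Since $c\le\frac25<\frac12$ we have $(1-c,1]\subseteq[z_2,1]$, and with $z_2^+=\frac{1+c}{2}$ and $z_1^-=1-\frac c2$ I decompose
\[(1-c,1]=L\cup S_0\cup R,\qquad L=\Big(1-c,\tfrac{1+c}{2}\Big),\quad S_0=S\cap(1-c,1],\quad R=\Big(1-\tfrac c2,1\Big],\]
where $L$ is empty when $c\le\frac13$. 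On $L$ the forced map is $T_A(x)=2-2x$, with fixed point $\frac23$, and on $R$ it is $T_L(x)=2x-1$, with fixed point $1$. A direct computation gives $T_A(L)=(1-c,2c)$ and $T_L(R)=(1-c,1]$, so while it stays in the agreement part of $(1-c,1]$ the forced orbit remains in $(1-c,1]$. The hypothesis $c\le\frac25$ enters precisely here: it is equivalent to $2c\le1-\frac c2$, that is, to $T_A(L)\subseteq(1-c,1-\frac c2]=L\cup S_0$. Therefore a forced orbit cannot pass directly from $L$ into $R$: from $L$ it either re-enters $L$ or drops into $S_0\subseteq S$.

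Now suppose, for contradiction, that some irrational $y$ has a forced orbit avoiding $S$ forever. After one step this orbit lies in $(1-c,1]$ and, avoiding $S_0$, stays in $L\cup R$, with all points irrational. By the previous paragraph, once the orbit enters $L$ it remains in $L$; and if it never enters $L$ it stays in $R$. In either case the orbit is eventually confined to a single bounded interval on which the forced map is affine of slope $\pm2$ about its fixed point, so that $|T_L(x)-1|=2\,|x-1|$ on $R$ and $|T_A(x)-\frac23|=2\,|x-\frac23|$ on $L$. Confinement of an orbit to a bounded interval then forces $2^{k}|y_0-p|$ (with $p=1$ or $p=\frac23$) to stay bounded in $k$, whence $y_0=1$ or $y_0=\frac23$, both rational --- contradicting that the orbit consists of irrationals. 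This proves the claim, and the theorem follows from the tree construction of the first paragraph. The main obstacle is exactly this last step: excluding irrational forced orbits that evade $S$ indefinitely. The danger is an orbit that shuttles between the two agreement branches $L$ and $R$ and thereby sustains a nontrivial (uncountable) invariant set of survivors; the bound $c\le\frac25$ is what severs the passage $L\to R$ and collapses each survivor set to a single rational fixed point.
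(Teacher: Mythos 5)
Your proposal is correct and follows essentially the same route as the paper: your central claim is exactly Lemma~\ref{l:allinS}, proved by the same mechanism (the hypothesis $c\le\frac25$ is equivalent to $2c\le z_1^-$, so the forced dynamics on $(1-c,1]$ cannot pass from the $T_A$-agreement branch into the $T_L$-agreement branch without first entering $S$, and the repelling fixed points $\frac23$ and $1$ rule out irrational orbits that evade $S$ forever), after which branching at each visit to $S$ yields uncountably many distinct sign--digit sequences, just as in the paper's argument built on Lemmas~\ref{l:rightdigits} and~\ref{l:admissible}. Your packaging of the case analysis as a single expansion-about-a-fixed-point estimate on $L$ and on $R$ is a mild streamlining of the paper's explicit subdivision of $(1-c,z_2^+)$, but the substance is identical.
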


Before we prove the theorem we prove three lemmata. 
\begin{lemma}\label{l:rightdigits}
Let $c \in \big(0, \frac12\big)$ and let $((s_n,d_n))_{n \ge 1} \in \mathcal A_c^\mathbb N$ be $c$-L\"uroth admissible. For
\[ x = \sum_{n \ge 1} (-1)^{\sum_{i=1}^{n-1}s_i} \frac{d_n-1+s_n}{\prod_{i=1}^n d_i(d_i-1)}\]
the following hold.
\begin{itemize}
\item If $s_1 =0$, then $x \in \big[ \frac1{d_1}+ \frac{c}{d_1(d_1-1)} , \frac1{d_1-1} \big]=[z_{d_1}^+, z_{d_1-1}]$.
\item If $s_1=1$, then $x \in \big[\frac1{d_1}, \frac1{d_1-1}-\frac{c}{d_1(d_1-1)}\big] = [z_{d_1}, z_{d_1-1}^-]$.
\end{itemize}
\end{lemma}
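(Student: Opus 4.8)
The plan is to peel off the first term of the series and reduce everything to an affine relation between $x$ and the tail of its expansion. Set
\[ y := \sum_{n \ge 1} (-1)^{\sum_{i=1}^{n-1}s_{i+1}} \frac{d_{n+1}-1+s_{n+1}}{\prod_{i=1}^n d_{i+1}(d_{i+1}-1)}, \]
which is precisely the expansion associated to the shifted sequence $((s_{n+1},d_{n+1}))_{n\ge1}$. The single external input I would use is that this shifted sequence is again $c$-L\"uroth admissible, so taking $k=1$ in the admissibility condition gives $y \in [c,1]$. Everything then reduces to tracking how the first branch sends the interval $[c,1]$ back into $[z_{d_1},z_{d_1-1}]$.

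First I would isolate the $n=1$ summand and reindex the remainder by $m=n-1$, factoring the common factor $(-1)^{s_1}/(d_1(d_1-1))$ out of the tail. The telescoping of the products collapses the remaining sum to $y$, yielding the affine relation
\[ x = \frac{d_1-1+s_1}{d_1(d_1-1)} + \frac{(-1)^{s_1}}{d_1(d_1-1)}\, y. \]
This is just the inversion of the branch identity $y = (-1)^{s_1}d_1(d_1-1)x + (-1)^{s_1+1}(d_1-1+s_1)$ already displayed in the excerpt, so this step is pure bookkeeping with the alternating signs and I expect no real difficulty here.

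Then I would substitute $y\in[c,1]$ and treat the two sign cases separately, using that $y \mapsto x$ is increasing when $s_1=0$ and decreasing when $s_1=1$; this monotonicity is exactly why the one-sided $c$-shift lands on opposite endpoints in the two cases. For $s_1=0$ the relation reads $x = \tfrac1{d_1} + \tfrac{y}{d_1(d_1-1)}$, so $y=c$ gives the left endpoint $\tfrac1{d_1}+\tfrac{c}{d_1(d_1-1)}=z_{d_1}^+$ and $y=1$ gives $\tfrac1{d_1}+\tfrac1{d_1(d_1-1)}=\tfrac1{d_1-1}=z_{d_1-1}$. For $s_1=1$ the relation reads $x = \tfrac1{d_1-1} - \tfrac{y}{d_1(d_1-1)}$, so $y=1$ gives the left endpoint $\tfrac1{d_1-1}-\tfrac1{d_1(d_1-1)}=\tfrac1{d_1}=z_{d_1}$ and $y=c$ gives $\tfrac1{d_1-1}-\tfrac{c}{d_1(d_1-1)}=z_{d_1-1}^-$. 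The only arithmetic to double-check is the two collapses $\tfrac1{d_1}+\tfrac1{d_1(d_1-1)}=\tfrac1{d_1-1}$ and $\tfrac1{d_1-1}-\tfrac1{d_1(d_1-1)}=\tfrac1{d_1}$, together with matching the expressions for $z_n^\pm$ in \eqref{q:zn}; both are immediate. Since nothing beyond the admissibility of the tail is needed, the only thing to watch is the sign convention, and there is no substantive obstacle.
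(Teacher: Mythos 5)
Your proposal is correct and follows essentially the same route as the paper's proof: isolate the first term, identify the tail with the shifted admissible sequence (your $y$ is the paper's $x_1$), invoke admissibility with $k=1$ to place it in $[c,1]$, and push that interval through the affine first branch in the two sign cases. No differences worth noting.
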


\begin{proof}
For $x$ we can write
\[ x = \frac{d_1-1+s_1}{d_1(d_1-1)} + \frac{(-1)^{s_1}}{d_1(d_1-1)} \sum_{n \ge 1} (-1)^{\sum_{i=1}^{n-1}s_{i+1}} \frac{d_{n+1}-1+s_{n+1}}{\prod_{i=1}^{n} d_{i+1}(d_{i+1}-1)}.\]
Write
\[ x_1= \sum_{n \ge 1} (-1)^{\sum_{i=1}^{n-1}s_{i+1}} \frac{d_{n+1}-1+s_{n+1}}{\prod_{i=1}^{n} d_{i+1}(d_{i+1}-1)}.\]
Since $((s_n,d_n))_{n \ge 1}$ is $c$-L\"uroth admissible, we have $x_1 \in [c,1]$. If $s_1=0$, then
\[ \frac{d_1-1+s_1}{d_1(d_1-1)} + \frac{(-1)^{s_1}}{d_1(d_1-1)} x_1 = \frac1{d_1} + \frac1{d_1(d_1-1)} x_1\in \bigg[ \frac1{d_1} + \frac{c}{d_1(d_1-1)} , \frac1{d_1-1} \bigg].\]
Similarly if $s_1=1$, then
\[ \frac{d_1-1+s_1}{d_1(d_1-1)} + \frac{(-1)^{s_1}}{d_1(d_1-1)} x_1 = \frac1{d_1-1} - \frac1{d_1(d_1-1)} x_1\in \bigg[ \frac1{d_1}, \frac1{d_1-1} - \frac{c}{d_1(d_1-1)} \bigg]. \qedhere\]
\end{proof}

Recall the definition of the switch region $S$ from \eqref{d:switch}, that is
\[S= [c,1] \cap \bigcup_{n >1}  [z_n^+, z_{n-1}^-].\]

\begin{lemma}\label{l:admissible}
A sequence $((s_n,d_n))_{n \ge 1} \in \mathcal A_c^\mathbb N$ is $c$-L\"uroth admissible if and only if there is a point $(\omega,x) \in \{0,1\}^\mathbb N \times [c,1]$ such that the sequence $(s_n(\omega,x),d_n(\omega,x))_{n \ge 1}$ generated by $L_c$ satisfies $ s_n(\omega,x)=s_n$ and $d_n(\omega,x)=d_n$ for each $n \ge 1$.
\end{lemma}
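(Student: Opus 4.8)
The plan is to build a bridge between the two conditions out of the inversion relation already used in the proof of Lemma~\ref{l:rightdigits}. For a point $(\omega,x)$ with signs and digits $(s_n(\omega,x),d_n(\omega,x))_n$ one has $T_{\omega,c}^{k-1}(x) = \frac{d_k-1+s_k}{d_k(d_k-1)} + \frac{(-1)^{s_k}}{d_k(d_k-1)}\,T_{\omega,c}^{k}(x)$, and iterating this identity shows that the tail sum appearing in the definition of $c$-L\"uroth admissibility at index $k$ is exactly $T_{\omega,c}^{k}(x)$. I would use this identification throughout, writing $x_k$ for the tail sum at index $k$, so that $x_0=x$ and $x_{k-1}=\frac{d_k-1+s_k}{d_k(d_k-1)}+\frac{(-1)^{s_k}}{d_k(d_k-1)}x_k$ holds for any sequence in $\mathcal A_c^{\mathbb N}$, whether or not it comes from the dynamics. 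Solving this for $x_k$ recovers the branch formula $x_k=(-1)^{s_k}d_k(d_k-1)x_{k-1}+(-1)^{s_k+1}(d_k-1+s_k)$, which is the precise sense in which ``the recursion reproduces the map''.

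For the direction realizable $\Rightarrow$ admissible, suppose $(s_n,d_n)=(s_n(\omega,x),d_n(\omega,x))$ for some $(\omega,x)$. Since $L_c$ maps $\{0,1\}^{\mathbb N}\times[c,1]$ into itself, each $x_k=T_{\omega,c}^{k}(x)$ lies in $[c,1]$, giving the first requirement. For the non-ending clause I would argue by contradiction: if the sequence from position $k+1$ onwards equalled $((0,d+1),(0,2)^\infty)$ with $2\le d<\lceil\frac1c\rceil$, then a direct computation gives $x_k=\frac{1}{d+1}+\frac{1}{(d+1)d}=\frac1d=z_d$. But the digit $d_{k+1}$ is read off from $x_k$ by the rule $d_{k+1}=n$ when $x_k\in[z_n,z_{n-1})$, and $z_d\in[z_d,z_{d-1})$ forces $d_{k+1}=d$, not $d+1$, a contradiction. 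Hence the realized sequence never ends in the forbidden way.

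For the converse I would construct $\omega$ inductively. Given an admissible sequence, define the $x_k$ as above; admissibility puts every $x_k$ in $[c,1]$. Assuming $T_{\omega,c}^{k-1}(x)=x_{k-1}$, Lemma~\ref{l:rightdigits} applied to the shift places $x_{k-1}$ in $[z_{d_k}^+,z_{d_k-1}]$ when $s_k=0$ and in $[z_{d_k},z_{d_k-1}^-]$ when $s_k=1$; in both cases $x_{k-1}$ lies in the digit-$d_k$ interval $[z_{d_k},z_{d_k-1})$, with the single possible exception of the right endpoint $z_{d_k-1}$ when $s_k=0$. When $x_{k-1}$ lies in the switch region $S$ I set $\omega_k=s_k$, and otherwise the sign is forced by the position of $x_{k-1}$; the inclusions of Lemma~\ref{l:rightdigits} are exactly what guarantees this forced sign agrees with $s_k$, since points in $(z_{d_k-1}^-,z_{d_k-1})$ force sign $0$ and points in $[z_{d_k},z_{d_k}^+)$ force sign $1$. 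With this choice of $\omega_k$ the branch formula above yields $T_{\omega,c}^{k}(x)=x_k$, closing the induction.

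The main obstacle, and the reason the non-ending clause is needed at all, is the boundary case $s_k=0$ with $x_{k-1}=z_{d_k-1}$. A short computation shows this forces $x_k=1$, so that the tail from index $k$ equals $((0,d_k),(0,2)^\infty)$; for $d_k\ge 3$ this is precisely a forbidden ending, which admissibility rules out, whereas for $d_k=2$ one gets $x_{k-1}=z_1=1$, a genuine point of $[c,1]$ carrying digit $2$ and sign $0$, so no exclusion is required there. Thus condition (2) removes exactly the configurations in which the closed right endpoint supplied by Lemma~\ref{l:rightdigits} would clash with the half-open digit interval, and once these are excluded the inductive construction proceeds without obstruction at every step, completing both implications.
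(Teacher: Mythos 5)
Your proof is correct and follows essentially the same route as the paper's: defining the tail sums $x_k$, choosing $\omega_k$ according to whether $x_{k-1}\in S$, and invoking Lemma~\ref{l:rightdigits} together with the non-ending clause to close the induction. You are in fact more explicit than the paper about why the forbidden endings $((0,d+1),(0,2)^\infty)$ correspond exactly to the clash between the closed right endpoint $z_{d_k-1}$ and the half-open digit interval, which the paper compresses into a one-line remark about the definition of $L_c$ at the points $z_n$.
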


\begin{proof}
One direction follows since $L_c^k(\omega,x) \in [c,1]$ for all $k$ and by the definition of $L_c$, $s_i$ and $d_i$ on the points $z_n$. For the other direction, let $((s_n,d_n))_n$ be $c$-L\"uroth admissible. Set
\[ x= \sum_{n \ge 1} (-1)^{\sum_{i=1}^{n-1}s_i} \frac{d_n-1+s_n}{\prod_{i=1}^n d_i(d_i-1)}.\]
For $k \ge 0$ define the numbers
\[ x_k =  \sum_{n \ge 1} (-1)^{\sum_{i=1}^{n-1}s_{i+k}} \frac{d_{n+k}-1+s_{n+k}}{\prod_{i=1}^n d_{i+k}(d_{i+k}-1)}, \]
so that $x_0=x$. Let $\omega \in \{0,1\}^\mathbb N$ be such that
\[ \omega_k = \begin{cases}
0, & \text{ if } 1_S(x_{k-1})=1 \, \text{ and }\, s_k=0,\\
1, & \text{ otherwise}. 
\end{cases} \]
By Lemma~\ref{l:rightdigits} and the fact that $((s_n,d_n))_n$ does not end in $((0,d+1),(0,2)^\infty)$ it then follows that $s_1(\omega,x)=s_1$ and $d_1(\omega,x)=d_1$. Moreover, $T_\omega^1(x) = x_1$. It then follows by induction that for each $n \ge 1$, $s_n = s_n(\omega,x)$, $d_n= d_n(\omega,x)$ and $T_\omega^n(x)=x_n$. Hence, $((s_n,d_n))_n$ corresponds to the $c$-L\"uroth expansion of $(\omega,x)$.
\end{proof}

\begin{lemma}\label{l:allinS}
Let $0 < c \leq \frac25$. For any $x \in [c,1] \setminus \mathbb Q$ and any $\omega \in \{0,1\}^\mathbb N$ there is an $n \ge 0$, such that $T_\omega^n (x) \in S$.
\end{lemma}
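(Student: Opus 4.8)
The plan is to argue by contradiction, reducing the statement to a one-dimensional deterministic question on $\big[\frac12,1\big]$ and then using irrationality to exclude the only two possible trapped orbits. Suppose there exist $x\in[c,1]\setminus\mathbb Q$ and $\omega\in\{0,1\}^{\mathbb N}$ with $T_{\omega,c}^n(x)\notin S$ for all $n\ge0$. Outside $S$ the maps $T_{0,c}$ and $T_{1,c}$ coincide, so the orbit $\big(T_{\omega,c}^n(x)\big)_{n\ge0}$ is forced and does not depend on $\omega$. On each non-switch piece $[z_n,z_n^+)$ the forced branch is $T_A$ and on $(z_{n-1}^-,z_{n-1})$ it is $T_L$, and a direct computation gives $T_A\big([z_n,z_n^+)\big)=T_L\big((z_{n-1}^-,z_{n-1})\big)=(1-c,1]$. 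Hence $T_{\omega,c}^n(x)\in(1-c,1]$ for every $n\ge1$, and since $c\le\frac25<\frac12$ we have $(1-c,1]\subseteq[z_2,z_1]=\big[\frac12,1\big]$; thus from the first iterate onwards the orbit stays in this single interval.

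Inside $\big[\frac12,1\big]$ the switch region is $[z_2^+,z_1^-]$ with $z_2^+=\frac{1+c}{2}$ and $z_1^-=1-\frac c2$, so the non-switch part of $(1-c,1]$ is $R_1\cup L_1$, where $R_1=(z_1^-,1]$ and $L_1=(1-c,z_2^+)$ (with $L_1=\emptyset$ when $c\le\frac13$). On $R_1$ the forced branch is $T_L(y)=2y-1$, with $T_L(R_1)=(1-c,1]$, while on $L_1$ it is $T_A(y)=2-2y$, with $T_A(L_1)=(1-c,2c)$. The decisive point is that $2c\le z_1^-=1-\frac c2$ is equivalent to $c\le\frac25$, so under the hypothesis $T_A(L_1)\subseteq(1-c,z_1^-]=L_1\cup\big(S\cap(1-c,1]\big)$, and therefore no $S$-avoiding orbit can pass from $L_1$ into $R_1$. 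Consequently the orbit either lies in $R_1$ for all $n\ge1$, or enters $L_1$ at some first time $m$ and then remains in $L_1$ for all $n\ge m$.

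It remains to exclude both itineraries, and here irrationality does the work. In the first case $T_{\omega,c}^{n+1}(x)=2T_{\omega,c}^n(x)-1$ for all $n\ge1$, so $1-T_{\omega,c}^n(x)$ doubles at each step and cannot remain bounded by $\frac c2$ unless $T_{\omega,c}^1(x)=1$. In the second case $T_{\omega,c}^{n+1}(x)-\frac23=-2\big(T_{\omega,c}^n(x)-\frac23\big)$ for $n\ge m$, so $\big|T_{\omega,c}^n(x)-\frac23\big|$ doubles and cannot stay in the bounded interval $L_1$ unless $T_{\omega,c}^m(x)=\frac23$. Either way the orbit assumes a rational value at a finite time; but each $T_{\omega,c}^n(x)$ is an affine image of the irrational $x$ with rational coefficients, hence irrational, which is the desired contradiction.

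The heart of the argument, and the source of the threshold $\frac25$, is the inclusion $T_A(L_1)\subseteq L_1\cup S$, which is exactly what forbids the orbit from shuttling between $L_1$ and $R_1$. For $c>\frac25$ the image $(1-c,2c)$ of $L_1$ overshoots $z_1^-$ and meets $R_1$, reintroducing such shuttling and, with it, a nontrivial Cantor set of $S$-avoiding points that would contain irrationals; so I expect establishing this inclusion, rather than the final geometric blow-up estimates, to be the step requiring the most care.
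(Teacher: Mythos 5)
Your proof is correct and follows essentially the same route as the paper's: reduce to the interval $(1-c,1)$ after one step, split off the switch piece $[z_2^+,z_1^-]$, and exploit the expansion of $y\mapsto 2y-1$ on $(z_1^-,1]$ together with the repelling fixed point $\tfrac23$ of $T_A$ on $(1-c,z_2^+)$, with the threshold $\tfrac25$ arising from the inequality $2c\le z_1^-$ in both arguments. The only organizational difference is that you argue by contradiction and use irrationality to exclude the two fixed orbits (at $1$ and at $\tfrac23$), and you make explicit that an $S$-avoiding orbit cannot pass from $L_1$ back to $R_1$, whereas the paper reaches the same conclusions by a direct covering and subinterval case analysis.
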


\begin{proof}
Let $0 < c \le \frac25$, $x \in  [c,1] \setminus \mathbb Q$ and $\omega \in \{0,1\}^\mathbb N$. If $x \in S$, then we are done. If $x \not \in S$, then $T_{\omega,c}^1(x)=T_{0,c}(x) = T_{1,c}(x) \in (1-c,1)$. Assume that $c \le \frac13$, then $z_2^+=\frac{1+c}{2} \le 1-c < 1-\frac{c}{2}=z_1^-$. Let $f: x \mapsto 2x-1$ denote the right most branch of $T_L$. Since $f(z_1^-)=1-c$, it follows that $(1-c, 1) = \cup_{j \in \mathbb{N}} f^{-j} ( (1-c, z_1^-])$ and hence there exists a $j \in \mathbb{N}$ such that $T_{\omega,c}^{j+1}(x) = T_L^j(T_{\omega,c}^1(x))  \in (1-c, z_1^-] \subseteq S$.

\vskip .2cm
Now assume that $\frac13 < c \leq \frac25$. Then $\frac12 < 1-c < z_2^+$ and
\[z_2^+ < 2c= T_{0,c}(1-c)=T_{1,c}(1-c) < z_1^-.\]
We write 
\[ (1-c,1) = ( 1-c, z_2^+) \cup [z_2^+, z_1^-] \cup (z_1^-, 1), \]
and we treat the subintervals separately.

\begin{itemize}
\item[1.] For the first subinterval note that $( 1-c, z_2^+) = \big(1-c, \frac{3-c}{4}\big] \cup \big(\frac{3-c}{4}, \frac23\big) \cup \big[\frac23, z_2^+\big)$. Here $\frac23$ is a repelling fixed point of $T_A$ and this subdivision is such that $T_A \big(\big(1-c, \frac{3-c}{4}\big)\big) = [z_2^+, 2c) \subseteq S$, $T_A \big(\big(\frac{3-c}{4}, \frac23\big)\big) = \big( \frac23, z_2^+ \big)$ and $T_A \big( \big(\frac23, z_2^+ \big) \big) = \big(1-c, \frac23 \big)$. This gives the following.\\

- If $T_{\omega,c}^1(x) \in  \big(1-c, \frac{3-c}{4}\big]$, then $T_{\omega,c}^2(x) \in [z_2^+, 2c) \subseteq S$.\\

- If $T_{\omega,c}^1(x) \in  \big(\frac{3-c}{4}, \frac23\big)$, then $T_{\omega,c}^2(x) = T_A (T_{\omega,c}^1(x))$ and since $\frac23$ is a repelling fixed point for $T_A$ there must then exist a $j$ such that $T_{\omega,c}^j (x) \in \big(1-c, \frac{3-c}{4} \big)$, so $T_{\omega,c}^{j+1} (x) \in (z_2^+,2c) \subseteq S$.\\

%note that $T_A ([\frac23, z_2^+]) = [1-c, \frac{3-c}{4}] \cup [\frac{3-c}{4}, \frac23]$ and $T_A ([\frac{3-c}{4}, \frac23]) = [\frac23, z_2^+]$. It also holds that for any irrational $y_0 \in [\frac{3-c}{4}, \frac23]$, then
%\[T_A^2 (y_0) < y_1 < T_A(y),\]
%since $4y-2 < y < -2y + 2$ holds for any $y < \frac23$. Applying the same equality to $y_2 = T^2_A(y_0)$ and in general to $y_{2n}= T^{2n}_A(y_0)$, we obtain a sequence of strictly decreasing points 
%\[\{y_{2n} = (-2)^{2n} y + \sum_{k=1}^{2n} 2 \cdot (-2)^{k-1} \}_{2n},\] 
%such that $\lim_{n \rightarrow \infty} y_{2n} =- \infty$. This implies that there exists $j= j(y_0)$ such that $y_{j}= T_A^{j} (y) > \frac{3-c}{4}$ and as such $T_A^{j} (y) \in [1-c, \frac{3-c}{4}]$. If $T_\omega^1(x) \in [\frac{3-c}{4}, \frac23]$, then for $t= j(T_\omega^1(x)) + 2$, $T_\omega^t(x) \in [z_2^+, 2c] \subseteq S$.\\

- If $T_{\omega,c}^1(x) \in \big[\frac23, z_2^+\big)$, then either $T_{\omega,c}^2(x) \in \big(1-c, \frac{3-c}{4}\big]$ and $T_{\omega,c}^3(x) \in [z_2^+, 2c) \subseteq S$; or $T_{\omega,c}^2(x) \in \big(\frac{3-c}{4}, \frac23\big]$, and again we can find a suitable $j$ as above.
\item[2.] If $T_{\omega,c}^1(x) \in [z_2^+, z_1^-]$, then $T_{\omega,c}^1(x) \in S$. 
\item[3.] If $T_{\omega,c}^1(x) \in (z_1^-, 1]$, since $f(z_1^-)= 1-c$ we can write $(z^-_1,1)$ as the disjoint union
\[ (z_1^-, 1) = \cup_{j \ge 1} f^{-j} ([z_2^+,z_1^-]) \cup f^{-j} ((1-c, z_2^+)).\]
Hence, there is a $j$ such that either $T_{\omega,c}^{j+1} (x) = T_L^j \circ T_{\omega,c}^1 (x) \in S$ and we are done or $T_{\omega,c}^{j+1} (x) = T_L^j \circ T_{\omega,c}^1 (x) \in (1-c,z_2^+)$ and we are in the situation of case 1.
\end{itemize}
This finishes the proof.
\end{proof}

\begin{proof}[Proof of Theorem~\ref{t:uncountable25}]
Let $0 < c \le \frac25$ and $x \in [c,1] \setminus \mathbb Q$ be given. To prove the result it is enough to show that for any sequence $((s_n,d_n))_{n \ge 1}$ representing a $c$-L\"uroth expansion of $x$ and any $N \ge 1$ there is an $n \ge N$ and a $c$-L\"uroth expansion
\[ ((s_1,d_1), \ldots, (s_{N+n}, d_{N+n}), (s_{N+n+1}',d_{N+n+1}'), (s_{N+n+2}', d_{N+n+2}'), \ldots)\]
of $x$ with $s_{N+n+1}' \neq s_{N+n+1}$ or $d_{N+n+1}' \neq d_{N+n+1}$. 

\vskip .2cm
Let $((s_n,d_n))_{n \ge 1}$ be any $c$-L\"uroth admissible sequence with
\[ x = \sum_{n \ge 1} (-1)^{\sum_{i=1}^{n-1}s_i} \frac{d_n-1+s_n}{\prod_{i=1}^n d_i(d_i-1)}.\]
By Lemma~\ref{l:admissible} there then exists a sequence $\omega$, such that $(s_n(\omega,x),d_n(\omega,x))_n = ((s_n,d_n))_n$. Fix an $N \ge 1$. Lemma~\ref{l:allinS} yields the existence of an $n$ such that $T_{\omega,c}^{N+n}(x) \in S$. Take any sequence $\tilde \omega \in \{0,1\}^\mathbb N$ with $\tilde \omega_j = \omega_j$ for all $1 \le j \le N+n$, $\tilde \omega_{N+n+1} = 1-\omega_{N+n+1}$. Then for each $1 \le j \le N+n$,
\[ s_j(\tilde \omega,x) = s_j(\omega,x) = s_j \quad \text{and} \quad d_j(\tilde \omega,x) = d_j(\omega,x) = d_j\]
and
\[ s_{N+n+1} (\tilde \omega,x) = \tilde \omega_{N+n+1} = 1- \omega_{N+n+1} = 1- s_{N+n+1} (\omega,x).\]
From $T_{\tilde \omega,c}^{N+n+1}(x) \not \in \mathbb Q$ we obtain that either $T_{\tilde \omega,c}^{N+n+1}(x) > \frac12$ and $T_{\omega,c}^{N+n+1}(x) < \frac12$ so that $d_{N+n+2} (\tilde \omega,x) =2$ and $d_{N+n+2} (\omega,x) >2$, or $T_{\tilde \omega,c}^{N+n+1}(x) < \frac12$ and $T_{\omega,c}^{N+n+1}(x) > \frac12$ so that $d_{N+n+2} (\tilde \omega,x) >2$ and $d_{N+n+2} (\omega,x) =2$. In any case we get a $c$-L\"uroth expansion of $x$ that differs from the expansion $((s_n,d_n))_n$ at indices $N+n+1$ and $N+n+2$.
\end{proof}

The second result of this section is on universal expansions. For the remainder of this section we assume that $c = \frac1\ell$ for some $\ell \ge 3$. Then $\mathcal A_c = \{ (i,j) \, : \, i \in \{0,1\}, \, j \in \{2,3, \ldots, \ell\} \}$ and by Lemma~\ref{l:admissible} any sequence in $\mathcal A_c^\mathbb N$ not ending in $((0,d+1),(0,2)^\infty)$ is $c$-L\"uroth admissible. An expansion
\[ x = \sum_{n \ge 1} (-1)^{\sum_{i=1}^{n-1}s_i} \frac{d_n-1+s_n}{\prod_{i=1}^n d_i(d_i-1)}\]
of a number $x \in [c,1]$ is called a {\em universal} $c$-L\"uroth expansion if for any block $(t_1,b_1), \ldots, (t_j,b_j) \in \mathcal A_c$ there is a $k \ge 1$, such that $s_{k+i} = t_i$ and $d_{k+i} = b_i$ for all $1 \le i \le j$, i.e., if each finite block of digits occurs in $((s_n,d_n))_n$. 

\begin{theorem}\label{t:universal}
Let $c = \frac1\ell$ for some $\ell \in \mathbb{N}_{\ge 3}$. Then Lebesgue almost every $x \in \big[\frac1{\ell},1\big]$ has uncountably many different universal $c$-L\"uroth expansions.
\end{theorem}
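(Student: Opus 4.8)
The plan is to mimic the proof of Proposition~\ref{p:universal0}, replacing Lebesgue measure $\lambda$ by the stationary measure $\mu_{p,c}$ and using the ergodicity of $L_c$ with respect to $m_p\times\mu_{p,c}$ recorded above. For a finite block $B=((t_1,b_1),\dots,(t_j,b_j))\in\mathcal A_c^{\,j}$ I would set
\[ \Delta_B=\{(\omega,x)\in\{0,1\}^{\mathbb N}\times[c,1] \,:\, (s_i(\omega,x),d_i(\omega,x))=(t_i,b_i)\text{ for }1\le i\le j\}, \]
so that $B$ occurs at position $k$ in the expansion coded by $(\omega,x)$ precisely when $L_c^{k-1}(\omega,x)\in\Delta_B$. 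The first goal is to show $m_p\times\mu_{p,c}(\Delta_B)>0$ for every $B$.

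For this, since $c=\frac1\ell$ every block is $c$-L\"uroth admissible, so by Lemma~\ref{l:admissible} and by iterating Lemma~\ref{l:rightdigits} the set of $x$ whose expansion under a suitable $\omega$ begins with $B$ is a genuine interval $J$ of positive length. Prescribing $\omega$ on those indices $i\le j$ at which the $i$-th orbit point lies in the switch region $S$ (where the sign is dictated by $\omega$) fixes a cylinder $W\subseteq\{0,1\}^{\mathbb N}$ with $m_p(W)>0$, and $W\times J'\subseteq\Delta_B$ for a suitable subinterval $J'\subseteq J$ of positive length. Hence it suffices that $\mu_{p,c}$ charge every subinterval of $[c,1]$, i.e.\ that $\mu_{p,c}$ have full support. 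This is the crux. It follows from $\mu_{p,c}\ll\lambda$ together with the fact that $T_{0,c}$ has \emph{full} branches, namely the $T_L$-branches mapping $[z_d^+,z_{d-1})$ bijectively onto $[c,1)$: writing $P_0,P_1$ for the transfer (Frobenius--Perron) operators of $T_{0,c},T_{1,c}$, stationarity gives $f_{p,c}=pP_0f_{p,c}+(1-p)P_1f_{p,c}\ge pP_0f_{p,c}$, and the presence of uniformly expanding full branches forces $f_{p,c}$ to be bounded below by a positive constant, so $\mu_{p,c}(I)>0$ for every subinterval $I$.

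With positivity of the cylinder measures in hand, ergodicity of $L_c$ with respect to $m_p\times\mu_{p,c}$ implies that $m_p\times\mu_{p,c}$-a.e.\ $(\omega,x)$ enters $\Delta_B$, so its expansion contains $B$. As $\mathcal A_c$ is finite there are only countably many blocks $B$, whence $m_p\times\mu_{p,c}$-a.e.\ $(\omega,x)$ has a universal $c$-L\"uroth expansion. Applying Fubini to this full-measure set, and using $\mu_{p,c}\ll\lambda$ with $f_{p,c}>0$ to pass between $\mu_{p,c}$-null and $\lambda$-null sets, yields a set of $x$ of full Lebesgue measure such that for each such $x$ the set $A_x=\{\omega:\Phi_x(\omega)\text{ is universal}\}$ has $m_p(A_x)=1$, where $\Phi_x(\omega)=(s_n(\omega,x),d_n(\omega,x))_{n\ge1}$.

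It remains to turn the full-measure family $A_x$ into uncountably many \emph{distinct} universal expansions. Here I would use Lemma~\ref{l:allinS}: since $c=\frac1\ell\le\frac13\le\frac25$ for $\ell\ge3$, for every irrational $x$ and every $\omega$ there is an $n$ with $T_{\omega,c}^n(x)\in S$, and applying this repeatedly to the (still irrational) orbit points shows every $\omega$-orbit of $x$ meets $S$ infinitely often. Now fix an irrational $x$ in the good set. For a fixed expansion $E=(s_n,d_n)_n$ of $x$, each orbit point $T_{\omega,c}^n(x)$ equals the value of the corresponding tail series and is therefore determined by $E$ alone; hence so is the (infinite) set of switch-region times, and at each such time the generating $\omega$ must satisfy $\omega_n=s_n$. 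Thus every fibre $\Phi_x^{-1}(E)$ lies in a cylinder with infinitely many prescribed coordinates and is $m_p$-null, so the full-measure set $A_x$ cannot be a countable union of such fibres and $\Phi_x(A_x)$ is uncountable. Combining this with the fact that $\lambda$-a.e.\ $x$ is both irrational and in the good set completes the proof. The main obstacle is the full-support claim of the second paragraph: positivity of $\mu_{p,c}$ on all subintervals is exactly what prevents some admissible block from being invisible to almost every orbit, and it is where the full-branch geometry of $T_{0,c}$ must be exploited; the rest is measure-theoretic bookkeeping parallel to Proposition~\ref{p:universal0}.
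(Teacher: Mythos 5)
Your argument is correct, but it reaches the theorem by a genuinely different route from the paper at the crucial ``uncountably many distinct expansions'' step. The paper's difficulty is that the coding map $g$ of \eqref{q:g} for $L_c$ is far from injective, and it resolves this by introducing the auxiliary system $K_c$, which shifts $\omega$ only at visits to $S$, proving that $m_p\times\mu_{p,c}$ is invariant and ergodic for $K_c$ (Proposition~\ref{l:invergK}, via the isomorphism $h\colon Z_K\to D$), running Birkhoff for $K_c$, and then using injectivity of the $K_c$-coding to separate expansions. You instead run the Birkhoff/Fubini argument directly for $L_c$ (exactly as in Proposition~\ref{p:universal0}) and handle non-injectivity by a fibre-counting argument: for a fixed expansion $E$ of an irrational $x$ the orbit is determined by $E$ via the tail series, the set of switch times is infinite by iterating Lemma~\ref{l:allinS} (using that the orbit points stay irrational), and at each switch time the coordinate of $\omega$ is forced to equal the sign; hence each fibre $\Phi_x^{-1}(E)$ sits inside a cylinder with infinitely many prescribed coordinates and is $m_p$-null, so the full-measure set $A_x$ cannot map onto a countable set. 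This is sound and arguably more economical, since it dispenses entirely with $K_c$ and Proposition~\ref{l:invergK}; what the paper's construction buys in exchange is a bijective symbolic model $(D,\sigma,\nu)$ for the expansion process, which is of independent use.

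Two places deserve tightening. First, your reduction of $m_p\times\mu_{p,c}(\Delta_B)>0$ to full support of $\mu_{p,c}$ glosses over the fact that which indices are switch indices varies with $x$ inside $J$; one should note that the switch pattern is constant on a subinterval $J'$ of positive length (the orbit maps are affine on $J$, so the level sets of the pattern are finitely many intervals), or simply use the product decomposition as in \eqref{q:h-1union}. Second, the full-support claim itself is asserted rather than proved: the inequality $f_{p,c}\ge p^nP_0^nf_{p,c}$ does not by itself yield a uniform lower bound, since $p^n\to 0$. What you actually need is only $f_{p,c}>0$ Lebesgue-a.e.\ (to pass between $\mu_{p,c}$-null and $\lambda$-null sets), and this is exactly the content of Proposition~\ref{p:equivm}, which the paper establishes through the random covering property and \cite[Proposition 2.6]{ANV15}; you should either invoke that proposition or supply the covering argument rather than the transfer-operator one-liner.
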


The proof of this theorem requires some work and several smaller results. First we prove a property of the measure $m_p \times \mu_{p,c}$.
\begin{proposition}\label{p:equivm}
Let $c = \frac1{\ell}$ for some $\ell \in \mathbb{N}_{\ge 3}$. Then for any $0 < p < 1$ the random transformation $L_c$ is mixing and the density of $\mu_{p,c}$ is bounded away from 0.
\end{proposition}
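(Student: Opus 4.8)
The plan is to pass to the annealed transfer (Perron--Frobenius) operator and exploit a full-branch structure that appears precisely because $c=\frac1\ell$ is itself a partition point $z_\ell$. Let $\mathcal L_j$ be the transfer operator of $T_{j,c}$ with respect to $\lambda$, namely $(\mathcal L_j g)(y)=\sum_{T_{j,c}(x)=y} g(x)/\abs{T_{j,c}'(x)}$, and set $\mathcal L=p\mathcal L_0+(1-p)\mathcal L_1$. Then $f_{p,c}$ is the unique normalised fixed point of $\mathcal L$ by \cite{Pe}, and since every branch is affine with slope in $[2,\ell(\ell-1)]$, a Lasota--Yorke inequality makes $\mathcal L$ quasi-compact on $\mathrm{BV}([c,1])$ with spectral radius $1$ (see \cite{Mo}). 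The structural observation I would record first is that each $T_L$-piece of $T_{0,c}$, defined on $[z_n^+,z_{n-1})$, is an affine bijection onto $[c,1)$ (one checks $T_L(z_n^+)=c$ and $T_L(z_{n-1}^-)=1$), and likewise each $T_A$-piece of $T_{1,c}$, on $[z_n,z_{n-1}^-]$, maps bijectively onto $[c,1]$. Thus both $T_{0,c}$ and $T_{1,c}$ possess full branches onto $[c,1]$; in particular each is uniformly expanding and topologically exact, so any subinterval $I\subseteq[c,1]$ satisfies $T_{0,c}^N(I)=[c,1]$ for some $N=N(I)$.

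For the lower bound on the density I would argue as follows. Being $\mathrm{BV}$, $f_{p,c}$ is bounded, say by $V$, and normalised, so there is $a\in(0,V)$ with $G:=\{f_{p,c}>a\}$ of positive measure; since $\mathrm{BV}$ functions are regulated, $G$ contains an interval $I_0$. By topological exactness there is an $N$ with $T_{0,c}^N(I_0)=[c,1]$, i.e.\ $T_{0^N,c}(I_0)=[c,1]$, so for every $y\in[c,1]$ there is an $x\in I_0$ lying on a single length-$N$ inverse branch with $T_{0^N,c}(x)=y$. Writing $f_{p,c}=\mathcal L^N f_{p,c}$ and keeping only the contribution of the word $0^N$ and that single preimage gives $f_{p,c}(y)\ge p^N\,a\,(\ell(\ell-1))^{-N}=:\kappa>0$ for every $y$, which is the claimed lower bound (after fixing the canonical lower-semicontinuous representative of the fixed point).

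Mixing I would obtain from the same covering. The exactness argument in fact yields a uniform minorisation $(\mathcal L^N g)(y)\ge\kappa'\int g\,d\lambda$ for all $g\ge0$: iterating the full branches shows the length-$N$ inverse images of any $y$ become $\varepsilon_N$-dense with $\varepsilon_N\to0$, so that $\mathcal L^N$ satisfies a Doeblin condition. This upgrades quasi-compactness to a genuine spectral gap, forcing $1$ to be the only eigenvalue of $\mathcal L$ on the unit circle and ruling out any cyclic decomposition; hence $\mathcal L^n g\to(\int g\,d\lambda)f_{p,c}$ in $\mathrm{BV}$, i.e.\ the annealed system mixes, and transferring this to the skew product $L_c$ gives mixing with respect to $m_p\times\mu_{p,c}$.

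I expect the main obstacle to be making the covering step uniform and quantitative, i.e.\ producing a single $N$ that works for every $y$ and simultaneously delivers both the density lower bound and the Doeblin minorisation needed for the spectral gap. Controlling how the expanding images of a subinterval interact with the branch partition (and confirming that no proper cyclic union of subintervals is invariant) is where the real work lies; once a clean topological-exactness statement for $T_{0,c}$ at $c=\frac1\ell$ is in hand, both conclusions follow, with the passage from annealed mixing to skew-product mixing a routine but non-trivial bookkeeping point.
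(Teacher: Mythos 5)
The crux of your argument --- that every subinterval of $[c,1]$ is eventually mapped onto all of $[c,1]$ --- is precisely the content of the paper's proof, and you have not supplied it. The inference ``possess full branches \ldots\ in particular each is uniformly expanding and topologically exact'' does not hold as stated: both $T_{0,c}$ and $T_{1,c}$ also have non-full branches (the $T_A$-pieces of $T_{0,c}$ on $[z_n,z_n^+)$ map only onto $(1-c,1]$, and the $T_L$-pieces of $T_{1,c}$ only onto $(1-c,1)$), the slopes on the digit-$2$ intervals equal exactly $2$, and an interval straddling a partition point has a disconnected image, so the naive ``lengths double until a full branch domain is swallowed'' argument stalls. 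Establishing covering requires an actual orbit analysis, and this is what the paper does: since $c=\frac1\ell$ there are only finitely many partition points, so expansion forces some $z_n$, $z_n^+$ or $z_n^-$ into the interior of an iterate of $J$; the images of the two adjacent branches then produce an interval of the form $[c,c+\delta)$ or $(1-\varepsilon,1]$; a left neighbourhood of the fixed point $1$ is expanded by the slope-$2$ branch $x\mapsto 2x-1$ until it contains $[z_2^+,1]$, whose $T_L$-image is all of $[c,1]$. This is the random covering property, after which the paper invokes Proposition 2.6 of [ANV15] to obtain both mixing and the lower bound on the density in one stroke. You flag this step yourself as ``where the real work lies,'' which is an accurate self-assessment: as written, the proposal defers essentially the whole proof.

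Two smaller points. The parenthetical check ``$T_L(z_{n-1}^-)=1$'' is incorrect --- one computes $T_L(z_{n-1}^-)=1-c$, while it is the supremum of $T_L$ over $[z_n^+,z_{n-1})$ that equals $1$ --- although the conclusion that this branch is onto $[c,1)$ is right. And your operator-theoretic superstructure (Lasota--Yorke on $\mathrm{BV}$, Doeblin minorisation, spectral gap, then transfer to the skew product) is a legitimate alternative to the citation of [ANV15] and would even yield exponential mixing; but every one of those steps --- the Doeblin bound, the density lower bound, and the exclusion of peripheral spectrum --- is strictly downstream of the covering statement, so none of them is supported until that statement is proved.
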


\begin{proof}
We will show that $L_c$ has the {\em random covering property}, i.e., that for any non-trivial subinterval $J\subseteq [c,1]$ there is an $n \ge 1$ and an $\omega \in \{0,1\}^\mathbb N$ such that $T_\omega^n (J) = [c,1]$. The result then follows from \cite[Proposition 2.6]{ANV15}.

\vskip .2cm
Let $J \subseteq [c,1]$ be any interval of positive Lebesgue measure. Since $|\frac{\de}{\de x} T_{\omega,c}^1 (x)| \ge 2$, for any $\omega \in \{0,1\}^\mathbb N$ there is an $m$ such that at least one of the points $ z_n, z_n^+, z_n^-$ is in the interior of $T_\omega^m (J)$ and hence $T_\omega^{m+2}(J)$ will contain an interval of the form $(a,1]$ for some $a$. Since 1 is a fixed point, this implies that there is a $k$ such that $(1-\frac1{\ell}, 1] \subseteq T_\omega^k ((a,1])$ for each $\omega \in \{0,1\}^\mathbb N$. %For each $t \ge 0$ we have $T_0^t \big( 1- \frac1{\ell} \big)= 1 -\frac{2^t}{\ell}$. Hence,
For the smallest $i \ge \log_2(\ell-1)-1$ it holds that $T_{0^i,c}\big( 1- \frac1{\ell} \big) = 1 -\frac{2^i}{\ell} < \frac12 + \frac{1}{2\ell}= z_2^+$. Hence
\[ [c,1] \subseteq T_{0^{i+1},c} \Big(\Big(1-\frac1{\ell}, 1\Big]\Big),\]
giving the random covering property and the result.
\end{proof}

The difference between Theorem~\ref{t:universal} and Proposition~\ref{p:universal0} is that in case $c=0$ for almost all $(\omega,x)$ the sequence $(s_n(\omega,x))_n$ equals the sequence $\omega$, so that it is immediately clear that different sequences $\omega$ lead to different expansions. For $c>0$ typically many sequences $\omega$ lead to the same sequence $(s_n(\omega,x))_n$. In other words, the map $g: \{0,1\}^\mathbb N \times [c,1] \to \mathcal A_c^\mathbb N$ defined by 
\begin{equation}\label{q:g}
g((\omega,x)) = \big( (s_1(\omega,x), d_1(\omega,x)), (s_2(\omega,x),d_2(\omega,x)), (s_3(\omega,x),d_3(\omega,x)), \ldots)
\end{equation}
is far from injective. To solve this issue we define another random dynamical system $K_c: \{0,1\}^\mathbb N \times [c,1] \to \{0,1\}^\mathbb N \times [c,1]$ given by
\[ K_c (\omega,x) = \begin{cases}
(\omega, T_A (x)), & \text{ if } x \in \cup_{n=2}^{\ell} [ z_n, z_n^+),\\
(\omega,T_L (x)), & \text{ if } x \in   \cup_{n=2}^{\ell} ( z_{n-1}^-, z_{n-1})\cup \{1\},\\
(\sigma(\omega),T_{\omega,c}^1(x)), & \text{ if } x \in S.
\end{cases}\]
The difference between $K_c$ and $L_c$ is that the map $K_c$ only shifts in the first coordinate if the point $x$ lies in $S$, so only if $T_{0,c}(x) \neq T_{1,c}(x)$. For $K_c$ define the function $h: \{0,1\}^\mathbb N \times [c,1] \to \mathcal A_c^\mathbb N$ by 
\[ h( (\omega,x) ) = \big( ( s_1(\omega,x), d_1(\omega,x)), (s_1(K_c(\omega,x)), d_1(K_c(\omega,x))), (s_1(K_c^2(\omega,x)),d_1(K_c^2(\omega,x))), \ldots \big).\]
In the other direction set $\psi_c = \psi|_{\mathcal A_c^\mathbb N}$, so that
\[  \psi_c : \mathcal A_c^\mathbb N \to [c,1], \, ((s_n,d_n))_{n \ge 1} \mapsto  \sum_{n \ge 1} (-1)^{\sum_{i=1}^{n-1}s_i} \frac{d_n-1+s_n}{\prod_{i=1}^n d_i(d_i-1)}.\]
Neither $g$ nor $h$ is surjective, since both maps $L_c$ and $K_c$ do not produce sequences ending in $((0,d+1),(0,2)^\infty)$ as we saw before. To solve this and also make $h$ injective, define 
\[\begin{split}
Z_L = \ & \{(\omega, x) \in \{0,1\}^{\mathbb{N}} \times [c,1]: \, L_c^n(\omega, x) \in \{0,1\}^{\mathbb{N}} \times S \, \text{ for infinitely many } n \ge 0\},\\
Z_K=\ & \{(\omega, x) \in \{0,1\}^{\mathbb{N}} \times [c,1]: \, K_c^n(\omega, x) \in \{0,1\}^{\mathbb{N}} \times S \, \text{ for infinitely many } n \ge 0\},\\
D =\ & \{ \mathbf a \in \mathcal A_c^\mathbb N \, : \, \psi_c(\sigma^n \mathbf a) \in S \, \text{ for infinitely many } n\ge 0\}.
\end{split}\]
Then $g: Z_L \to D$ is surjective and $h:Z_K \to D$ is bijective and by Lemma~\ref{l:allinS} $m_p \times \mu_{p,c}(Z_L)=1$. 
%Neither $g$ nor $h$ is injective, since for example
%To see that both $g$ and $h$ are neither surjective nor injective consider the following examples. For any $c$, the sequences $\mathbf{a}_1=((0,2), (0,3), (0,2)^{\infty})$ and $\mathbf{a}_2=((0,2), (1,2), (0,2)^{\infty})$ are such that 
%\[\psi_c (\mathbf{a}_1)=\psi_c (\mathbf{a}_2) =\frac34.\]
%However, there is no $\omega \in \{0,1\}^{\mathbb{N}}$ such that $g (\omega, \frac34) =\mathbf{a}_2$ nor $h (\omega, \frac34) =\mathbf{a}_2$. This is due to the fact that $T_{j,c}\big(\frac34\big) = \frac12$ for any $j=0,1$ and by construction $\frac12$ lies in the partition element $\big(\frac13, \frac12\big]$ but $\frac12 \notin (\frac12,1]$. This implies that $d_1(\omega, \frac12) = 3$ for any $\omega$. Furthermore, note also that 
%\[g\Big( \Big(01^{\infty}, \frac34\Big) \Big)= g\Big( \Big(0^{\infty}, \frac34\Big) \Big)=h \Big( \Big(01^{\infty}, \frac34\Big) \Big)= h\Big( \Big(0^{\infty}, \frac34\Big) \Big)= ((0,2), (1,2), (0,2)^{\infty}),\]
%
%If we set 
%\[Z= \{(\omega, x) \in \{0,1\}^{\mathbb{N}} \times [c,1]: \, K_c^n(\omega, x) \in \{0,1\}^{\mathbb{N}} \times S \, \text{ for infinitely many } n \ge 0\},\]
%and 
%\[ D = \{ \mathbf a \in \mathcal A_c^\mathbb N \, : \, \psi_c(\sigma^n \mathbf a) \in S \, \text{ for infinitely many } n\ge 0\},\]
%then $h:Z \to D$ is bijective.
The proof of Theorem~\ref{t:universal} is based on the proofs of \cite[Theorem 7 and Lemma 4]{DaVr2} and uses the following result on the map $K_c$.

\begin{proposition}\label{l:invergK}
Let $c = \frac1{\ell}$ for some $\ell \in \mathbb{N}_{\ge 3}$ and $0<p<1$. The measure $m_p \times \mu_{p,c}$ is invariant and ergodic for $K_c$.
\end{proposition}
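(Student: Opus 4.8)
Here is how I would organise the argument.

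The plan is to establish invariance by a direct computation on a generating $\pi$-system and then to deduce ergodicity by transporting the already-known ergodicity of $L_c$ through the symbolic coding maps $g$ and $h$. For invariance it suffices to check that $(m_p\times\mu_{p,c})(K_c^{-1}([w]\times B)) = (m_p\times\mu_{p,c})([w]\times B)$ for every cylinder $[w]$ with $w\in\{0,1\}^k$ and every Borel set $B\subseteq[c,1]$, since these products form a $\pi$-system generating the product $\sigma$-algebra. I would split the preimage according to whether the second coordinate lies in $S$ or in $S^c:=[c,1]\setminus S$. On $S^c$ the maps $T_{0,c}$ and $T_{1,c}$ coincide and $K_c$ leaves $\omega$ unchanged, so this part contributes $m_p([w])\,\mu_{p,c}(S^c\cap T_{0,c}^{-1}B)$. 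On $S$ the map $K_c$ shifts $\omega$ and applies $T_{\omega_1,c}$; requiring $\sigma\omega\in[w]$ leaves $\omega_1$ free, and splitting on $\omega_1\in\{0,1\}$ yields the factor $p$ or $1-p$ and contributes $m_p([w])\big(p\,\mu_{p,c}(S\cap T_{0,c}^{-1}B)+(1-p)\,\mu_{p,c}(S\cap T_{1,c}^{-1}B)\big)$. Writing $\mu_{p,c}(S^c\cap T_{0,c}^{-1}B)=p\,\mu_{p,c}(S^c\cap T_{0,c}^{-1}B)+(1-p)\,\mu_{p,c}(S^c\cap T_{1,c}^{-1}B)$, which is legitimate because the two branches agree on $S^c$, and adding the two contributions, the stationarity relation \eqref{q:pfmeasure} collapses the sum to $m_p([w])\,\mu_{p,c}(B)=(m_p\times\mu_{p,c})([w]\times B)$.

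For ergodicity I would pass to the symbolic system. Both coding maps intertwine the dynamics with the shift, $g\circ L_c=\sigma\circ g$ on $Z_L$ and $h\circ K_c=\sigma\circ h$ on $Z_K$. Since $m_p\times\mu_{p,c}$ is $L_c$-invariant and $g$ is a measurable factor onto $D$, the measure $\nu:=g_*(m_p\times\mu_{p,c})$ is $\sigma$-invariant, and because $L_c$ is ergodic the system $(D,\sigma,\nu)$ is ergodic (the $g$-preimage of a $\sigma$-invariant set is $L_c$-invariant). The crucial step is to show $h_*(m_p\times\mu_{p,c})=\nu$ as well. For this I would observe that under both $L_c$ and $K_c$ the $x$-coordinate evolves as the same Markov chain, with transition kernel $P(x,\cdot)=p\,\delta_{T_{0,c}(x)}+(1-p)\,\delta_{T_{1,c}(x)}$, for which $\mu_{p,c}$ is stationary precisely by \eqref{q:pfmeasure}. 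Indeed $L_c$ consumes a symbol of $\omega$ at every step while $K_c$ consumes one only at visits to $S$, but off $S$ the two branches coincide, so in both cases $x_{n+1}=T_{0,c}(x_n)=T_{1,c}(x_n)$ on $S^c$ and, on $S$, equals $T_{0,c}(x_n)$ or $T_{1,c}(x_n)$ with probabilities $p,1-p$. Since $d_n$ is a function of $x_{n-1}$ and, away from the $\mu_{p,c}$-null set of points of $S$ where the two branches agree, the sign $s_n$ is recovered from the pair $(x_{n-1},x_n)$, the whole digit sequence is almost everywhere a single fixed measurable function of the trajectory $(x_n)_{n\ge 0}$. As this trajectory has the same law under both systems, $g_*(m_p\times\mu_{p,c})=h_*(m_p\times\mu_{p,c})=\nu$.

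Finally, $h:Z_K\to D$ is a measurable bijection with $h_*(m_p\times\mu_{p,c})=\nu$ and $h\circ K_c=\sigma\circ h$, hence a measure-theoretic isomorphism of $(Z_K,K_c,m_p\times\mu_{p,c})$ with the ergodic system $(D,\sigma,\nu)$, provided $(m_p\times\mu_{p,c})(Z_K)=1$. This last fact I would deduce from Lemma~\ref{l:allinS}: for irrational $x$ the $K_c$-orbit reaches $S$ in finitely many steps (off $S$ the $K_c$-dynamics is the deterministic common branch, so the lemma applies verbatim, using $c=\frac1\ell\le\frac13\le\frac25$), the affine branches preserve irrationality, and iterating shows $S$ is visited infinitely often; since $\mu_{p,c}\ll\lambda$ the irrationals carry full measure. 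Ergodicity of $K_c$ then transfers through the isomorphism $h$. The main obstacle is exactly the identity $g_*(m_p\times\mu_{p,c})=h_*(m_p\times\mu_{p,c})$: the two codings draw on the randomness $\omega$ at different rates, and the delicate point is to verify that the symbols consumed by $K_c$ at successive visits to $S$ are independent and identically distributed Bernoulli$(p)$ variables, independent of the past trajectory, so that the two induced $x$-chains genuinely coincide in law.
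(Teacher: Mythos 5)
Your proposal is correct and reaches the same conclusion as the paper, but the decisive step is handled by a genuinely different argument. The paper also factors through the codings: it sets $\nu = (m_p\times\mu_{p,c})\circ g^{-1}$, transports invariance and ergodicity of $\sigma$ on $(D,\nu)$ back through the bijection $h\colon Z_K\to D$, and then establishes $\tilde\nu = m_p\times\mu_{p,c}$ by an explicit combinatorial computation: it decomposes $h^{-1}$ of a cylinder $[(k_1,i_1),\ldots,(k_n,i_n)]$ into a disjoint union of product sets $[k_{t_1},\ldots,k_{t_m}]\times I_{j_1i_1\ldots j_ni_n}$, indexed by which coordinates meet the switch region, and matches this term by term with the corresponding decomposition of the $g$-preimage, in which the $\omega$-symbols at non-switch times are summed out. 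You instead (i) prove invariance directly on the $\pi$-system of sets $[w]\times B$ using the stationarity relation \eqref{q:pfmeasure} --- a clean computation the paper does not carry out separately, and which is correct --- and (ii) identify $g_*(m_p\times\mu_{p,c})$ with $h_*(m_p\times\mu_{p,c})$ by arguing that the second coordinate evolves as the same Markov chain under both dynamics and that, off a null set, the digit sequence is a fixed function of consecutive trajectory points. Your route is shorter and more conceptual; what it buys is a transparent reason why the two codings induce the same symbolic measure, whereas the paper's cylinder computation is exactly the bookkeeping that verifies the point you yourself flag as \emph{delicate}, namely that the symbols $K_c$ consumes at successive visits to $S$ are i.i.d.\ Bernoulli$(p)$ and independent of the past of the $x$-trajectory. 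In a final write-up you should close that step (the event that exactly $k$ symbols have been consumed by time $n$ is measurable with respect to $\sigma(x_0,\omega_1,\ldots,\omega_k)$, so $\omega_{k+1}$ is independent of $\sigma(x_0,\ldots,x_n)$ on it), and record explicitly that $(m_p\times\mu_{p,c})(Z_K)=1$, which you correctly reduce to Lemma~\ref{l:allinS} (applicable since $c=\tfrac1\ell\le\tfrac13$), the preservation of irrationality by the affine branches, and $\mu_{p,c}\ll\lambda$.
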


\begin{proof}
First note that $\sigma \circ g = g \circ L_c$. Define a measure $\nu$ on $\mathcal A^\mathbb N_c$ with the $\sigma$-algebra generated by the cylinders by setting $\nu = m_p \times \mu_{p,c} \circ g^{-1}$. Then by Lemma~\ref{l:allinS} we get $\nu(\mathcal A_c^\mathbb N \setminus D)=0$, that is $\nu$ is concentrated on $D$. So $g$ is a factor map and $\sigma$ is ergodic and measure preserving with respect to $\nu$. By construction it holds that $K_c^{-1}(Z_K) = Z_K$ and $\sigma^{-1}(D)=D$ and moreover, $\sigma \circ h = h \circ K_c$. Define a measure $\tilde \nu$ on $\{0,1\}^\mathbb N \times [c,1]$ by setting
\[ \tilde \nu(A) = \nu \big( h(A \cap Z_K)).\]
Since $h$ is a bijection from $Z_K$ to $D$ we find that $h: \{0,1\}^\mathbb N \times [c,1] \to \mathcal A_c^\mathbb N$ is an isomorphism and $K_c$ is measure preserving and ergodic with respect to $\tilde \nu$. What is left is to prove that $\tilde \nu= m_p \times \mu_{p,c}$, which is what we do now.\\

\vskip .2cm
%By definition of $\tilde \nu$,
%\[\tilde \nu (A)= \nu (h (A \cap Z_K))=m_p \times \mu_{p,c} \circ g^{-1} (h (A \cap Z_K)),\]
%so that to show $\tilde \nu= m_p \times \mu_{p,c}$ it is enough to show that $m_p \times \mu_{p,c}$ assigns the same values to the sets $g^{-1} (h (A \cap Z_K))$ and $A$ for arbitrary measurable set $A$. %The cylinders $[(k_1, i_1), \ldots, (k_n, i_n)] \subseteq \mathcal A_c^\mathbb N$ generate the product $\sigma$-algebra on $\mathcal A_c^\mathbb N$ and $h$ is bi-measurable with respect to the product $\sigma$-algebra on $\{0,1\}^\mathbb N \times [c,1]$ of the product $\sigma$-algebra on $\{0,1\}^\mathbb N$ and the Borel $\sigma$-algebra on $[c,1]$. So the sets
Sets of the form
\[ h^{-1} ([(k_1, i_1), \ldots, (k_n, i_n)] ) \]
generate the product $\sigma$-algebra on $\{0,1\}^\mathbb N \times [c,1]$ given by the product $\sigma$-algebra on $\{0,1\}^\mathbb N$ and the Borel $\sigma$-algebra on $[c,1]$. Therefore it is enough to check that
\[ \tilde \nu (h^{-1} ([(k_1, i_1), \ldots, (k_n, i_n)] )) = m_p \times \mu_{p,c} (h^{-1} ([(k_1, i_1), \ldots, (k_n, i_n)] ))\]
for any cylinder $[(k_1, i_1), \ldots, (k_n, i_n)] \subseteq \mathcal A_c^\mathbb N$. 

\vskip .2cm
For $i \in \{2,3,\ldots,\ell\}$, let
\begin{equation}\nonumber
\begin{split}
A_{0i} &= [0] \times \big[ z_i^+, z_{i-1}^-  \big], \qquad A_{22} = \{0,1\}^{\mathbb{N}}  \times \big( z_1^-, 1  \big] \, \, \text{ and } \, \, A_{2i} = \{0,1\}^{\mathbb{N}}  \times \big( z_{i-1}^-, z_{i-1}  \big), \, i\ge3 ,\\
A_{1i} &= [1] \times \big[ z_i^+, z_{i-1}^-  \big], \qquad A_{3i} = \{0,1\}^{\mathbb{N}} \times \big[ z_i, z_{i}^+  \big).\\
\end{split}
\end{equation}
For any cylinder $[(k,i)] \subseteq \mathcal A_c^\mathbb N$ we get $h^{-1}([(k,i)])= A_{k i} \cup A_{(k+2) i}$ and
\begin{equation} \label{q:h-1union}
h^{-1} ([(k_1,i_1),(k_1,i_2),\ldots ,(k_n,i_n)]) = \bigcup_{j_1, \ldots, j_n} A_{j_1 i_1} \cap K_c^{-1}(A_{j_2 i_2}) \cap \cdots \cap K_c^{-(n-1)}(A_{j_n i_n}),
\end{equation}
where the union is disjoint and is taken over all blocks $j_1, \ldots, j_n$ that have $j_t \in \{k_t, k_{t}+2\} \subset \{0,1,2,3\}$ for each $t$. Any set $A_{j_1 i_1} \cap K_c^{-1}A_{j_2 i_2} \cap \cdots \cap K_c^{-(n-1)}(A_{j_n i_n})$ is a product set. Denote its projection on the second coordinate by $I_{j_1 i_1 \ldots j_n i_n}$. Define the set
\[\{t_1, \ldots, t_m\} = \{t : (j_t, i_t) \in \mathcal{A}_c\},\]
where we assume that $1 \le t_1 < t_2 < \cdots < t_m \le n$. These are the indices $t$ such that $j_t=k_t$ and thus the projection of $A_{j_ti_t}$ to the first coordinate does not equal $\{0,1\}^\mathbb N$. This implies that we can write 
\[ A_{j_1 i_1} \cap K_c^{-1} (A_{j_2 i_2} )\cap \cdots \cap K_c^{-(n-1)}(A_{j_n i_n}) = [k_{t_1},k_{t_2}, \ldots ,k_{t_m}] \times I_{j_1 i_1 \ldots j_n i_n}\]
and
\[ m_p \times \mu_{p,c} (h^{-1} ([(k_1,i_1),(k_1,i_2),\ldots ,(k_n,i_n)])) = \sum_{j_1, \ldots, j_n} m_p \times \mu_{p,c} ([k_{t_1},k_{t_2} ,\ldots ,k_{t_m}] \times I_{j_1 i_1 \ldots j_n i_n}).\]

\vskip .2cm
To compute $\tilde \nu (h^{-1} ([(k_1,i_1),(k_1,i_2),\ldots ,(k_n,i_n)]))$, let $\mathcal S \subseteq \{0,1\}^n$ denote the set of blocks $s_1, s_2, \ldots , s_n $ for which $s_t = k_t$ for all $t \in \{ t_1, \ldots, t_m\}$. Then
\[\begin{split}
 \tilde \nu (h^{-1} ([(k_1,i_1),(k_1,i_2), & \ldots ,(k_n,i_n)]))\\
 =\ & \nu (h (Z_K \cap h^{-1} ([(k_1,i_1),(k_1,i_2),\ldots ,(k_n,i_n)])))\\
=\ & \nu (D \cap [(k_1,i_1),(k_1,i_2),\ldots ,(k_n,i_n)])\\
=\ & \nu ([(k_1,i_1),(k_1,i_2),\ldots ,(k_n,i_n)])\\
=\ & m_p \times \mu_{p,c} \bigg( \bigcup_{j_1, \ldots, j_n} \bigcup_{s_1,s_2, \ldots, s_n \in \mathcal S} [s_1 , s_2, \ldots, s_n] \times I_{j_1 i_1 \ldots j_n i_n} \bigg)\\
 =\ & \sum_{j_1, \ldots, j_n} \sum_{s_1,s_2, \ldots, s_n \in \mathcal S}m_p \times \mu_{p,c} ([s_1 , s_2, \ldots, s_n] \times I_{j_1 i_1 \ldots j_n i_n})\\
 = \ & \sum_{j_1, \ldots, j_n} m_p \times \mu_{p,c} ([k_{t_1},k_{t_2}, \ldots ,k_{t_m}] \times I_{j_1 i_1 \ldots j_n i_n}). 
 \end{split}\]
Hence, $\tilde \nu =  m_p \times \mu_{p,c}$ and the statement follows.
\end{proof}

\begin{proof}[Proof of Theorem~\ref{t:universal}]
Let $0<p<1$ be given. For each $s \in \{0,1\}$ and $d \in \{2,3, \ldots, \ell\}$ define
%\[ \Delta (s,d) = \begin{cases}
%[0] \times \big[ \frac1d + \frac{c}{d(d-1)}, \frac1{d-1}-\frac{c}{d(d-1)}\big] \cup \{0,1\}^\mathbb N \times \big( \frac1{d-1}-\frac{c}{d(d-1)} , \frac1{d-1} \big), & \text{ if } s=0,\\
%[1] \times \big[ \frac1d + \frac{c}{d(d-1)}, \frac1{d-1}-\frac{c}{d(d-1)}\big] \cup \{0,1\}^\mathbb N \times \big[ \frac1d , \frac1{d} + \frac{c}{d(d-1)} \big) , & \text{ if } s=1, \, d\ge 3,\\
%[1] \times \big[ \frac1d + \frac{c}{d(d-1)}, \frac1{d-1}-\frac{c}{d(d-1)}\big] \cup \{0,1\}^\mathbb N \times \big( \big[ \frac1d , \frac1{d} + \frac{c}{d(d-1)} \big) \cup \{1\} \big)  , & \text{ if } s=1, \, d=2.
%\end{cases}\]
\[ \hat \Delta (s,d) = \begin{cases}
[0] \times [z_d^+, z_{d-1}^-] \cup \{0,1\}^\mathbb N \times (z_{d-1}^-, z_{d-1} ], &  \text{ if } s=0, \, d=2,\\
[0] \times [z_d^+, z_{d-1}^-] \cup \{0,1\}^\mathbb N \times (z_{d-1}^-, z_{d-1} ), &  \text{ if } s=0, \, d \ge 3\\
[1] \times [z_d^+, z_{d-1}^1] \cup \{0,1\}^\mathbb N \times \big[ z_d , z_d^+ \big) , & \text{ if } s=1.
\end{cases}\]

Fix $(s_1,d_1), \ldots, (s_j,d_j) \in \mathcal A_c$. Then the set
\[ E=\hat \Delta(s_1,d_1) \cap K_c^{-1}\hat \Delta(s_2,d_2) \cap \cdots \cap K_c^{-(j-1)} \hat \Delta (s_j,d_j) \subseteq \{0,1\}^\mathbb N \times [c,1]\]
contains precisely those points $(\omega,x)$ for which $s_i(\omega,x)=s_i$ and $d_i(\omega,x)=d_i$ for all $1 \le i\le j$. Since for each $s,d$, $m_p \times \lambda (\hat \Delta(s,d)) = \frac{c}{d(d-1)} + (p-s)(-1)^s \frac{1-2c}{d(d-1)}>0$ and $K_c (\hat \Delta(s,d)) = \{0,1\}^\mathbb N \times [c,1]$, it also follows that $m_p \times \lambda (E) >0$. The map $K_c$ is ergodic with respect to $m_p \times \mu_{p,c}$ by Lemma \ref{l:invergK} and the measures $\mu_{p,c}$ and $\lambda$ are equivalent by Proposition~\ref{p:equivm}. By Birkhoff's Ergodic Theorem it then follows that for $m_p \times \lambda$-a.e.~$(\omega,x)$ the block $(s_1,d_1), \ldots, (s_j,d_j)$ occurs with positive frequency in the sequence $(s_1(K_c^n(\omega,x)),d_1(K_c^n(\omega,x)))_n$. Since there are only countably many blocks $(s_1,d_1), \ldots, (s_j,d_j)$ it follows that the $c$-L\"uroth expansion of $m_p\times \lambda$-a.e.~$(\omega,x)$ is universal. Let 
\[\tilde{Z} :=\{x \in [c,1]\,:\,  \forall \omega \in \{0,1\}^{\mathbb{N}} \, K_c^n(\omega,x) \in \{0,1\}^{\mathbb{N}} \times S \text{ for infinitely many } n \geq 0 \}.\]
Then $\lambda ([c,1] \setminus \tilde{Z}) =0$ by Lemma~\ref{l:allinS}. From Fubini's Theorem we get the existence of a set $B \subseteq \tilde{Z}$ with $\lambda(B)=1-c$ and for each $x \in B$ a set $A_x \subseteq \{0,1\}^\mathbb N$ with $m_p(A_x)=1$ and such that for any $(\omega,x) \in A_x \times \{x\}$ the sequence $((s_1 (K_c^n(\omega,x)) ,d_1 (K_c^n(\omega,x))))_n$ is universal. Since the set $A_x$ has full measure, it contains uncountably many sequences. For any $x \in \tilde Z$ different sequences $\omega$ define different sequences $((s_1(K_c^n(\omega,x))  , d_1(K_c^n(\omega,x)) ))_{n \geq 1}$. Hence, we obtain for Lebesgue almost every $x$ uncountably many universal $c$-L\"uroth expansions.
\end{proof}

Theorem~\ref{t:main3} is now given by Theorem~\ref{t:uncountable25} and Theorem~\ref{t:universal}.

\section{More examples}
Explicit expressions for the probability density functions $f_{p,c} = \frac{\de \mu_{p,c}}{\de \lambda}$ can be obtained from the procedure from \cite[Theorem 4.1]{KM} in case $p\neq \frac12$ (since otherwise condition (A5) is violated). From this result it follows that
\begin{equation}\label{q:denskopf}
f_{p,c} = c_1 + c_2 \sum_{t \ge 0} \sum_{\omega \in \{0,1\}^t} \frac{p_\omega}{T_{\omega,c}'(1-c)} 1_{[c, T_{\omega,c}(1-c))} + c_3 \sum_{t \ge 1} \sum_{\omega \in \{0,1\}^t} \frac{p_\omega}{T_{\omega,c}'(c)} 1_{[c, T_{\omega,c}(c))},
\end{equation}
where $c_1,c_2,c_3$ are constants and $p_\omega$ is an abbreviation for the product $p_{\omega_1} \cdots p_{\omega_t}$. The sums in this expression have finitely many terms if the random orbits $T_{\omega,c} (1-c)$ and $T_{\omega,c}(c)$ take values in a finite set. This happens for example if the  
%For $c>0$ it is far from trivial to find an expression for the density of the stationary measure $\mu_{p,c}$. However, \cite[Theorem 4.1]{KM} applies to any random map $L_{c,p}$ and can be used to obtain such expressions. In particular, the algebraic procedure of \cite[Theorem 4.1]{KM} is quite straightforward when applied to
random map $L_{c,p}$ is Markov, which is the case for any $c \in \mathbb{Q} \cap (0, \frac12] $ as the next Proposition shows. 

\begin{proposition}\label{p:markov}
For any $c \in \mathbb{Q} \cap (0, \frac12] $ and any $0  < p < 1$ the random $c$-L\"uroth transformation $L_{c,p}$ is Markov.
\end{proposition}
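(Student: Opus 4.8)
The plan is to exhibit a finite interval partition $\mathcal P^{*}$ of $[c,1]$ that is simultaneously Markov for both branch maps $T_{0,c}$ and $T_{1,c}$: namely a partition into finitely many subintervals on each of which both $T_{0,c}$ and $T_{1,c}$ are affine (hence monotone onto their image), and such that the image of each partition element under each $T_{j,c}$ is a union of partition elements. Since $c>0$, the admissible digits are bounded by $\lceil \tfrac1c\rceil$, so each of $T_{0,c}$, $T_{1,c}$ has only finitely many branches, whose breakpoints lie in the finite set $\mathcal E$ consisting of $c$, $1$, the points $z_n=\tfrac1n$, and the switch-region endpoints $z_n^{+},z_{n-1}^{-}$ from \eqref{q:zn}, for $2\le n\le \lceil\tfrac1c\rceil$. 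The partition generated by $\mathcal E$ already refines both branch partitions, so the only remaining task is to enlarge $\mathcal E$ to a finite, forward-invariant set of endpoints.

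First I would record the crucial arithmetic feature of the branches. On each branch the maps act as $x\mapsto n(n-1)x-(n-1)$ (a copy of $T_L$) or $x\mapsto -n(n-1)x+n$ (a copy of $T_A$); in both cases the slope $\pm n(n-1)$ and the intercept are integers, and this holds also for the partial branches that map into $[1-c,1]$. Writing $c=\tfrac PQ$ in lowest terms, every point of $\mathcal E$ is rational, so there is a common denominator $D\in\mathbb N$ with $\mathcal E\subseteq \tfrac1D\mathbb Z$. Because an affine map with integer slope and integer intercept sends $\tfrac aD$ to $\tfrac{(\text{integer})\,a+(\text{integer})\,D}{D}$, it maps $\tfrac1D\mathbb Z$ into itself. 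Consequently the forward orbit $\mathcal O=\bigcup_{t\ge0}\bigcup_{\omega\in\{0,1\}^{t}}T_{\omega,c}^{t}(\mathcal E)$ never leaves $\tfrac1D\mathbb Z$, and being contained in $[c,1]$ it lies in the finite set $\tfrac1D\mathbb Z\cap[c,1]=\{\tfrac kD:\ Dc\le k\le D\}$. This is exactly the Diophantine mechanism already used for Lemma~\ref{p:returningdense}: integer-coefficient affine branches cannot increase denominators, so rational orbits are trapped in a finite grid. Hence $\mathcal O$ is finite, and by construction it contains $\mathcal E$ and is invariant under every branch of $T_{0,c}$ and $T_{1,c}$.

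Finally I would take $\mathcal P^{*}$ to be the partition of $[c,1]$ into the finitely many closed subintervals whose endpoints are the consecutive points of $\mathcal O$. Since $\mathcal E\subseteq\mathcal O$, each $T_{j,c}$ is affine on every element of $\mathcal P^{*}$; since $\mathcal O$ is forward-invariant, each $T_{j,c}$ sends the endpoints of a partition element to points of $\mathcal O$, so the image of each element is an interval with endpoints in $\mathcal O$, i.e.\ a union of elements of $\mathcal P^{*}$. This is precisely the Markov condition for $L_{c,p}$, and it holds for every $p\in(0,1)$ because $\mathcal P^{*}$ is independent of $p$. The only point requiring care is the bookkeeping of the partial branches (those mapping into $[1-c,1]$): one must confirm they too are restrictions of integer-coefficient maps $x\mapsto \pm n(n-1)x+(\text{integer})$ and hence respect the grid $\tfrac1D\mathbb Z$; granting this, the argument is routine. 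As noted around \eqref{q:denskopf}, the finiteness of $\mathcal O$ is exactly what makes the sums defining $f_{p,c}$ terminate.
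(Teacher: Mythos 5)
Your proof is correct and follows essentially the same route as the paper: both identify the finite rational set of critical points, reduce the problem to showing its forward orbit under the integer-coefficient affine branches is finite, and obtain that finiteness from the same Diophantine mechanism as Lemma~\ref{p:returningdense} (denominators cannot grow), yielding a finite forward-invariant endpoint set and hence a Markov partition. The only cosmetic difference is that the paper first observes that images of the critical intervals have endpoints among $c$, $1-c$, $1$ and $T_{j,c}(c)$, so it only needs to track the orbits of $c$ and $1-c$, whereas you track the orbit of the whole breakpoint set directly.
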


\begin{proof}
%Recall from \eqref{q:zn} that $z_n = \frac{1}{n}$, 
%\[z_n^+ = z_n + c z_n z_{n-1}  \quad \text{ and } \quad z_{n}^- = z_{n} - c z_n z_{n+1}.\]
Let $\mathcal{S}_c= \{s_i\}_i$ be the finite set of points given by 
\[\{c,1\} \cup \{ z_n, z_n^+, z_{n-1}^-\}_{n\ge 2} \cap [c,1],\] 
such that $s_0 = c < s_1 < \ldots < s_k= 1$. These are the critical points in $[c,1]$ of $L_{c,p}$. For $j=0,1$, 
\[T_{j,c}(s_i, s_{i+1}) \in \{(1-c, 1), (c, 1-c), (T_{j,c}(c), 1), (c, T_{j,c}(c)), (T_{j,c}(c),1-c), (1-c, T_{j,c}(c))\},\] 
so that, to determine a Markov partition, it is enough to study the orbit of $c$ and $1-c$. Since $c\in \mathbb{Q} $, Proposition \ref{p:returningdense} implies that the set 
\begin{equation}\label{q:orbitc}
\mathcal{O}_c = \{T_{\omega,c}^n(c) \, : \, \omega \in \Omega^{\mathbb{N}}, \, n \in \mathbb{N} \} \cup \{T_{\omega,c}^n(1-c) \, : \, \omega \in \Omega^{\mathbb{N}}, \, n \in \mathbb{N} \},
\end{equation}
is finite. By construction, the partition obtained by the points in $\mathcal{S}_c \cup \mathcal{O}_c$ is Markov. 
\end{proof}

\begin{remark}
From the previous proposition it follows that for each rational $c \in \big(0, \frac12 \big)$ the density $f_{p,c}$ is given by a finite sum of weighted indicator functions. In that case the weights can be explicitly computed by solving the homogeneous matrix equation from the procedure from \cite{KM}. With a proof very similar to the one for \cite[Theorem 4.1]{DK} it can be shown that for $c \neq z_n, z_n^+, z_n^-$ the densities $f_{p,c_k} \to f_{p,c}$ in $L^1(\lambda)$ if $c_k \to c$, $c_k, c \in (0, \frac12)$. If $c$ equals one of $ z_n, z_n^+, z_{n-1}^-$ for some $n \ge 3$, then this statement still holds for a sequence $(c_k)_k$ converging to $c$ from the right. Hence, for irrational $c \in (0, \frac12)$ one can approximate $f_{p,c}$ with expressions as in \eqref{q:denskopf}.
\end{remark}

From Proposition~\ref{p:markov} we can immediately determine a Markov partition for $L_{c,p}$ in case $c= \frac{1}{2^k}$. 

\begin{corollary}\label{c:partition2k}
For any integer $k \geq 1$, the Markov partition of $L_c$ for $c=\frac{1}{2^k}$ is given by
\[\mathcal{S}_{\frac{1}{2^k}} \ \cup \ \bigcup_{i=2}^{k} \bigg\{1- \frac{1}{2^i} \bigg\} .\] 
\end{corollary}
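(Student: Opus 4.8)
The plan is to invoke Proposition~\ref{p:markov}, which tells us that a Markov partition for $L_c$ is given by the points of $\mathcal{S}_c \cup \mathcal{O}_c$, where $\mathcal{O}_c$ is the finite orbit of $c$ and $1-c$ from \eqref{q:orbitc}. Since $\mathcal{S}_{1/2^k}$ already contains every critical point $z_n = \frac1n$ lying in $[c,1]$, together with all relevant $z_n^+$ and $z_{n-1}^-$, it suffices to compute $\mathcal{O}_c$ explicitly for $c=\frac1{2^k}$ and to verify that the only orbit points not already in $\mathcal{S}_{1/2^k}$ are exactly the numbers $1-\frac1{2^i}$ for $2 \le i \le k$. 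I would first dispose of the orbit of $c$: since $c = z_{2^k}$ and $T_A(z_n) = 1 - T_L(z_n) = 1$ for every $n \ge 2$, both branches give $T_{0,c}(c) = T_{1,c}(c) = T_A(c) = 1$, and as $1$ is a common fixed point, the orbit of $c$ is $\{c,1\} \subseteq \mathcal{S}_{1/2^k}$ and contributes nothing new.

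The heart of the argument is the orbit of $1-c$. Writing $a_i = 1 - \frac1{2^i}$ and $b_i = \frac1{2^i}$, so that $1-c = a_k$, I would recall that on $\big[\frac12, 1\big)$ the only active branches are $T_L(x) = 2x-1$ and $T_A(x) = 2-2x$. A direct inequality shows that $z_2^+ \le a_i \le z_1^-$, hence $a_i \in S$, for all $2 \le i \le k$; since then $a_i \in [z_2^+, z_1) \cap [z_2, z_1^-]$, this yields the recursion
\[ T_{0,c}(a_i) = 2a_i - 1 = a_{i-1} \qquad \text{and} \qquad T_{1,c}(a_i) = 2 - 2a_i = b_{i-1}. \]
Iterating the $T_{0,c}$-branch produces the chain $a_k \to a_{k-1} \to \cdots \to a_1 = \tfrac12$, while each $T_{1,c}$-branch peels off a point $b_{i-1} = z_{2^{i-1}}$. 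Every such $b_j$ (for $1 \le j \le k-1$), as well as $a_1 = \frac12 = z_2$, is of the form $z_n$ with $2 \le n \le 2^{k-1}$ and therefore maps straight to the fixed point $1$ and already lies in $\mathcal{S}_{1/2^k}$. This gives
\[ \mathcal{O}_c = \{1\} \cup \{a_i : 1 \le i \le k\} \cup \{b_j : 1 \le j \le k-1\}. \]

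Finally I would isolate the genuinely new points. All the $b_j$, together with $a_1 = \frac12$, $c$ and $1$, lie in $\mathcal{S}_{1/2^k}$, so the only candidates for new partition points are $a_2, \ldots, a_k$. The step I expect to be the main obstacle is the elementary but unavoidable check that none of these coincides with an element of $\mathcal{S}_{1/2^k}$: for $2 \le i \le k$ the number $a_i = \frac{2^i-1}{2^i}$ lies in $\big[\frac12, 1\big)$, whose only critical points are $z_2 = \frac12$, $z_2^+ = \frac12 + \frac1{2^{k+1}}$, $z_1^- = 1 - \frac1{2^{k+1}}$ and $z_1 = 1$; a short divisibility/parity argument (the equation $a_i = z_1^-$ forces $i = k+1$, while $a_i = z_2^+$ reduces to $2^k = 2^{k+1-i}+1$, impossible for $2 \le i \le k$) rules out each coincidence. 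Hence $\mathcal{S}_{1/2^k} \cup \mathcal{O}_c = \mathcal{S}_{1/2^k} \cup \bigcup_{i=2}^k \big\{1 - \frac1{2^i}\big\}$, which is the asserted Markov partition; the case $k=1$ is immediate since then $1-c = c$ and the union is empty.
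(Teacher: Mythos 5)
Your argument is correct and follows essentially the same route as the paper: both invoke Proposition~\ref{p:markov}, observe that $1-\frac1{2^i}$ lies strictly between $z_2^+=\frac12+\frac1{2^{k+1}}$ and $z_1^-=1-\frac1{2^{k+1}}$ for $2\le i\le k$, and track the two branches $T_L\colon a_i\mapsto a_{i-1}$ and $T_A\colon a_i\mapsto \frac1{2^{i-1}}=z_{2^{i-1}}$, the latter landing in $\mathcal S_{1/2^k}$ and then at the fixed point $1$. Your version is slightly more explicit than the paper's (you also dispose of the orbit of $c$ itself and verify that the points $1-\frac1{2^i}$ do not already occur in $\mathcal S_{1/2^k}$), but these are checks the paper leaves implicit rather than a different method.
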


\begin{proof}
First, note that for any integer $\ell >3 $, the map $T_{1, \frac1\ell}$ is Markov, and its Markov partition can be given by
\[ \mathcal{S}_{\frac1\ell} \ \cup \ \bigg\{ 1-\frac1\ell, \frac2\ell \bigg\}. \]
Indeed, since $\frac12 < 1-\frac1\ell < 1-\frac1{2\ell}$, then $T_{1, \frac1\ell} (1-\frac1\ell)= \frac2\ell$. If $\ell$ is even, the point $z_{\frac{\ell}{2}} = \frac2\ell$ is already in $\mathcal{S}_{\frac1\ell}$, otherwise $\frac{2}{\ell+1} < \frac2\ell < \frac{2}{\ell-1}$ and moreover $\frac2\ell  < z_{\frac{\ell-1}{2}}^- = \frac{2}{\ell-1} - \frac2{\ell-1}\frac2{\ell+1} \frac1\ell$. Thus, $T_{1, \frac1\ell} \big(\frac2\ell\big)= \frac12 + \frac1{2\ell} \in \mathcal{S}_{\frac1\ell}$.\\

We now follow the random orbit of $1-c=1-\frac{1}{2^k}$. Since
\[z_2^+=\frac{1}{2}+\frac{1}{2^{k+1}} < 1-\frac{1}{2^i} < 1- \frac{1}{2^{k+1}}=z_1^-,\]
for $i= 2, 3, \ldots, k$ then 
\[\mathcal{O}_{\frac{1}{2^k}} = \{T_{\omega_1^n}(1-c) \, : \, \omega \in \Omega^{\mathbb{N}}, n \in \mathbb{N}\} = \bigcup_{i=2}^{k} \bigg\{1-\frac{1}{2^i}, \frac{1}{2^{i-1}},1\bigg\}. \qedhere\]
\end{proof} 

In the following, we give examples for $k=2$ and $k=3$.

\begin{example}
For $k=2$ a Markov partition of $L_{\frac14}$ is given by the points
\[\bigg\{\frac{1}{4}, \frac{13}{48}, \frac{11}{36}, \frac{1}{3}, \frac{3}{8}, \frac{11}{24}, \frac{1}{2}, \frac{5}{8}, \frac{3}{4}, \frac{7}{8}, 1 \bigg\}.\]
The application of \cite[Theorem 4.1]{KM} yields the following expression for the normalised density function
\begin{equation}\label{density2}
f_{p,\frac14}(x)= \begin{cases}
\dfrac{4}{2p+3} & \mbox{if  } x \in \displaystyle \bigg[\frac14, \frac12\bigg),\\[10pt]
\dfrac{4p+2}{2p+3} & \mbox{if  } x \in \displaystyle \bigg[\frac12, \frac34\bigg),\\[10pt]
2 & \mbox{if  } x \in \displaystyle \bigg[\frac34, 1\bigg].
\end{cases}
\end{equation}
The explicit formula for the unique invariant density allows to say more on the digit frequency and the Lyapunov exponent. Recall from Example \ref{e:L13more} that the frequency of a digit $d \in \{2,3,4\}$ is given by
\[\pi_d=  \lim_{n \to \infty} \frac{ \# \{ 1 \leq j \leq n: \,  d_j(\omega, x)=d\}}{n} = \pi_{(0,d)}+ \pi_{(1,d)}.\]
It then follows by Birkhoff's Ergodic Theorem that, for $m_p \times \mu_{p,\frac14 }$-a.e. point $(\omega, x)$,
\[\pi_2= \frac{2p+2}{2p+3}, \quad \pi_3= \frac{2}{3(2p+3)}, \quad \pi_4= \frac{1}{3(2p+3)}.\]
Recall the definition of $\Lambda$ from \eqref{d:Lyap}. We obtain 
\[\Lambda_{m_p \times \mu_{p,\frac14}} = \frac{p \log 64 + \log 27648}{6p+9} < \Lambda_{m_p \times \lambda}.\]
That is, for $m_p \times \mu_{ p,\frac14}$-a.e. point, the approximants $\frac{p_n}{q_n}$ obtained by the iteration of the random $\frac14$-L\"uroth map are in general worse than the corresponding ones obtained via the random $0$-L\"uroth map with countably many branches. 
\end{example}

\begin{example}
For $k=3$ the $L_{\frac18}$-invariant probability density of the measure $\mu_{p,\frac18}$ is
\begin{equation}\nonumber
f_{p,\frac18}(x)= \begin{cases}
\dfrac{8}{2p^2+3p+5} & \text{if  } x \in \displaystyle \bigg[\frac18, \frac14\bigg), \\[10pt]
\dfrac{4p+4}{2p^2+3p+5} & \text{if  } x \in \displaystyle \bigg[\frac14, \frac12\bigg),\\[10pt]
\dfrac{4p^2+2p+4}{2p^2+3p+5} & \text{if  } x \in \displaystyle \bigg[\frac12, \frac34\bigg),\\[10pt]
\dfrac{4p^2+6p+4}{2p^2+3p+5} & \text{if  } x \in \displaystyle \bigg[\frac34, \frac78\bigg),\\[10pt]
\dfrac{4p^2+6p+12}{2p^2+3p+5} & \text{if  } x \in \displaystyle \bigg[\frac78,1\bigg].
\end{cases}
\end{equation}
The frequency of the digits $d \in \{2,3, \ldots, 8\}$ is given by 
\[\begin{array}{lll}
\pi_2  = \frac{2p^2+2p+3}{2p^2+3p+5}, & \pi_3= \frac{2(p+1)}{3(2p^2+3p+5)}, &\pi_4= \frac{p+1}{3(2p^2+3p+5)}, \\[5pt]
\pi_5  = \frac{2}{5(2p^2+3p+5)} ,& \pi_6= \frac{4}{15(2p^2+3p+5)}, & \pi_7= \frac{2}{21(2p^2+3p+5)}, \\[5pt]
\pi_8  = \frac{1}{7(2p^2+3p+5)}. & &
\end{array}\]
Moreover for $m_p \times \mu_{p,\frac18}$-a.e. $(\omega, x)$ we have by \eqref{d:specificlyap} that 
\begin{equation}\nonumber
\begin{split}
\Lambda(\omega, x) & = \sum_{d=2}^{8} \log (d(d-1)) \mu_{p,\frac18} \bigg( \bigg[ \frac{1}{d}, \frac{1}{d-1} \bigg)\bigg) \\
& = \dfrac{8}{2p^2+3p+5} \bigg( \frac{ \log 56}{56} + \frac{ \log 42}{42} + \frac{ \log 30}{30} + \frac{ \log 20}{20} \bigg) \\
& \quad  + \dfrac{4p+4}{2p^2+3p+5} \bigg(\frac{ \log 12}{12} + \frac{ \log 6}{6}  \bigg) + \dfrac{2p^2+2p+3}{2p^2+3p+5} \log 2 \\
& \cong \dfrac{1.38628 p^2 +  3.40908 p+ 7.49448}{2p^2+3p+5} < \frac32< \Lambda_{m_p \times \lambda}.
\end{split}
\end{equation}
\end{example}

\bibliographystyle{alpha}
\bibliography{random}

\end{document}